\let\Oldsection\section
\renewcommand{\section}{\FloatBarrier\Oldsection}
\let\Oldsubsection\subsection
\renewcommand{\subsection}{\FloatBarrier\Oldsubsection}
\let\Oldsubsubsection\subsubsection
\renewcommand{\subsubsection}{\FloatBarrier\Oldsubsubsection}
\newcommand{\shortdot}[1]{\raisebox{-0.4pt}{$\stackrel{\bullet}{#1}$}}
\theoremstyle{plain}
\newtheorem{theorem}{Theorem}[section]
\newtheorem{lemma}[theorem]{Lemma}
\theoremstyle{definition}
\theoremstyle{remark}
\begin{document}
\title{Multi-Delay Differential Equations: A Taylor Expansion Approach}
\author{ Philip Doldo \\ Center for Applied Mathematics \\ Cornell University
\\ 657 Rhodes Hall, Ithaca, NY 14853 \\  pmd93@cornell.edu  \\ 
\and
  Jamol Pender \\ School of Operations Research and Information Engineering \\ Center for Applied Mathematics \\ Cornell University
\\ 228 Rhodes Hall, Ithaca, NY 14853 \\  jjp274@cornell.edu 
 }    

\maketitle

\begin{abstract} It is already well-understood that many delay differential equations with only a single constant delay exhibit a change in stability according to the value of the delay in relation to a critical delay value. Finding a formula for the critical delay is important to understanding the dynamics of delayed systems and is often simple to obtain when the system only has a single constant delay. However, if we consider a system with multiple constant delays, there is no known way to obtain such a formula that determines for what values of the delays a change in stability occurs. In this paper, we present some single-delay approximations to a multi-delay system obtained via a Taylor expansion as well as formulas for their critical delays which are used to approximate where the change in stability occurs in the multi-delay system. We determine when our approximations perform well and we give extra analytical and numerical attention to the two-delay and three-delay settings.

\end{abstract}


\section{Introduction} \label{sec_intro}


Delay differential equations (DDEs) are useful for modeling phenomena that arise in areas such as biology, machine learning, neural networks, physics, and surprisingly queueing theory.  See for example \cite{cooke1996analysis, wei1999stability, freedman1986stability, campbell2007time, wu2011introduction, novitzky2019nonlinear, novitzky2020queues, penkovsky2019coupled, rackauckas2020universal}.  When trying to understand the dynamics of a DDE with only a single constant delay, it is often crucial to find the critical delay at which solutions to the system exhibit a change in stability when treating the delay as a bifurcation parameter. In many cases, deriving an expression for the critical delay in a single-delay system and one space dimension is quite easy and provides valuable information about the behavior of the system. However, DDEs with multiple constant delays are significantly more difficult to analyze than DDEs with only a single constant delay due to the presence of additional exponential terms in the characteristic equation. This complicates the standard procedure typically used for finding the critical delay in single-delay systems and consequently there are far fewer analytical results for multi-delay systems than there are for single-delay systems. When the space is also multi-dimensional, this only complicates the problem even further.  

Despite the complexity to obtain analytic results for multi-delay differential equations, some analytic results do exist for certain classes of DDE systems that have two or three delays. For example, \cite{nussbaum1978differential} proves the existence of periodic solutions for a specific class of DDEs with two delays. Some stability results for DDEs with two delays can be found in \cite{wei1999stability, freedman1986stability, ruan2003zeros, belair1994stability, campbell1999qualitative, shayer2000stability, campbell2006multistability, guo2008two}. In \cite{wei1999stability}, a neural network model with two delays is examined by analyzing its transcendental characteristic equation and by applying normal form theory and the center manifold theorem. In \cite{campbell1999qualitative}, a neural network model where each neuron receives two time delayed inputs is considered and sufficient conditions for stability of an equilibrium are determined by analyzing a system of transcendental equations obtained from the characteristic equation. In \cite{freedman1986stability}, a system of two DDEs, each having two delays, is considered and the Nyquist criterion is applied to the characteristic equation to obtain estimates on the values of delays for which the system's stability is preserved, in addition to other stability criteria. In \cite{ruan2003zeros}, results on the zeros of general transcendental functions are obtained and applied to exponential polynomials and DDEs with two delays.  Additional results for multi-delay systems can be found in \cite{lucken2015classification, llibre2006periodic, yoneyama1988stability} and references therein. In \cite{bernard2001sufficient}, DDEs with distributed delays are considered and analyzed using Laplace transforms.  Regardless of existing results, there is no known general formula for the critical values of the delays in multi-delay systems that determine where the system changes stability and thus a better understanding of the stability properties of multi-delay DDEs is desired.  Our work differs from the existing literature as we focus on using single-delay differential equations, which already have well-understood stability properties, to approximate multi-delay differential equations to better understand the stability of systems with multiple delays.  

Although there are many application areas for DDEs, our work is motivated by multi-delay DDE models from queueing theory.  In recent years, there has been a great interest in studying queues with delayed information, see for example \cite{novitzky2019nonlinear, pender2017queues, novitzky2020limiting, novitzky2020queues, doldo2020breaking, liu2012many, atar2020heavy}.  Queues with delayed information is an important area as they describe how information is lagged in information-technology driven systems.  In some empirical research, it has been observed that delayed information can cause oscillations in healthcare, amusement parks, transportation systems, and even produce prices \cite{tao2017stochastic, dong2019impact, nirenberg2018impact, MACKEY1989497}.


In this paper, we aim to contribute to the understanding of where the change in stability occurs in multi-delay systems by introducing single-delay approximations to a multi-delay system. The main idea is that we can easily compute the critical delays of our single-delay approximations due to their comparatively simple characteristic equations and then use these critical delays to approximate where the multi-delay system undergoes a change in stability. We will primarily focus our attention on two single-delay approximations, one of which is a first-order neutral DDE and one which also contains a delayed second-derivative term. We analyze each of these approximations and numerically examine their performance in both a two-delay system and three-delay system.

 \subsection{Main Contributions of Paper}

The contributions of this work can be summarized as follows:    
\begin{itemize}
\item We develop a neutral DDE with only a single constant delay and derive its critical delay in an attempt to approximate where the change in stability occurs in the multi-delay system.
\item We also present a DDE with a single constant delay that has a delayed second-derivative term and we derive its critical delay and use it to approximate where the change in stability occurs in the multi-delay system.
\item We analyze the performance of these approximations in a two-delay system.
\end{itemize} 


\subsection{Organization of Paper}

The remainder of this paper is organized as follows.   In Section \ref{sec_UD} we discuss an application of multi-delay systems in queueing theory that motivates the analysis in this paper. In Section \ref{Section3} we present two single-delay approximations to a multi-delay system (in addition to discussing the constant-delay approximation), compute their critical delays, and examine their performance numerically in the two-delay and three-delay cases. In Section \ref{section_2_delay} we analyze under which conditions our single-delay approximations perform the best in the two-delay setting. In Section \ref{Section5} we provide several numerical examples comparing our approximations of where the change in stability occurs to where it actually occurs in the multi-delay system as well as a brief comparison of the stability of two-delay systems and three-delay systems. In Section \ref{lambertwsection}, we briefly discuss how our single-delay approximations relate to generalizations of the Lambert W function. Finally, in Section \ref{conclusion}, we give some closing thoughts and discuss potential directions for future research.



\section{Motivating Application}  \label{sec_UD}

In many queueing systems, customers are often supplied with information about queue lengths or waiting times. The information that a customer is provided with can influence the customer's decision of whether or not to join a queue and wait for a service. However, the information that customers are given is often inherently delayed in some sense. For example, waiting times or queue length information is often not updated in real time, so the information that the customer is receiving is actually information about the state of the queueing system in the past. Another way that this information can be delayed is if a customer has to commit to joining a queue before being able to physically travel to join the queue. Indeed, physically traveling to join a queue takes time, so by the time the customer actually arrives at the queue, the information that informed the customer's decision to join the queue is information from the past and the queue length or waiting time could be different when the customer arrives.

As one might expect, the delay introduced from traveling will depend on how far the customer had to physically travel to arrive at the queue. In many situations, not all customers are going to be traveling to the queue from the same location. For example, some service might be available in only a single city and thus this service could receive customers that traveled from multiple nearby cities, each city having a different distance from the destination. Thus, if some percentage of customers must travel from a given location, we can assume that they all experience the same delay. Keeping this in mind, in this section we will introduce a stochastic queueing model where customers travel from location $k$ with probability $p_k$ and experience constant delay $\Delta_k$, as illustrated in Figure \ref{m_locations}. A stochastic queueing model that models customer choice to depend on the queue lengths with a single constant delay has already been introduced in \cite{pender2020stochastic}. The model that we will present is a multi-delay generalization of this stochastic queueing model. Indeed, we consider a system of $N$ infinite-server queues where the choice model used to determine which queue a customer joins is based on a Multinomial Logit Model (MNL) so that the probability of joining the $i^{\text{th}}$ queue is given by the following expression 

\begin{eqnarray}
p_i( Q(t), \Delta ) &=& \frac{\exp(- \theta \left( \sum^{m}_{k=1} p_k Q_i(t-\Delta_k) \right) )}{\sum^{N}_{j=1} \exp(- \theta \left( \sum^{m}_{k=1} p_k Q_j(t-\Delta_k) \right))} 
\end{eqnarray}
where $Q(t) = ( Q_1(t), Q_2(t), ... , Q_N(t))$ and $\Delta = ( \Delta_1, \Delta_2, ... , \Delta_m)$.  


Using these probabilities for joining each queue allows us to construct the following stochastic model for the queue length process of our N dimensional system for $t \geq 0$
\begin{eqnarray}
Q_i(t) &=& Q_i(0) +  \Pi^a_{i} \left( \int^{t}_{0} \frac{\lambda \cdot \exp(- \theta \left( \sum^{m}_{k=1} p_k Q_i(s-\Delta_k) \right) )}{\sum^{N}_{j=1} \exp(- \theta \left( \sum^{m}_{k=1} p_k Q_j(s-\Delta_k) \right) ) } ds \right) \nonumber \\
&&-  \Pi^d_{i} \left( \int^{t}_{0} \mu Q_i(s) ds \right) \label{cdsqnoeta}
\end{eqnarray}
 where each $\Pi(\cdot)$ is a unit rate Poisson process and  $Q_i(s) = \varphi_i(s)$ for all $s \in [- \max_{1 \leq k \leq m}{\Delta_k},0]$.  In this model, for the $i^{th}$ queue, we have that 
  \begin{equation}
 \Pi^a_{i} \left( \int^{t}_{0} \frac{\lambda \cdot \exp(- \theta \left( \sum^{m}_{k=1} p_k Q_i(s-\Delta_k) \right) )}{\sum^{N}_{j=1} \exp(- \theta \left( \sum^{m}_{k=1} p_k Q_j(s-\Delta_k) \right)  ) } ds \right)
 \end{equation}
  counts the number of customers that decide to join the $i^{th}$ queue in the time interval $(0,t]$.  Note that the rate depends on the queue length at times $t - \Delta_k, k=1, ..., m$ and not time $t$, hence representing the lags in information corresponding to each of the $m$ locations that a customer can potentially travel to the queue from.  Similarly, the randomly time changed Poisson process 
  
  \begin{eqnarray}
  \Pi^d_{i} \left( \int^{t}_{0} \mu Q_i(s) ds \right)
  \end{eqnarray}
  
\noindent counts the number of customers that depart the $i^{th}$ queue having received service from an agent or server in the time interval $(0,t]$. However, in contrast to the arrival process, the service process depends on the current queue length and not the past queue length.  

\begin{figure}[!hb]
\centering
\includegraphics[scale=0.9]{./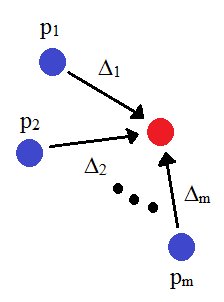}
\caption{Customers travel from one of $m$ locations (represented by \textcolor{blue}{blue} circles) to arrive at the queue (represented by a \textcolor{red}{red} circle) and experience a different constant delay in information depending on this location. Sampling over customers from all locations, we assume that a given customer travels from location $k$ with probability $p_k$ and this customer experiences constant delay $\Delta_k$.}
\label{m_locations}
\end{figure}
 
 \subsection{Fluid Limit  Scaling and Convergence}
 In many service systems, the arrival rate of customers is high.  For example in Disneyland there are thousands of customers moving around the park and deciding on which ride they should join.  Motivated by the large number of customers, we introduce the following scaled queue length process by a parameter $\eta$ 
 \begin{eqnarray}
Q^{\eta}_i(t) &=& Q^{\eta}_i(0) +  \frac{1}{\eta}\Pi^a_{i} \left( \eta \int^{t}_{0} \frac{  \lambda \cdot \exp(- \theta \left( \sum^{m}_{k=1} p_k Q^{\eta}_i(s-\Delta_k) \right))}{ \sum^{N}_{j=1} \exp(- \theta \left( \sum^{m}_{k=1} p_k Q^{\eta}_j(s-\Delta_k) \right) ) } ds \right) \nonumber \\
&&-  \frac{1}{\eta}\Pi^d_{i} \left(\eta \int^{t}_{0} \mu Q^{\eta}_i(s) ds \right). \label{cdsqeta}
\end{eqnarray}

Note that we scale the rates of both Poisson processes, which is different from the many server scaling, which would only scale the arrival rate.  Scaling only the arrival rate would yield a different limit than the one analyzed by \cite{pender2018analysis} since the multinomial logit function is not a homogeneous function.  Letting the scaling parameter $\eta$ go to infinity gives us our first result.

\begin{theorem}\label{fluidlimit}
If $Q_i^{\eta}(s) \to \varphi_i(s)$ almost surely for all $s \in [- \max_k{\Delta_k}, 0]$ and for all $1 \leq i \leq N$, then the sequence of stochastic processes $\{ Q^{\eta}(t) = (Q^{\eta}_1(t),Q^{\eta}_2(t), ..., Q^{\eta}_N(t)  \}_{\eta \in \mathbb{N}}$ converges almost surely and uniformly on compact sets of time to $(q(t) = (q_1(t),q_2(t), ... , q_N(t))$ where
\begin{eqnarray}
\shortdot{q}_i(t) &=& \lambda \cdot \frac{\exp(- \theta \left( \sum^{m}_{k=1} p_k q_i(t-\Delta_k) \right)  )}{\sum^{N}_{j=1} \exp(- \theta \left( \sum^{m}_{k=1} p_k q_j(t-\Delta_k) \right) ) } - \mu q_i(t) \label{ddecd1} 
\end{eqnarray}
and $q_i(s) = \varphi_i(s)$ for all $s \in [- \max_{k} (\Delta_k) ,0]$ and for all  $1 \leq i \leq N$.

\begin{proof}
See Appendix.

\end{proof}
\end{theorem}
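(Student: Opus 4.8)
The plan is to prove Theorem~\ref{fluidlimit} by the usual random-time-change and martingale route for density-dependent jump processes, adapted to accommodate the delays. The representation \eqref{cdsqeta} already exhibits $Q_i^\eta$ as a deterministic drift plus scaled Poisson noise, so the strategy is: (i) kill the noise via the functional strong law of large numbers (FSLLN) for Poisson processes, $\tfrac1\eta\Pi(\eta u)\to u$ almost surely and uniformly for $u$ in compact sets; (ii) show the drift is globally Lipschitz in the state; and (iii) close the loop with a Gronwall estimate carried out pathwise on the almost sure event where both the Poisson FSLLN and the hypothesized convergence $Q_i^\eta(s)\to\varphi_i(s)$ on the history interval hold. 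One could alternatively invoke a general density-dependent convergence theorem, but the self-contained route is short here.

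First I would record an a priori bound. Each multinomial-logit weight $p_i(Q^\eta(s),\Delta)$ lies in $[0,1]$ (the denominator dominates the $i$th summand), so the arrival rate in \eqref{cdsqeta} is at most $\lambda$; together with nonnegativity of the queue lengths this gives $0\le Q_i^\eta(t)\le Q_i^\eta(0)+\tfrac1\eta\Pi_i^a(\eta\lambda t)$, which by the FSLLN is almost surely bounded, uniformly in $\eta$, on any compact $[0,T]$. Hence on the event of interest there is a deterministic $R=R(T)$ with $\sup_\eta\sup_{t\le T}\|Q^\eta(t)\|\le R$, and the same bound holds for the candidate limit $q$ on $[-\max_k\Delta_k,T]$; this confines all rate functions to a fixed compact set.

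Next I would verify the Lipschitz property. Writing $u_i(s)=\sum_{k=1}^m p_k Q_i^\eta(s-\Delta_k)$, the arrival rate is $\lambda$ times the softmax $\exp(-\theta u_i)/\sum_j\exp(-\theta u_j)$, whose Jacobian in $u$ is bounded (entrywise by a constant multiple of $\theta$), while $u$ is an affine function of the queue-length history with $\sum_k p_k=1$, and the departure rate $\mu Q_i^\eta(s)$ is linear. Thus there is an explicit constant $L$ such that the drift at time $s$ differs between two trajectories by at most $L$ times the uniform distance between their histories up to time $s$. I would then subtract the integrated form of \eqref{ddecd1} from \eqref{cdsqeta}, set $f(t)=\sup_{1\le i\le N}\sup_{0\le u\le t}\abparen{Q_i^\eta(u)-q_i(u)}$, and bound the difference by $\varepsilon^\eta+L\int_0^t\paren{f(s)+b^\eta}\,ds$, where $\varepsilon^\eta:=\sup_i\sup_{u\le\lambda T}\abparen{\tfrac1\eta\Pi_i^a(\eta u)-u}+\sup_i\sup_{u\le\mu R T}\abparen{\tfrac1\eta\Pi_i^d(\eta u)-u}\to0$ a.s. collects the Poisson fluctuations and $b^\eta:=\sup_i\sup_{s\in[-\max_k\Delta_k,0]}\abparen{Q_i^\eta(s)-\varphi_i(s)}\to0$ a.s. by hypothesis handles delayed arguments landing in the history interval; for $s-\Delta_k\ge0$ one uses that $f$ is nondecreasing so $f(s-\Delta_k)\le f(s)$. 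Since then $f(t)\le(\varepsilon^\eta+LTb^\eta)+L\int_0^t f(s)\,ds$, Gronwall's inequality yields $f(T)\le(\varepsilon^\eta+LTb^\eta)e^{LT}\to0$ almost surely, which is the claimed uniform-on-compacts almost sure convergence.

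I expect the main obstacle to be organizing the Gronwall loop cleanly in the presence of the delays rather than any single estimate: the drift at time $s$ depends on the trajectory at the earlier — and possibly negative — times $s-\Delta_k$, so one must feed $b^\eta$ in as an inhomogeneous term and exploit monotonicity of $f$. An alternative is a method-of-steps argument decoupling on successive intervals of length $\min_k\Delta_k$ (on $[0,\min_k\Delta_k]$ the system is an ODE driven purely by the known history), but the global-Lipschitz version above sidesteps that bookkeeping. A secondary item worth stating for completeness is existence, uniqueness, and continuity of the solution to the limiting delay system \eqref{ddecd1}, which follows from the same global Lipschitz bound via the method of steps (or Picard iteration on $C([-\max_k\Delta_k,T];\mathbb{R}^N)$) and is what justifies referring to ``the'' limit $q$ in the statement.
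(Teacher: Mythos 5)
Your proposal is correct and follows the same skeleton as the paper's Appendix proof: write the scaled process in integral form, separate the Poisson fluctuation from the drift, use Lipschitz continuity of the multinomial-logit and departure rates, split the supremum over the history window $[-\max_k\Delta_k, s]$ into the part on $[0,s]$ and the part on the initial interval (handled by the hypothesis $Q_i^\eta(s)\to\varphi_i(s)$), and close with Gronwall. The one genuine difference is how the noise is killed. The paper invokes Kurtz's strong approximation (its Lemma on coupling a Poisson process with a Brownian motion, so that $\Pi(\eta t)/\eta = t + W(\eta t)/\eta + O(\log\eta/\eta)$) and then uses Brownian scaling and continuity of $W$ at the origin to show the $\eta^{-1/2}W(\cdot)$ terms vanish uniformly on compacts. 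You instead apply the functional strong law of large numbers for Poisson processes directly and absorb the fluctuation into a single term $\varepsilon^\eta\to 0$. Your route is more elementary --- no Brownian coupling or $O(\log\eta/\eta)$ bookkeeping is needed for an almost-sure fluid limit --- and your explicit a priori bound $\sup_\eta\sup_{t\le T}\lVert Q^\eta(t)\rVert\le R$ is stated more carefully than the paper's corresponding bound on $\int_0^t\mu Q_i^\eta(s)\,ds$; the strong-approximation machinery would only pay off if one also wanted a rate of convergence or a diffusion refinement, which the theorem does not claim. Your closing remark on existence, uniqueness, and continuity of the solution to the limiting delay system is a completeness point the paper leaves implicit.
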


This result states that as we let $\eta$ go towards infinity, the sequence of queueing processes converges to a system of \textbf{delay differential equations}.


\subsection{Fluid Limit}
 
As discussed earlier, an interesting extension of the queueing model from previous work is to make the customer choice model depend on delayed information which has multiple constant delays. A physical motivation for the following model is that customers maybe be traveling to the queueing system from multiple different locations which have different travel times to the queueing system. We suppose that customers are in location $k$ with probability $p_k$ and experience delay $\Delta_k > 0$, for $k=1,..., m$.

\begin{align}
\overset{\bullet}{q}_i(t) &= \lambda \frac{\exp \left( - \theta \left[ \sum^m_{k=1} p_k q_i(t - \Delta_k)   \right]  \right)  }{ \sum_{j=1}^N \exp \left( - \theta \left[  \sum^m_{k=1} p_k q_j(t - \Delta_k)   \right]  \right)} - \mu q_i(t), \hspace{5mm} i = 1, ..., N 
\end{align}

   
\noindent It is easy to check that this system has an equilibrium at $q_1 = \cdots  = q_N = \frac{\lambda}{N \mu}$. We can linearize this system about this equilibrium point by introducing the variables $$q_i(t) = \frac{\lambda}{N \mu} + u_i(t), \hspace{5mm} i = 1, ..., N.$$ After making the change of variables, the $i$-th equation is in the form 

\begin{align}
\overset{\bullet}{u}_i(t) &=  \lambda \frac{\exp \left( - \theta \left[ \sum^m_{k=1} p_k u_i(t - \Delta_k)   \right]  \right)  }{ \sum_{j=1}^N \exp \left( - \theta \left[  \sum^m_{k=1} p_k u_j(t - \Delta_k)   \right]  \right)} - \mu u_i(t) - \frac{\lambda}{N}
\end{align}

\noindent and linearizing this system gives us

\begin{align}
\overset{\bullet}{u}_i(t) = \frac{\lambda \theta}{N^2} \sum_{j=1}^{N} \sum_{k=1}^{m} u_j(t - \Delta_k) - \frac{\lambda \theta}{N} \sum_{k=1}^{m} u_i(t - \Delta_k) - \mu u_i(t).
\end{align}




\noindent As done similarly in \citet{novitzky2020limiting}, we can uncouple the delayed part of this system to get the following $N$ equations.

\begin{align}
\overset{\bullet}{v}_1(t) + \mu v_1(t) &= 0\\
\overset{\bullet}{v}_2(t) + \frac{\lambda \theta}{N} \sum_{k=1}^{m} p_k v_2(t - \Delta_k) + \mu v_2(t) &= 0 \label{v2_equation}\\
&\vdots \nonumber \\
\overset{\bullet}{v}_N(t) + \frac{\lambda \theta}{N} \sum_{k=1}^{m} p_k v_N(t - \Delta_k) + \mu v_N(t) &= 0
\end{align}


\noindent We notice that the first of these equations is simply an ODE which we can solve to get that $v_1(t) = \tilde{c} \exp(-\mu t)$ for some undetermined constant $\tilde{c}$, so the analysis of the system reduces to solving the remaining $N-1$ equations. The remaining equations are all in the same form, which is a DDE with multiple delays and we dedicate the next section to analyzing equations in this form.

\section{Single-Delay Approximations}
\label{Section3}

In the previous section, we saw that we could reduce the analysis of the multi-delay system of $N$ queues to the analysis of just a single DDE with multiple constant delays in the form 
\begin{eqnarray}
\overset{\bullet}{u}(t) + \frac{\lambda \theta}{N} \sum_{k=1}^{m} p_k u(t - \Delta_k) + \mu u(t) = 0.
\label{equation313}
\end{eqnarray} However, analyzing even just a single DDE with multiple constant delays is challenging in general. Because of this difficulty, in the following sections we will explore ways of approximating multi-delay DDEs with DDEs that are easier to analyze.

To further motivate the importance of analyzing DDEs with multiple constant delays, it is worth noting that the multi-delay DDEs that we are considering can be viewed as DDEs that only depend on a single discrete random variable delay $\Delta$ where $\Delta = \Delta_k$ with probability $p_k$ for $k = 1, ..., m$, so we could rewrite Equation \ref{equation313} as 
\begin{eqnarray}
\overset{\bullet}{u}(t) + \frac{\lambda \theta}{N} \mathbb{E}[ u(t - \Delta)] + \mu u(t) = 0.
\end{eqnarray}
\noindent Naturally, such DDEs can be used to approximate the behavior of DDEs that depend on a single random variable delay $\Delta$ that is distributed according to some \emph{continuous} probability distribution as we can use a discrete distribution to approximate a continuous distribution. For example, if $\Delta$ were distributed according to a continuous uniform distribution over the interval $[a,b]$, then we would have that 
\begin{eqnarray}
\mathbb{E}[u(t - \Delta)] = \frac{1}{b - a} \int_{a}^{b} u(t - s) \text{d}s
\end{eqnarray}
\noindent and we could approximate this expectation with a finite number of constant delays as follows 
\begin{eqnarray}
\mathbb{E}[u(t - \Delta)] \approx \frac{1}{m} \sum_{k = 1}^{m}  u(t - \Delta_k)
\end{eqnarray}
where $\Delta_k = a + \frac{(b-a)}{m} k$, for $k = 1, ..., m$. Thus, a better understanding of DDEs with multiple constant delays can help us to better understand a variety of problems.


\subsection{Constant-Delay Approximation}


Now we consider a DDE with multiple, $m \geq 2$, constant delays in the form 
\begin{eqnarray} 
\overset{\bullet}{u}(t) = \alpha_0 u(t) + \sum^{m}_{j=1} \alpha_j u(t - \Delta_j) 
\label{eqn_multi_alphas}
\end{eqnarray}

\noindent where $\alpha_j = C p_j$ for some $C \neq 0$ and probabilities $p_j \in [0,1]$ for $j = 1, ..., m$ and $\sum_{j=1}^{m} p_j = 1$. We are motivated to consider equations of this form because Equation \ref{v2_equation} is in this form with $\alpha_0 = - \mu$ and $\alpha_j = - \frac{\lambda \theta}{N} p_j$ for $j = 1, ... , m$. Alternatively, we can write Equation \ref{eqn_multi_alphas} as 
\begin{eqnarray} 
\overset{\bullet}{u}(t) = \alpha_0 u(t) + C \cdot \mathbb{E}[u(t - \Delta)]
\end{eqnarray}

\noindent where we are viewing $\Delta$ as a discrete random variable which has value $\Delta_j$ with probability $p_j$ for $j = 1, ..., m$. Unfortunately in the general case, there does not exist an explicit solution for the critical delay $\Delta_{cr}$, which determines what values of $\Delta_1, ..., \Delta_m$ a Hopf bifurcation will occur at.  This leads us to search for ways to approximate where the change in stability occurs.  A natural first step is to compare the multi-delay system to its constant-delay counterpart, that is 
\begin{eqnarray}
\overset{\bullet}{u}(t) = \alpha_0 u(t) + C \cdot u(t - \Delta^*)
\end{eqnarray}
\noindent for some constant $\Delta^* > 0$, so we have essentially replaced the discrete random variable $\Delta$ with a constant delay $\Delta^*$ to get a single-delay approximation of the multi-delay equation. The advantage of making this approximation is that we can easily compute the critical delay of the constant-delay approximation, which we can use as an approximation for where the change in stability occurs in the multi-delay system. Indeed, the critical delay of the constant-delay system is $$\Delta_{\text{cr}}^{\text{constant}} = \frac{\arccos \left( {-\frac{\alpha_0}{C}} \right)  } {\sqrt{C^2 - \alpha_0^2}}. $$

\noindent In Figure \ref{multidelay_constantdelay_scatterplot}, we numerically compare the stability of the multi-delay system for $m=2$, $N \geq 2$, and $p_1 = p_2 = \frac{1}{2}$ to that of the constant-delay system with $\Delta^* = p_1 \Delta_1 + p_2 \Delta_2 = \mathbb{E}[\Delta]$ by numerically integrating for many $(\Delta_1, \Delta_2)$ pairs and seeing if the solutions are stable or unstable at each point $(\Delta_1, \Delta_2)$. We see that the constant-delay system ends up being more unstable than the multi-delay system and the constant-delay system seems to approximate the change in stability best when $\Delta_1 = \Delta_2$. Of course, it makes sense that the approximation is good when $\Delta_1 = \Delta_2$ as the multi-delay DDE reduces to a single-delay DDE in this case which makes our formula for the critical delay exact.

\begin{figure}[ht!]
\hspace{10mm}
\includegraphics[scale=.7]{./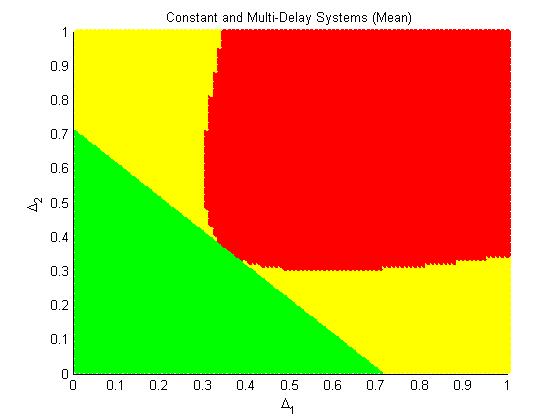}
\captionsetup{justification=centering,margin=2cm}
\caption{\textcolor{green}{Green}: Both the constant-delay and multi-delay systems are stable (solutions converge to an equilibrium) \\ \textcolor{yellow}{Yellow}: constant-delay system is unstable and the multi-delay system is stable \\ \textcolor{red}{Red}: Both the constant-delay and multi-delay systems are unstable (solutions oscillate and approach a limit cycle) \\ $p_1 = p_2 = .5, \alpha_0 = -1, C = -5$}
\label{multidelay_constantdelay_scatterplot}
\end{figure}

The accuracy of the constant-delay approximation will depend on the choice of the constant $\Delta^*$. In the two-delay setting, we numerically compare the performance of the constant-delay system with several different choices of $\Delta^*$ being used.  We created scatterplots such as the one in Figure \ref{multidelay_constantdelay_scatterplot} for several values of $p$ and we considered four different choices of $\Delta^*$. 

\begin{itemize}
\item $\Delta^* = \Delta_1$
\item $\Delta^* = \Delta_2$
\item $\Delta^* = \frac{\Delta_1 + \Delta_2}{2}$ (midpoint)
\item $\Delta^* = p \Delta_1 + (1-p) \Delta_2 = \mathbb{E}[\Delta]$ (mean)
\end{itemize}

\noindent We collected data on how accurately the constant-delay approximation approximates the stability of the multi-delay system under each of these choices of $\Delta^*$ in Table \ref{Table_Accuracy_Constant_Delay}.  For clarity, the accuracy percentages are computed according to the formula $$\text{Accuracy } \% = 100 \cdot \frac{\text{ \# of points with same color in approximation and multi-delay plots}}{\text{total \# of points considered in the unit square}} .$$ It is worth noting that this definition of accuracy is somewhat arbitrary due to the restriction of only considering $(\Delta_1, \Delta_2)$ pairs in the unit square and sampling $(\Delta_1, \Delta_2)$ pairs from other regions would give different results in general. We give some examples of this in the Appendix.

From the information in Table \ref{Table_Accuracy_Constant_Delay}, we observe that the choice $\Delta^* = \mathbb{E}[\Delta]$ gives the best results on average (uniformly sampling over each value of $p$ in the table), though other choices of $\Delta^*$ perform better for some specific values of $p$. Moving forward, we will see if we can improve upon the constant-delay approximation by introducing other approximations of the multi-delay system in the following two sections.

\begin{table}[]
\begin{center}
\begin{tabular}{| l | l | l | l | l | l |}
\hline\\ [-2.5ex]
$p$ & $\Delta^* = \Delta_1$ & $\Delta^* = \Delta_2$  & Midpoint & Mean  \\
\hline
.01  & 53.91\% & 99.99\% & 76.95\% & 99.52\%\\
\hline
.1 & 54.28\% & 97.40\% & 77.32\% & 95.82\%\\
\hline
.2 & 56.58\% & 94.62\% & 79.62\% & 91.22\% \\
\hline
.3 & 62.65\% & 89.15\% & 79.41\% & 83.71\% \\
\hline
.4 &  71.20\% & 81.96\% & 73.88\% & 74.76\% \\
\hline
.5 & 76.96\% & 76.96\% & 72.38\% & 72.38\%\\
\hline
.6 & 81.96\% & 71.20\% & 73.88\% & 74.76\%\\
\hline
.7 & 89.15\% & 62.65\% & 79.41\% & 83.71\% \\
\hline
.8 & 94.62\% & 56.58\% & 79.62\% & 91.22\% \\
\hline
.9 & 97.40\% & 54.28\% & 77.32\% & 95.82\% \\
\hline
.99 & 99.99\% & 53.91\% & 76.95\% & 99.52\%  \\
\hline
\end{tabular}
\end{center}
\caption{The percentage of points in the constant-delay approximation scatterplots that correctly matched those in the multi-delay system scatterplot for various values of $p$, with $C = -5$ and $\alpha_0 = -1$. Based on the table, the average accuracy percentage for the $\Delta^* = \Delta_1$ case is about 76.25\%, for the $\Delta^* = \Delta_2$ case it is about 76.25\%, for the midpoint case it is about $76.98\%$, and for the mean case it is about 87.49\%.  }
\label{Table_Accuracy_Constant_Delay}
\end{table}

\subsection{Neutral Approximation}
\label{subsection31}

The observations in the previous section help motivate the main focus of this work in which we use a novel approximation that exploits the fact that we know how to calculate the critical delay in a neutral DDE with one constant delay.  Our goal is to approximate our multi-delay system with a neutral DDE that has a single delay, which we can do by Taylor expanding, as follows

 
 \begin{eqnarray}
\overset{\bullet}{u}(t) &=& \alpha_0 u(t) + \sum^{m}_{j=1} \alpha_j u(t - \Delta_j) \\
&=& \alpha_0 u(t) + \sum^{m}_{j=1} \alpha_j  u(t - \Delta^* + \Delta^* - \Delta_j) \\
&\approx& \alpha_0 u(t) + \sum^{m}_{j=1} \alpha_j   [ u(t - \Delta^*) + (\Delta^* - \Delta_j) \cdot \overset{\bullet}{u}(t-\Delta^*)] \\
&=& \alpha_0 u(t) + \left( \sum^{m}_{j=1} \alpha_j \right) \cdot u(t - \Delta^*) +  \left( \sum^{m}_{j=1} \alpha_j (\Delta^* - \Delta_j) \right) \cdot \overset{\bullet}{u}(t-\Delta^*)\\
&=& \alpha_0 u(t) + A_0 \cdot u(t - \Delta^*) + A_1 \cdot \overset{\bullet}{u}(t - \Delta^*)
\end{eqnarray}

\noindent where $$A_0 = \sum_{j=1}^{m} \alpha_j \hspace{5mm} \text{and} \hspace{5mm}  A_1 = \sum_{j=1}^{m} \alpha_j (\Delta^* - \Delta_j).$$ It is important to recognize that $A_1$ depends on all of the delays $\Delta_1, ..., \Delta_m$ as well as the choice of $\Delta^*$, which might depend on any of $\Delta_1, ..., \Delta_m$ itself. More specifically, $A_1$ explicitly depends on the differences $\Delta^* - \Delta_j$ for $j = 1, ..., m$. Despite this dependence, we will essentially be treating $A_1$ as a constant parameter in our analysis. With this understanding, we want to obtain an expression for the critical value of $\Delta^*$ corresponding to when solutions to this neutral DDE exhibit a change in stability. The critical delay of this neutral DDE can then be viewed as an approximation for where the change in stability occurs in the multi-delay system in the sense that the expression for the critical delay of the neutral DDE can take $\Delta^*$ and $\Delta_1, ..., \Delta_m$ as input and then be used to, with hopefully a reasonable degree of accuracy, determine the stability of the multi-delay system at the point $(\Delta_1, ..., \Delta_m)$ by checking whether or not $\Delta^*$ is greater than or less than the critical delay of the neutral DDE.

\begin{theorem}
The approximate critical delay under the neutral approximation is given by 
\begin{equation}
\Delta_{cr}^{approx} = \frac{1}{\omega }\arccos \left( \frac{- \alpha_0 A_0 + \omega^2 A_{1}}{A_0^2 + \omega^2 A_{1}^2}  \right)
\end{equation}

\noindent where $$\omega = \sqrt{\frac{A_0^2 - \alpha_0^2}{1 - A_1^2}}$$ and $$A_0 = \sum_{j=1}^{m} \alpha_j, \hspace{5mm} A_1 = \sum_{j=1}^{m} \alpha_j (\Delta^* - \Delta_j).$$

\begin{proof}
We start the proof by starting with the approximation given by the neutral DDE, i.e.
 \begin{eqnarray}
\overset{\bullet}{u}(t) &=& \alpha_0 u(t) + \left( \sum^{m}_{j=1} \alpha_j \right) \cdot u(t - \Delta^*) +  \left( \sum^{m}_{j=1} \alpha_j (\Delta^* - \Delta_j) \right) \cdot \overset{\bullet}{u}(t-\Delta^*)\\
&=& \alpha_0 u(t) + A_0 \cdot u(t - \Delta^*) +  A_1 \cdot \overset{\bullet}{u}(t-\Delta^*).
\end{eqnarray}

\noindent Letting $u(t) = e^{rt}$, we get the following characteristic equation. $$r = \alpha_0 + A_0 e^{-r \Delta^*} + A_1 r e^{-r \Delta^*}$$ A change in stability occurs when the real part of $r$ changes sign, so we consider when $r$ is on the imaginary axis so that $r = i \omega$ for some $\omega \in \mathbb{R}$. Separating real and imaginary parts yields the following system of equations.

\begin{align}
-\alpha_0 &=  A_0 \cos(\omega \Delta^*) + \omega A_1 \sin(\omega \Delta^*) \label{neutral_real_parts}\\
\omega &= - A_0 \sin(\omega \Delta^*) + \omega A_1 \cos(\omega \Delta^*) \label{neutral_imaginary_parts}
\end{align}

\noindent We can solve this system to get 

\begin{align}
\cos(\omega \Delta^*) &= \frac{- \alpha_0 A_0 + \omega^2 A_1}{A_0^2 + \omega^2 A_1^2} \label{cosine_expression_neutral}\\
\sin(\omega \Delta^*) &=  \frac{- \omega A_0 - \alpha_0 \omega A_1}{A_0^2 + \omega^2 A_1^2}
\end{align}

\noindent and using the identity $\cos^2(x) + \sin^2(x) = 1$ on Equations \ref{neutral_real_parts} and \ref{neutral_imaginary_parts}, we get $$\omega = \sqrt{\frac{A_0^2 - \alpha_0^2}{1 - A_1^2}}.$$ Then, from our cosine expression in Equation \ref{cosine_expression_neutral}, we are able to solve for the critical delay for the neutral equation, which is meant to approximate the critical delay of the queueing system. Thus, we have $$\Delta_{\text{cr}}^{approx} = \frac{1}{\omega }\arccos \left( \frac{- \alpha_0 A_0 + \omega^2 A_1}{A_0^2 + \omega^2 A_1^2}  \right).$$

\end{proof}
\end{theorem}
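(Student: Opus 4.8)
The plan is to find the critical value of $\Delta^*$ at which the neutral DDE undergoes a Hopf bifurcation, i.e.\ the value of $\Delta^*$ at which the characteristic equation has a purely imaginary root $r = i\omega$. First I would substitute the ansatz $u(t) = e^{rt}$ into the neutral approximation
\begin{equation*}
\overset{\bullet}{u}(t) = \alpha_0 u(t) + A_0\, u(t-\Delta^*) + A_1\, \overset{\bullet}{u}(t-\Delta^*),
\end{equation*}
which yields the transcendental characteristic equation $r = \alpha_0 + A_0 e^{-r\Delta^*} + A_1 r\, e^{-r\Delta^*}$. Setting $r = i\omega$ with $\omega \in \mathbb{R}$ and using $e^{-i\omega\Delta^*} = \cos(\omega\Delta^*) - i\sin(\omega\Delta^*)$, I would separate the equation into its real and imaginary parts to obtain the two real equations for $\cos(\omega\Delta^*)$ and $\sin(\omega\Delta^*)$ displayed in the statement.

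Next I would treat that pair as a linear system in the unknowns $\cos(\omega\Delta^*)$ and $\sin(\omega\Delta^*)$: the coefficient matrix is $\begin{pmatrix} A_0 & \omega A_1 \\ -A_0 & \omega A_1 \end{pmatrix}$ acting on $(\cos(\omega\Delta^*), \sin(\omega\Delta^*))^{\top}$ with right-hand side $(-\alpha_0, \omega)^{\top}$, whose determinant is $2\omega A_0 A_1$ (generically nonzero), so Cramer's rule gives the closed forms
\begin{equation*}
\cos(\omega\Delta^*) = \frac{-\alpha_0 A_0 + \omega^2 A_1}{A_0^2 + \omega^2 A_1^2}, \qquad \sin(\omega\Delta^*) = \frac{-\omega A_0 - \alpha_0\omega A_1}{A_0^2 + \omega^2 A_1^2}.
\end{equation*}
(Alternatively one squares and subtracts/adds the two original equations directly; either route works.) To pin down $\omega$ I would apply the Pythagorean identity $\cos^2 + \sin^2 = 1$. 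The cleanest way is to square Equations \eqref{neutral_real_parts} and \eqref{neutral_imaginary_parts} and add them: the cross terms $2\omega A_0 A_1 \sin(\omega\Delta^*)\cos(\omega\Delta^*)$ cancel, leaving $\alpha_0^2 + \omega^2 = A_0^2 + \omega^2 A_1^2$, hence $\omega^2(1 - A_1^2) = A_0^2 - \alpha_0^2$ and $\omega = \sqrt{(A_0^2 - \alpha_0^2)/(1 - A_1^2)}$. Finally, inverting the cosine relation gives $\Delta_{cr}^{approx} = \frac{1}{\omega}\arccos\!\left(\frac{-\alpha_0 A_0 + \omega^2 A_1}{A_0^2 + \omega^2 A_1^2}\right)$, which is the claimed formula.

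The computation is entirely routine; the only real subtlety — and the step I would flag — is the bookkeeping of branch/sign choices. One must choose the positive root for $\omega$ and the principal branch of $\arccos$, and this is only justified (rather than arbitrary) once one checks consistency with the sign of $\sin(\omega\Delta^*)$ and that the quantities under the radicals are positive, i.e.\ $A_0^2 > \alpha_0^2$ and $|A_1| < 1$ (these are exactly the conditions under which the neutral DDE admits a stability switch, paralleling the $C^2 > \alpha_0^2$ condition seen for the constant-delay case). Since the statement as given simply records the resulting formula and the surrounding text treats $A_1$ as a constant parameter, I would present the derivation as above and note in passing that the formula is meaningful precisely on the parameter region where $\omega$ is real and positive; a fuller discussion of when the bifurcation actually occurs (and of the smallest positive $\Delta^*$ solving the cosine equation) belongs to the performance analysis in Section \ref{section_2_delay} rather than to the derivation itself.
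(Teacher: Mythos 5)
Your proposal follows the paper's proof essentially verbatim: exponential ansatz, characteristic equation, $r=i\omega$, separation into real and imaginary parts, solving for $\cos(\omega\Delta^*)$ and $\sin(\omega\Delta^*)$, the Pythagorean identity to isolate $\omega$, and inversion of the cosine relation; your added remarks on branch choice and the conditions $A_0^2>\alpha_0^2$, $|A_1|<1$ go slightly beyond what the paper records but change nothing. One transcription slip worth fixing: the coefficient matrix of the linear system in $(\cos(\omega\Delta^*),\sin(\omega\Delta^*))$ is $\left(\begin{smallmatrix} A_0 & \omega A_1 \\ \omega A_1 & -A_0 \end{smallmatrix}\right)$ with determinant $-(A_0^2+\omega^2 A_1^2)$, not the matrix you wrote with determinant $2\omega A_0 A_1$ (which is inconsistent with the denominators in your own final formulas, which are correct).
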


To get an idea of how well the critical delay for the neutral system approximates the change in stability of the multi-delay system, we numerically integrated the multi-delay system with $m=2$, $p_1 = p$, and $p_2 = 1-p$ for many $(\Delta_1, \Delta_2)$ pairs and determined if the solution was stable or unstable at each point $(\Delta_1, \Delta_2)$ to create a scatterplot to show where in the $\Delta_1$-$\Delta_2$ plane the change in stability occurs and then compared this to analogous scatterplots for the neutral approximation for various choices of $\Delta^*$ and checked what percentage of points in each scatterplot matched with those in the scatterplot for the multi-delay system. We created such scatterplots for several values of $p$ and we considered four different choices of $\Delta^*$. 

\begin{itemize}
\item $\Delta^* = \Delta_1$
\item $\Delta^* = \Delta_2$
\item $\Delta^* = \frac{\Delta_1 + \Delta_2}{2}$ (midpoint)
\item $\Delta^* = p \Delta_1 + (1-p) \Delta_2 = \mathbb{E}[\Delta]$ (mean)
\end{itemize}

\noindent Examples of these scatterplots can be found in Section \ref{subsection_51}. We collected data on how accurate each approximation is in Table \ref{Table_Accuracy_Neutral}. We observe that the mean approximation $\Delta^* = \mathbb{E}[\Delta]$ does the best overall, however it does poorly near $p = \frac{1}{2}$. We will give an explanation of this in Section \ref{section_2_delay}. It is worth noting that the definition of accuracy used in Table \ref{Table_Accuracy_Neutral} is based on $(\Delta_1, \Delta_2)$ pairs sampled from the unit square, but if we had instead sampled from a different-sized square then we would in general get different accuracies. Indeed, in the Appendix we consider the scatterplots from sampling $(\Delta_1,\Delta_2)$ pairs from a two-by-two square and a five-by-five square and see that the analogously defined accuracy differs from that in the unit square case.

\begin{table}[]
\begin{center}
\begin{tabular}{| l | l | l | l | l | l |}
\hline\\ [-2.5ex]
$p$ & $\Delta^* = \Delta_1$ & $\Delta^* = \Delta_2$  & Midpoint & Mean  \\
\hline
.01  & 51.32\% & 99.52\% & 76.08\% & 99.52\%\\
\hline
.1 & 52.73\% & 95.14\% & 78.95\% & 95.82\%\\
\hline
.2 & 55.90\% & 86.63\% & 79.46\% & 91.22\% \\
\hline
.3 & 62.39\% & 77.48\% & 77.28\% & 83.71\% \\
\hline
.4 &  70.18\% & 72.09\% & 73.40\% & 74.76\% \\
\hline
.5 & 72.13\% & 72.13\% & 72.38\% & 72.38\%\\
\hline
.6 & 72.09\% & 70.18\% & 73.40\% & 74.76\%\\
\hline
.7 & 77.48\% & 62.39\% & 77.28\% & 83.71\% \\
\hline
.8 & 86.63\% & 55.90\% & 79.46\% & 91.22\% \\
\hline
.9 & 95.14\% & 52.73\% & 78.95\% & 95.82\% \\
\hline
.99 & 99.52\% & 51.32\% & 76.08\% & 99.52\%  \\
\hline
\end{tabular}
\end{center}
\caption{The percentage of points in the neutral approximation scatterplots that correctly matched those in the multi-delay system scatterplot for various values of $p$, with $C = -5$ and $\alpha_0 = -1$. Based on the table, the average accuracy percentage for the $\Delta^* = \Delta_1$ case is about 72.32\%, for the $\Delta^* = \Delta_2$ case it is about 72.32\%, for the midpoint case it is about $76.61\%$, and for the mean case it is about 87.49\%.  }
\label{Table_Accuracy_Neutral}
\end{table}


 \subsection{Keeping the Second Derivative}
\label{subsection_32}

Now that we have looked at various choices of $\Delta^*$ for our neutral approximation, it is natural to question what the best choice of $\Delta^*$ is. We explore this question next. Consider an equation in the form $$\overset{\bullet}{u}(t) = \alpha_0 u(t) + C \left[ \sum_{j=1}^{m} p_j u(t - \Delta_j)  \right] $$ where the discrete random variable $\Delta$ is equal to $\Delta_j$ with probability $p_j$ for $j = 1, ..., m$. By Taylor expanding, we obtain the neutral approximation to this equation.

\begin{align*}
\overset{\bullet}{u}(t) &= \alpha_0 u(t) + C \left[ \sum_{j=1}^{m} p_j u(t - \Delta^* + \Delta^* - \Delta_j)  \right]\\
&= \alpha_0 u(t) + C \left[  \sum_{j=1}^{m} p_j \left( u(t - \Delta^*) + (\Delta^* - \Delta_j) u'(t - \Delta^*) + \frac{1}{2!} (\Delta^* - \Delta_j)^2 u''(t - \Delta^*) + \cdots  \right)  \right] \\
&= \alpha_0 u(t) + C \left[  \sum_{k=0}^{\infty} \frac{1}{k!} \mathbb{E}\left[ (\Delta^* - \Delta)^k  \right] u^{(k)}(t - \Delta^*)  \right]\\
&= \alpha_0 u(t) + C \left[  u(t - \Delta^*) + \mathbb{E}[(\Delta^* - \Delta)] u'(t - \Delta^*)  \right] + \tilde{R}\\
&\approx  \alpha_0 u(t) + C \left[  u(t - \Delta^*) + \mathbb{E}[(\Delta^* - \Delta)] u'(t - \Delta^*)  \right]
\end{align*}

\noindent We want to choose the value of $\Delta^*$ that minimizes the remainder $\tilde{R}$ in some sense. We do this by minimizing the coefficient of the leading-order term in $\tilde{R}$. The leading term in $\tilde{R}$ is $$\frac{1}{2!} \mathbb{E}[(\Delta^* - \Delta )^2] u''(t - \Delta^*)$$ and so we want to minimize $$f(\Delta^*) = \frac{1}{2!}\mathbb{E}[(\Delta^* - \Delta)^2] = \frac{1}{2!} \sum_{j=1}^{m} p_j  (\Delta^* - \Delta_j)^2.$$ Setting the first derivative equal to 0, we get

$$ f'(\Delta^*) =  \sum_{j=1}^{m} p_j (\Delta^* - \Delta_j) =  \sum_{j=1}^{m} p_j \Delta^* -  \sum_{j=1}^{m} p_j \Delta_j =  \Delta^* -  \mathbb{E}[\Delta] = 0$$ and $f''(\Delta^*) > 0$ so we have that $\Delta^* = \mathbb{E}[\Delta]$ minimizes $f(\Delta^*)$, at which point we have $$f(\mathbb{E}[\Delta]) =\frac{1}{2!} \mathbb{E}[(\mathbb{E}[\Delta] - \Delta)^2] = \frac{1}{2!} \mathrm{Var} (\Delta)$$ and higher-order terms in $\tilde{R}$ contain the subsequent central moments. This result supports what we observed numerically as $\Delta^* = \mathbb{E}[\Delta]$ minimizes the leading-order term in the error of the neutral approximation to the multi-delay equation. We remark that making the choice $\Delta^* = \mathbb{E}[\Delta]$ makes the delayed first-derivative term vanish and so the DDE is no longer neutral in this case.

Seeing that our neutral approximation using the mean appears to be the best neutral approximation we can get if we only keep the first derivative, we naturally want to improve upon it given its poor performance near $p = \frac{1}{2}$ in the two-delay case. We aim to improve the neutral approximation from the previous section by Taylor expanding in the same fashion but keeping the second derivative term.

\begin{align*}
\overset{\bullet}{u}(t) &\approx \alpha_0 u(t) + \left( \sum_{j=1}^{m} \alpha_j \right)  u(t - \Delta^*) + \left( \sum_{j=1}^{m} \alpha_j (\Delta^* - \Delta_j)  \right)  \overset{\bullet}{u}(t - \Delta^*)\\
& + \left( \frac{1}{2} \sum_{j=1}^{m} \alpha_j (\Delta^* - \Delta_j)^2 \right)  \overset{\bullet \bullet}{u} (t - \Delta^*)\\
&= \alpha_0 u(t) + A_0 u(t - \Delta^*) + A_1 \overset{\bullet}{u}(t - \Delta^*) + A_2 \overset{\bullet \bullet}{u}(t - \Delta^*)
\end{align*}

\noindent where $$A_0 = \sum_{j=1}^{m} \alpha_j, \hspace{5mm} A_1 = \sum_{j=1}^{m} \alpha_j (\Delta^* - \Delta_j), \hspace{5mm} A_2 = \frac{1}{2} \sum_{j=1}^{m} \alpha_j (\Delta^* - \Delta_j)^2.$$

\noindent As before, we note that $A_2$, in addition to $A_1$, depends on the differences $(\Delta^* - \Delta_j)$ for $j = 1, ..., m$. Just as we did for the neutral approximation, we find the critical delay corresponding to this second-derivative equation with the goal of using it to approximate where the change in stability occurs in the multi-delay system.

\begin{theorem}
If $A_2 \neq 0$, the approximate critical delay under the second-derivative approximation is given by 

$$\Delta_{\text{cr}}^{approx_2} = \frac{1}{\omega}\arccos \left( - \frac{\alpha_0 (A_0 - A_2 \omega^2)^2 + A_1 \omega^2 (A_0 - A_2 \omega^2) + \alpha_0 \omega^2 A_1^2 + A_1^3 \omega^3}{(A_0 - A_2 \omega^2)^3}  \right)$$ where $$\omega^2 = \frac{1 + 2A_0 A_2 - A_1^2}{2 A_2^2} \pm \frac{\sqrt{\left( A_1^2 - 2A_0 A_2 - 1   \right)^2 - 4 A_2^2 \left( A_0^2 - \alpha_0^2 \right)}}{2 A_2^2}.$$
\end{theorem}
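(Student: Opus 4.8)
The plan is to follow the same route as in the proof of the neutral approximation, the key observation being that the extra delayed second-derivative term only replaces the ``constant'' $A_0$ in front of the exponential by an $\omega$-dependent quantity. First I would substitute the ansatz $u(t)=e^{rt}$ into
$$\overset{\bullet}{u}(t)=\alpha_0 u(t)+A_0 u(t-\Delta^*)+A_1\overset{\bullet}{u}(t-\Delta^*)+A_2\overset{\bullet\bullet}{u}(t-\Delta^*),$$
which yields the characteristic equation
$$r=\alpha_0+\bigl(A_0+A_1 r+A_2 r^2\bigr)e^{-r\Delta^*}.$$
A change of stability occurs exactly when a root crosses the imaginary axis, so I set $r=i\omega$ with $\omega\in\mathbb{R}$; by conjugate symmetry it suffices to take $\omega>0$. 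Using $A_0+A_1 i\omega+A_2(i\omega)^2=(A_0-A_2\omega^2)+iA_1\omega$ and separating real and imaginary parts gives
\begin{align*}
-\alpha_0&=(A_0-A_2\omega^2)\cos(\omega\Delta^*)+A_1\omega\sin(\omega\Delta^*),\\
\omega&=-(A_0-A_2\omega^2)\sin(\omega\Delta^*)+A_1\omega\cos(\omega\Delta^*),
\end{align*}
which is structurally identical to Equations \ref{neutral_real_parts}--\ref{neutral_imaginary_parts} with $A_0$ replaced by $A_0-A_2\omega^2$.

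Next I would treat this as a linear system in $\cos(\omega\Delta^*)$ and $\sin(\omega\Delta^*)$ and solve it (Cramer's rule), obtaining closed forms for both with common denominator $(A_0-A_2\omega^2)^2+A_1^2\omega^2$. Squaring and adding, the identity $\cos^2+\sin^2=1$ forces
$$\alpha_0^2+\omega^2=(A_0-A_2\omega^2)^2+A_1^2\omega^2,$$
and expanding the right-hand side turns this into a quadratic in $\omega^2$,
$$A_2^2\,\omega^4+\bigl(A_1^2-2A_0A_2-1\bigr)\,\omega^2+\bigl(A_0^2-\alpha_0^2\bigr)=0.$$
Since $A_2\neq 0$ by hypothesis this is genuinely quadratic (the degenerate case $A_2=0$ reducing to the neutral formula), so the quadratic formula yields the two candidate values of $\omega^2$ stated in the theorem. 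Finally, substituting an admissible $\omega$ back into the solved expression for $\cos(\omega\Delta^*)$ and inverting the cosine produces the claimed formula for $\Delta_{\text{cr}}^{approx_2}$.

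The main difficulty relative to the neutral case is bookkeeping together with admissibility: because $A_0$ has effectively become $A_0-A_2\omega^2$, the Pythagorean step produces a quartic in $\omega$ (quadratic in $\omega^2$) rather than a simple rational expression, so one must argue that at least one root $\omega^2$ is real and positive for an imaginary-axis crossing to exist — which requires the discriminant $(A_1^2-2A_0A_2-1)^2-4A_2^2(A_0^2-\alpha_0^2)$ to be nonnegative and the selected root to be positive — and that the corresponding $\arccos$ argument lies in $[-1,1]$. One also has to keep track of the $\pm$ branch and, when more than one admissible pair $(\omega,\Delta^*)$ arises, select the one relevant to the first change of stability; I would record these as the standing nondegeneracy conditions under which the formula is meaningful, in parallel with the analogous (implicit) conditions accompanying the neutral formula.
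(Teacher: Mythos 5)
Your proposal follows essentially the same route as the paper's proof: substitute $u(t)=e^{rt}$, set $r=i\omega$, separate real and imaginary parts into the same two equations (with $A_0$ effectively replaced by $A_0-A_2\omega^2$), apply $\cos^2+\sin^2=1$ to obtain the quadratic in $\omega^2$, and invert the cosine to get $\Delta_{\text{cr}}^{approx_2}$; your added remarks on admissibility (positivity of a real root $\omega^2$, the $\arccos$ argument lying in $[-1,1]$, and the branch choice) go slightly beyond what the paper records but are consistent with it. One small caution: solving the linear system by Cramer's rule gives $\cos(\omega\Delta^*)=\bigl(A_1\omega^2-\alpha_0(A_0-A_2\omega^2)\bigr)/\bigl((A_0-A_2\omega^2)^2+A_1^2\omega^2\bigr)$, which does not visibly coincide with the theorem's expression with denominator $(A_0-A_2\omega^2)^3$ except after further manipulation (they do agree in the case $A_1=0$, i.e.\ $\Delta^*=\mathbb{E}[\Delta]$, which is the one used in the numerics), so the final "substitute back and invert" step should be checked rather than asserted.
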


\begin{proof}

We begin the proof by starting with the second-derivative approximation

$$\overset{\bullet}{u}(t) = \alpha_0 u(t) +A_0 u(t - \Delta^*) + A_1 \overset{\bullet}{u}(t - \Delta^*) + A_2 \overset{\bullet \bullet}{u}(t - \Delta^*).$$

\noindent Letting $u(t) = e^{rt}$ and $r = i \omega$, we get $$i \omega = \alpha_0 + A_0 e^{- i \omega \Delta^*} + i \omega A_1 e^{- i \omega \Delta^*} - \omega^2 A_2 e^{- i \omega \Delta^*}$$ and separating real and imaginary parts gives us the following system.

\begin{align}
- \alpha_0 &= \cos(\omega \Delta^*)[A_0 - A_2 \omega^2] + \sin(\omega \Delta^*) \cdot A_1 \omega\\
\omega &= - \sin(\omega \Delta^*) [A_0 - A_2 \omega^2] + \cos(\omega \Delta^*) \cdot A_1 \omega
\end{align}

\noindent This implies that $$\alpha_0^2 + \omega^2 = A_0^2 - 2 A_2 A_0 \omega^2 + A_2^2 \omega^4 + A_1^2 \omega^2$$ so that $$A_2^2 \omega^4 + \left( A_1^2 - 2A_0 A_2 - 1  \right)\omega^2 + \left( A_0^2 - \alpha_0^2  \right) = 0$$ which is a quadratic equation in $\omega^2$. Solving this, we get that $$\omega^2 = \frac{1 + 2A_0 A_2 - A_1^2}{2 A_2^2} \pm \frac{\sqrt{\left( A_1^2 - 2A_0 A_2 - 1   \right)^2 - 4 A_2^2 \left( A_0^2 - \alpha_0^2 \right)}}{2 A_2^2}.$$ From the above system, we can get $$\cos(\omega \Delta^*) = - \frac{\alpha_0 (A_0 - A_2 \omega^2)^2 + A_1 \omega^2 (A_0 - A_2 \omega^2) + \alpha_0 \omega^2 A_1^2 + A_1^3 \omega^3}{(A_0 - A_2 \omega^2)^3}$$ and thus $$\Delta_{\text{cr}}^{approx_2} = \frac{1}{\omega}\arccos \left( - \frac{\alpha_0 (A_0 - A_2 \omega^2)^2 + A_1 \omega^2 (A_0 - A_2 \omega^2) + \alpha_0 \omega^2 A_1^2 + A_1^3 \omega^3}{(A_0 - A_2 \omega^2)^3}  \right).$$

\end{proof}

We notice that we have two roots for $\omega^2$ and consequently two values for $\Delta_{\text{cr}}^{approx_2}$. Based on numerical experiments, it seems that the negative root is significantly more accurate than the positive root, so in the numerics that follow we use $$\Delta_{\text{cr}}^{approx_2} = \frac{1}{\omega}\arccos \left( - \frac{\alpha_0 (A_0 - A_2 \omega^2)^2 + A_1 \omega^2 (A_0 - A_2 \omega^2) + \alpha_0 \omega^2 A_1^2 + A_1^3 \omega^3}{(A_0 - A_2 \omega^2)^3}  \right)$$ where $$\omega^2 = \frac{1 + 2A_0 A_2 - A_1^2}{2 A_2^2} - \frac{\sqrt{\left( A_1^2 - 2A_0 A_2 - 1   \right)^2 - 4 A_2^2 \left( A_0^2 - \alpha_0^2 \right)}}{2 A_2^2}.$$

We now consider several choices of $\Delta^*$ in the second-derivative approximation and numerically compare the stability of the approximation under each choice against that of the multi-delay system for $m=2$ and several values of $p$. We consider the following four choices of $\Delta^*$. 

\begin{itemize}
\item $\Delta^* = \Delta_1$
\item $\Delta^* = \Delta_2$
\item $\Delta^* = \frac{\Delta_1 + \Delta_2}{2}$ (midpoint)
\item $\Delta^* = p \Delta_1 + (1-p) \Delta_2 = \mathbb{E}[\Delta]$ (mean)
\end{itemize}

 We collect this information in Table \ref{table_2nd_derivative_delta_star_choices} and it turns out that the choice $\Delta^* = \mathbb{E}[\Delta]$ is by far the best choice according to the table.

\begin{table}[]
\begin{center}
\begin{tabular}{| l | l | l | l | l | l |}
\hline\\ [-2.5ex]
$p$ & $\Delta^* = \Delta_1$ & $\Delta^* = \Delta_2$  & Midpoint & Mean  \\
\hline
.01  & 26.76\% & 99.58\% & 47.18\% & 99.86\%\\
\hline
.1 & 29.28\% & 95.53\% & 53.60\% & 99.40\%\\
\hline
.2 & 31.84\% & 81.20\% & 60.18\% & 96.83\% \\
\hline
.3 & 35.31\% & 62.96\% & 71.35\% & 95.06\% \\
\hline
.4 &  39.69\% & 51.84\% & 88.42\% & 98.47\% \\
\hline
.5 & 44.91\% & 44.91\% & 99.38\% & 99.38\%\\
\hline
.6 & 51.84\% & 39.69\% & 88.42\% & 98.47\%\\
\hline
.7 & 62.96\% & 35.31\% & 71.35\% & 95.06\% \\
\hline
.8 & 81.20\% & 31.84\% & 60.18\% & 96.83\% \\
\hline
.9 & 95.53\% & 29.28\% & 53.60\% & 99.40\% \\
\hline
.99 & 99.58\% & 26.76\% & 47.18\% & 99.86\%  \\
\hline
\end{tabular}
\end{center}
\caption{The percentage of points in the second-derivative scatterplots that correctly matched those in the scatterplot corresponding to the multi-delay system with $m = 2$ for various values of $p$ and choices of $\Delta^*$, with $\alpha_0 = -1$ and $C = -5$. Based on the table, the average accuracy percentage for the $\Delta^* = \Delta_1$ case is about 54.45\%, for the $\Delta^* = \Delta_2$ case it is about 54.45 \%, for the midpoint case it is about $67.35\%$, and for the mean case it is about 98.06\%.   }
\label{table_2nd_derivative_delta_star_choices}
\end{table}


Given that the choice of $\Delta^* = \mathbb{E}[\Delta]$ was the most successful approximation in both the neutral and second-derivative approximations, we now consider this choice of $\Delta^*$ and compare the neutral and second-derivative approximations to the the multi-delay system with $m=2$ for various values of $p$. We collect this information in Table \ref{Table_Accuracy_2nd} and we show examples of the scatterplots we generated while collecting this data in Section \ref{subsection_52}. The second-derivative approximation is $$\overset{\bullet}{u}(t) = \alpha_0 u(t) + C \cdot u(t - \Delta^*)+ C [\Delta^{*} - \mathbb{E}[\Delta] ] \cdot  \overset{\bullet}{u}(t - \Delta^*)+ \frac{C}{2} \mathbb{E}[(\Delta^* - \Delta)^2] \cdot \overset{\bullet \bullet}{u}(t - \Delta^*). $$ However, if we make the choice $\Delta^* = \mathbb{E}[\Delta]$, then this reduces to $$\overset{\bullet}{u}(t) = \alpha_0 u(t) + C \cdot u(t - \mathbb{E}[\Delta]) + \frac{C}{2} \text{Var}(\Delta) \cdot \overset{\bullet \bullet}{u}(t - \Delta^*). $$ It is worth mentioning that with this choice of $\Delta^*$, we have that $\Delta_{\text{cr}}^{approx_2}$ is undefined when $\text{Var}(\Delta) = 0$ as this implies that $A_2 = 0$ and thus the expression for $\omega^2$ has division by zero. When $A_2 = 0$, the critical delay $\Delta_{\text{cr}}^{approx_2}$ of the second-derivative approximation reduces to that of the neutral approximation $\Delta_{\text{cr}}^{approx}$ because the second-derivative term in the DDE vanishes and thus reduces to a neutral equation.

\begin{table}[]
\begin{center}
\begin{tabular}{| l | l | l | l | l | l |}
\hline\\ [-2.5ex]
$p$ & Neutral (Mean) & Second Derivative (Mean) \\
\hline
.01  &  99.52\% & 99.86\%\\
\hline
.1 &  95.82\% & 99.40\% \\
\hline
.2 &  91.22\% & 96.83\%\\
\hline
.3 &  83.71\% & 95.06\%\\
\hline
.4 &   74.76\% &98.47\%\\
\hline
.5 &  72.38\%& 99.38\%\\
\hline
.6 &  74.76\%& 98.47\%\\
\hline
.7 &  83.71\% & 95.06\%\\
\hline
.8 &  91.22\% & 96.83\%\\
\hline
.9 &  95.82\% & 99.40\%\\
\hline
.99 &  99.52\% &99.86\% \\
\hline
\end{tabular}
\end{center}
\caption{The percentage of points in the neutral approximation and second-derivative approximation scatterplots that correctly matched those in the scatterplot corresponding to the multi-delay system with $m = 2$ for various values of $p$, with $\alpha_0 = -1$ and $C = -5$. Based on the table, the average accuracy percentage for the neutral case is about 87.49\% and for the second-derivative case it is about 98.06\%.  }
\label{Table_Accuracy_2nd}
\end{table}


\begin{table}[]
\begin{center}
\begin{tabular}{| l | l | l | l | l | l | l | l | l | l |}
\hline\\ [-2.5ex]
     & $p_1=.1$ & $p_1=.2$  & $p_1=.3$ & $p_1=.4$ & $p_1=.5$ & $p_1=.6$ & $p_1=.7$ & $p_1=.8$ \\
\hline
$p_2=.1$  &         90.96\% &          85.31\% &          75.50\% &           68.86\% &            68.86\% &            75.50\% &            85.31\% &           90.96\% \\
\hline
$p_2=.2$ &          85.31\% &          77.25\% &          68.36\% &           65.01\% &            68.36\% &            77.25\% &            85.31\% &            \\
\hline
$p_2=.3$ &          75.50\% &          68.36\% &          62.72\% &           62.72\% &            68.36\% &            75.50\% &                  &            \\
\hline
$p_2=.4$ &          68.86\% &          65.01\% &          62.72\% &           65.01\% &            68.86\% &                  &                  &            \\
\hline
$p_2=.5$ &          68.86\% &          68.36\% &          68.36\% &          68.86\% &                  &                  &                  &            \\
\hline
$p_2=.6$ &          75.50\% &          77.25\% &          75.50\% &                 &                  &                  &                  &            \\
\hline
$p_2=.7$ &          85.31\% &          85.31\% &                &                 &                  &                  &                  &            \\
\hline
$p_2=.8$ &          90.96\% &                &                &                 &                  &                  &                  &            \\
\hline
\end{tabular}
\end{center}
\caption{The percentage of points in the \textbf{neutral approximation} scatterplot that correctly matched those in the scatterplot corresponding to the multi-delay system with $m = 3$ for various values of $p_1$ and $p_2$ where $p_3 = 1 - p_1 - p_2$, with $\alpha_0 = -1$ and $C = -5$.}
\label{table5}
\end{table}


\begin{table}[]
\begin{center}
\begin{tabular}{| l | l | l | l | l | l | l | l | l | l |}
\hline\\ [-2.5ex]
     & $p_1=.1$ & $p_1=.2$  & $p_1=.3$ & $p_1=.4$ & $p_1=.5$ & $p_1=.6$ & $p_1=.7$ & $p_1=.8$ \\
\hline
$p_2=.1$  &         98.93\% &          95.54\% &          96.13\% &           98.56\% &            98.56\% &            96.13\% &            95.54\% &           98.93\% \\
\hline
$p_2=.2$ &          95.54\% &          94.71\% &          95.54\% &           97.13\% &            95.44\% &            94.71\% &            95.54\% &            \\
\hline
$p_2=.3$ &          96.13\% &          95.44\% &          95.61\% &           95.61\% &            95.44\% &            96.13\% &                  &            \\
\hline
$p_2=.4$ &          98.56\% &          97.13\% &          95.61\% &           97.13\% &            98.56\% &                  &                  &            \\
\hline
$p_2=.5$ &           98.56\% &          95.44\% &          95.44\% &          98.56\% &                  &                  &                  &            \\
\hline
$p_2=.6$ &          96.13\% &          94.71\% &          96.13\% &                 &                  &                  &                  &            \\
\hline
$p_2=.7$ &          95.54\% &          95.54\% &                &                 &                  &                  &                  &            \\
\hline
$p_2=.8$ &          98.93\% &                &                &                 &                  &                  &                  &            \\
\hline
\end{tabular}
\end{center}
\caption{The percentage of points in the \textbf{second-derivative approximation} scatterplot that correctly matched those in the scatterplot corresponding to the multi-delay system with $m = 3$ for various values of $p_1$ and $p_2$ where $p_3 = 1 - p_1 - p_2$, with $\alpha_0 = -1$ and $C = -5$.}
\label{table6}
\end{table}

We see that the second-derivative approximation performs very well in the two-delay case for all values of $p$. However, it is worth noting that it does the worst when $p$ is somewhere between $.2$ and $.4$ or $.6$ and $.8$. We will explore the reasoning behind this in Section \ref{section_2_delay}.

We also considered both the neutral and second-derivative approximations in the three-delay setting, taking points $(\Delta_1, \Delta_2, \Delta_3)$ from the unit cube. In Table \ref{table5} and Table \ref{table6}, we collected data on how accurate the neutral and second-derivative approximations are, respectively, for several values of $p_1$ and $p_2$ (letting $p_3 = 1 - p_1 - p_2$). Indeed, we see that the second-derivative approximation still seems to perform decently well in the three-delay setting. 

When working in a three-delay setting, another potentially interesting choice of $\Delta^*$ to consider is $\Delta^* = \text{median}(\Delta)$. However, choosing $\Delta^*$ to be the mean still performed better numerically in all of the situations that we considered. See, for example, the scatterplots in Figure \ref{median_example} where we have scatterplots for both the neutral and second-derivative approximations, each with both the mean and median choices for $\Delta^*$ considered.

\vspace{5mm}

\begin{figure}[!hb]
     \begin{subfigure}[b]{0.3\textwidth}
         \centering
       \includegraphics[width=\textwidth]{./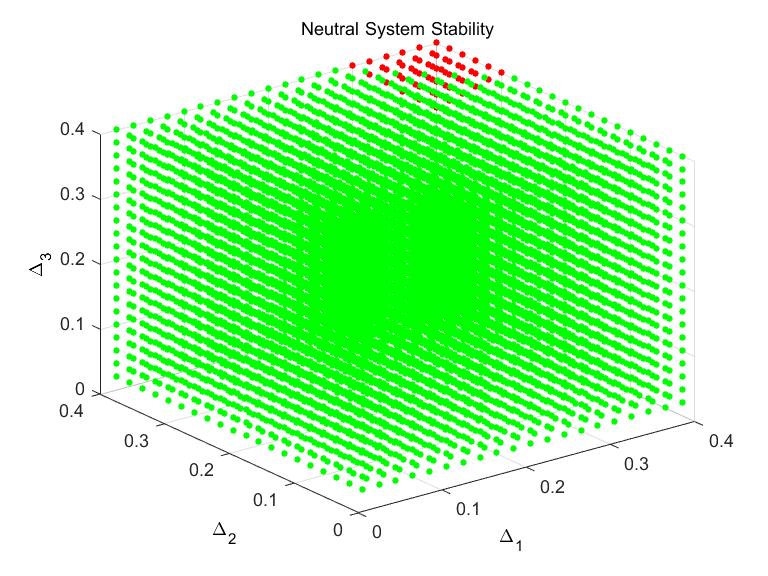}
     \end{subfigure}
     ~\hspace{-.01in}~
     \begin{subfigure}[b]{0.3\textwidth}
         \centering
         \includegraphics[width=\textwidth]{./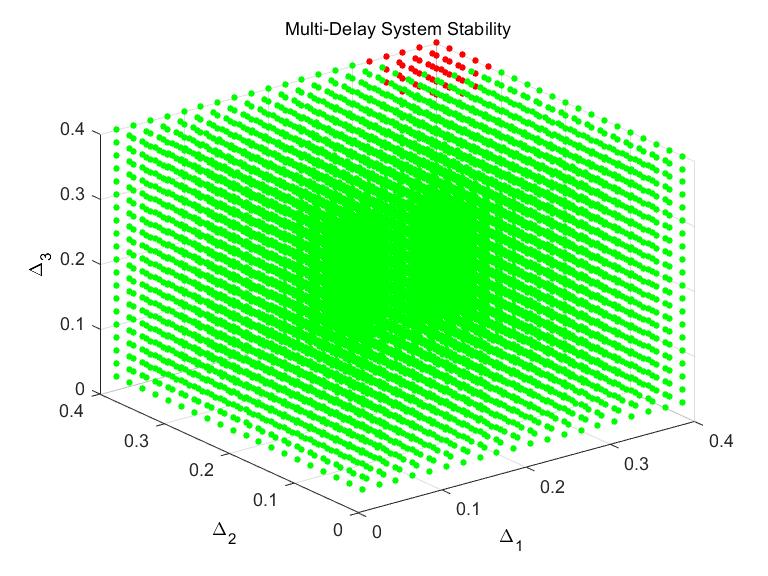}
          \centering
     \end{subfigure}
     ~\hspace{-.01in}~
     \begin{subfigure}[b]{0.3\textwidth}
         \centering
         \includegraphics[width=\textwidth]{./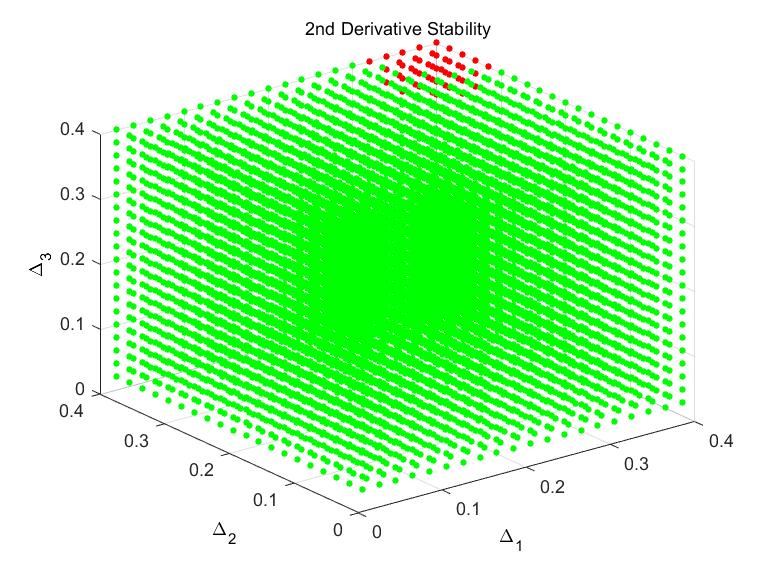}
     \end{subfigure}
\centering
     \begin{subfigure}[b]{0.3\textwidth}
         \centering
         \includegraphics[width=\textwidth]{./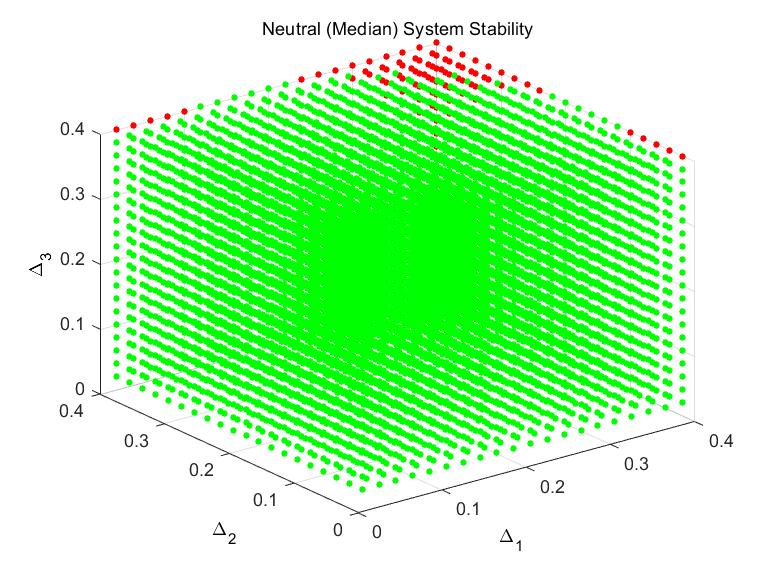}
     \end{subfigure}
     \begin{subfigure}[b]{0.3\textwidth}
         \centering
         \includegraphics[width=\textwidth]{./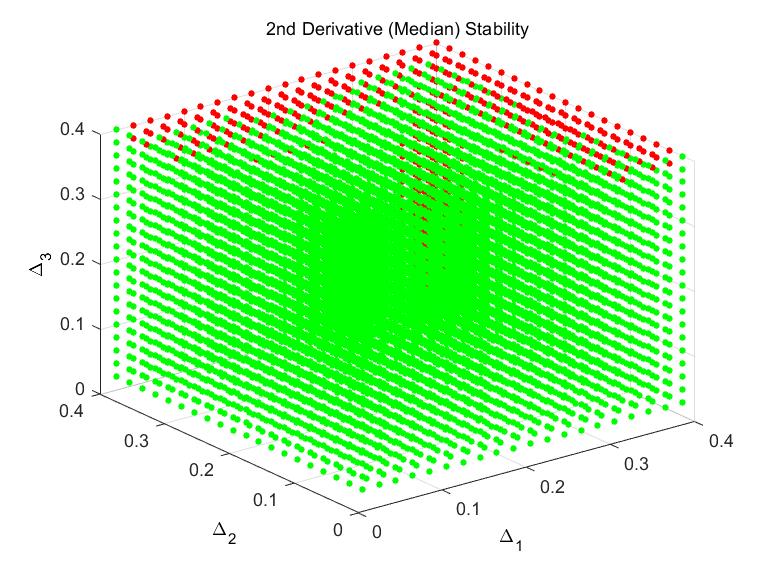}
     \end{subfigure}

\caption{On the first row, we have the neutral approximation with $\Delta^* = \mathbb{E}[\Delta]$ (Left), the three-delay system (Middle), and the second-derivative approximation with $\Delta^* = \mathbb{E}[\Delta]$ (Right). In the second row, we have the neutral approximation with $\Delta^* = \text{median}(\Delta)$ (Left) and the second-derivative approximation with $\Delta^* = \text{median}(\Delta)$. In all of the scatterplots, we have $p_1 = p_2 = p_3 = \frac{1}{3}$, $\alpha_0 = -1$, and $C = -5$. Green points in the scatterplots correspond to points where the system is stable and red points in the scatterplots correspond to points where the system is unstable.}
\label{median_example}
\end{figure}



\section{Explicit Analysis of Two-Delay Model}
\label{section_2_delay}

We examine the two-delay model to get a better understanding of how good our approximations are in this setting. In the two-delay model, we assume that delay $\Delta_1$ occurs with probability $p$ and delay $\Delta_2$ occurs with probability $1-p$, as written below.

\begin{align}
\overset{\bullet}{u}(t) &= \alpha_0 u(t) + \alpha_1 u(t - \Delta_1) + \alpha_2 u(t - \Delta_2)\\
&= \alpha_0 u(t) + C [p \cdot u(t - \Delta_1) + (1-p) \cdot u(t - \Delta_2)]
\end{align}

\noindent In this two-delay setting our neutral approximation is $$\overset{\bullet}{u}(t) = \alpha_0 u(t) + A_0 \cdot u(t - \Delta^*) + A_1 \cdot \overset{\bullet}{u}(t - \Delta^*)$$ and our second-derivative approximation is $$\overset{\bullet}{u}(t) = \alpha_0 u(t) + A_0 \cdot u(t - \Delta^*) + A_1 \cdot \overset{\bullet}{u}(t - \Delta^*) + A_2 \cdot \overset{\bullet \bullet}{u}(t - \Delta^*)$$ with $$A_0 = \alpha_1 + \alpha_2 = C$$
$$A_1 = \alpha_1 (\Delta^* - \Delta_1) + \alpha_2 (\Delta^* - \Delta_2) = C \cdot \mathbb{E}[(\Delta^* - \Delta)]$$ $$A_2 =  \frac{1}{2}[\alpha_1 (\Delta^* - \Delta_1)^2 + \alpha_2 (\Delta^* - \Delta_2)^2] = \frac{C}{2} \cdot \mathbb{E}[(\Delta^* - \Delta)^2].$$

\noindent However, we saw earlier that the choice $\Delta^* = \mathbb{E}[\Delta]$ performed the best in the neutral case, so we will make that choice here as well. Making this choice makes it so that $$A_1 = 0 \hspace{5mm} \text{and} \hspace{5mm} A_2 = \frac{C}{2} \cdot \text{Var}(\Delta). $$ We note that this causes the first derivative term to vanish in our neutral approximation. Recall that before truncating our Taylor expansion, we had $$\overset{\bullet}{u}(t) = \alpha_0 u(t) + C \left[  \sum_{k=0}^{\infty} \frac{1}{k!} \mathbb{E}\left[ (\Delta^* - \Delta)^k  \right] u^{(k)}(t - \Delta^*)  \right]$$ and with the choice of $\Delta^* = \mathbb{E}[\Delta]$, this becomes

\begin{align*}
 \overset{\bullet}{u}(t) &= \alpha_0 u(t) + C \left[  \sum_{k=0}^{\infty} \frac{1}{k!} \mathbb{E}\left[ (\mathbb{E}[\Delta] - \Delta)^k  \right] u^{(k)}(t - \mathbb{E}[\Delta])  \right]\\
&= \alpha_0 u(t) + C \left[  \sum_{k=0}^{\infty} \frac{(-1)^k}{k!} \mathbb{E}\left[ ( \Delta- \mathbb{E}[\Delta] )^k  \right] u^{(k)}(t - \mathbb{E}[\Delta])  \right]\\
&= \alpha_0 u(t) + C \left[  \sum_{k=0}^{\infty} \frac{(-1)^k}{k!} m_k  u^{(k)}(t - \mathbb{E}[\Delta])  \right]
\end{align*}

\noindent where $m_k$ is the $k$-th central moment of $\Delta$. Because of this, we can see that the error terms of our approximations can be expressed in terms of central moments of $\Delta$. We are interested in understanding how approximations would do if we kept higher-order terms in the Taylor expansion. This will lead us to explore some properties of the central moments of $\Delta$ when examining the error of our approximations. We start by stating Lemma \ref{max_variance_lemma} and Lemma \ref{2nd_error_lemma} which explain why our approximations did poorly near certain values of $p$.

\begin{lemma}
\label{lemma41}
The coefficient of the leading-order term in the error for the neutral approximation of the two-delay system with $\Delta^* = \mathbb{E}[\Delta]$ is maximized at $p = \frac{1}{2}$.
\label{max_variance_lemma}
\end{lemma}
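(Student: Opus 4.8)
The plan is to reduce the lemma to an elementary optimization of the variance of a two-point random variable. Recall from the discussion preceding the statement that, once we set $\Delta^* = \mathbb{E}[\Delta]$, the Taylor expansion of the multi-delay right-hand side about $t - \Delta^*$ takes the form
$$\overset{\bullet}{u}(t) = \alpha_0 u(t) + C\left[\, \sum_{k=0}^{\infty} \frac{(-1)^k}{k!} m_k\, u^{(k)}(t - \mathbb{E}[\Delta]) \right],$$
where $m_k$ is the $k$-th central moment of $\Delta$. The neutral approximation retains the $k = 0$ term and the $k = 1$ term, the latter being zero since $m_1 = 0$. Hence the leading-order term of the error is the $k = 2$ term, whose coefficient is (up to the fixed constant factor $C/2$) equal to $m_2 = \mathrm{Var}(\Delta)$. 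So maximizing the magnitude of this coefficient over $p$ is the same as maximizing $\mathrm{Var}(\Delta)$ over $p$, and the remaining work is purely about $\mathrm{Var}(\Delta)$ as a function of $p$.

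Next I would compute $\mathrm{Var}(\Delta)$ explicitly for the two-point law in which $\Delta = \Delta_1$ with probability $p$ and $\Delta = \Delta_2$ with probability $1 - p$. Using $\mathbb{E}[\Delta] = p\Delta_1 + (1-p)\Delta_2$, one gets $\Delta_1 - \mathbb{E}[\Delta] = (1-p)(\Delta_1 - \Delta_2)$ and $\Delta_2 - \mathbb{E}[\Delta] = -p(\Delta_1 - \Delta_2)$, so that
$$\mathrm{Var}(\Delta) = p(1-p)^2(\Delta_1 - \Delta_2)^2 + (1-p)p^2(\Delta_1 - \Delta_2)^2 = p(1-p)(\Delta_1 - \Delta_2)^2.$$
Thus the coefficient of the leading-order error term equals $\tfrac{C}{2}\, p(1-p)(\Delta_1 - \Delta_2)^2$, and for fixed $\Delta_1 \neq \Delta_2$ its dependence on $p$ is entirely through the factor $g(p) = p(1-p)$.

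Finally I would invoke the one-line calculus fact that $g(p) = p(1-p)$ on $[0,1]$ has $g'(p) = 1 - 2p$ and $g''(p) = -2 < 0$, hence attains its unique maximum at $p = \tfrac12$; equivalently, $g$ is concave and symmetric about $p = \tfrac12$. This yields the claim. I do not anticipate any real obstacle: the only substantive step is the identity $\mathrm{Var}(\Delta) = p(1-p)(\Delta_1 - \Delta_2)^2$, and once that is in hand the maximization is immediate.
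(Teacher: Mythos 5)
Your proposal is correct and follows essentially the same route as the paper: both reduce the coefficient of the leading-order ($k=2$) error term to the variance $p(1-p)(\Delta_1-\Delta_2)^2$ of the two-point delay distribution and then maximize this concave quadratic in $p$ by elementary calculus at $p=\tfrac12$. The only cosmetic difference is that you compute the centered deviations explicitly before summing, whereas the paper expands $\mathbb{E}[(\Delta^*-\Delta)^2]$ directly; the substance is identical.
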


\begin{proof}
The leading-order error term for the neutral approximation when $\Delta^* = \mathbb{E}[\Delta]$ is $$\frac{1}{2!} \mathbb{E}[(\Delta^* - \Delta)^2] \overset{\bullet \bullet}{u}(t - \Delta^*).$$ Thus, we want to find the value of $p$ that maximizes the function 

\begin{align*}
f(p) &= \frac{1}{2!} \mathbb{E}[(\Delta^* - \Delta)^2]\\
&= \frac{1}{2!} \left( p ( [p \Delta_1 + (1-p) \Delta_2] - \Delta_1  )^2 + (1-p) ( [p \Delta_1 + (1-p) \Delta_2] - \Delta_2)   \right)\\
&= \frac{1}{2!} p (1-p)(\Delta_1 - \Delta_2)^2
\end{align*}

\noindent Setting the derivative equal to zero, we have that $$f'(p) = \frac{1}{2!} (1 - 2p) (\Delta_1 - \Delta_2)^2 = 0$$ and thus $p = \frac{1}{2}.$ Indeed, $f''(p) < 0$ and $f(p)$ is maximized at this point.

\end{proof}

\noindent Lemma \ref{lemma41} helps to explain why the neutral approximation performs poorly near $p = \frac{1}{2}$, as we saw earlier. 

\begin{lemma}
\label{3rd_moment_lemma}
The magnitude of the coefficient of the leading-order term in the error for the second-derivative approximation of the two-delay system with $\Delta^* = \mathbb{E}[\Delta]$ is maximized at $p = \frac{1}{2} \pm \frac{1}{\sqrt{12}}$.
\label{2nd_error_lemma}
\end{lemma}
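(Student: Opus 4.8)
The plan is to identify the leading-order error term for the second-derivative approximation, extract its coefficient as an explicit function of $p$, and then maximize its magnitude over $p \in [0,1]$. Since we are working with $\Delta^* = \mathbb{E}[\Delta]$, the Taylor expansion shows that the DDE's right-hand side is $\alpha_0 u(t) + C\sum_{k=0}^{\infty} \frac{(-1)^k}{k!} m_k u^{(k)}(t-\mathbb{E}[\Delta])$, where $m_k$ is the $k$-th central moment of $\Delta$. The second-derivative approximation keeps the $k=0,1,2$ terms; since $m_1 = 0$, the first term dropped is the $k=3$ term, namely $-\frac{C}{3!} m_3\, u^{(3)}(t-\mathbb{E}[\Delta])$. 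So the coefficient whose magnitude we must maximize is proportional to $|m_3|$, the absolute third central moment of $\Delta$.

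First I would compute $m_3$ explicitly for the two-point distribution. With $\Delta = \Delta_1$ with probability $p$ and $\Delta = \Delta_2$ with probability $1-p$, a direct computation gives $m_3 = \mathbb{E}[(\Delta - \mathbb{E}[\Delta])^3] = p(1-p)(1-2p)(\Delta_2 - \Delta_1)^3$ (up to a sign depending on labeling conventions — the key point is that it factors as $p(1-p)(1-2p)$ times a constant). Then I would set $g(p) = p(1-p)(1-2p)$ and maximize $|g(p)|$ on $[0,1]$. Differentiating, $g(p) = p - 3p^2 + 2p^3$, so $g'(p) = 1 - 6p + 6p^2$, and setting $g'(p) = 0$ yields $p = \frac{6 \pm \sqrt{36-24}}{12} = \frac{1}{2} \pm \frac{1}{\sqrt{12}}$. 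A sign/second-derivative check confirms these are the local extrema of $g$, and by the symmetry $g(1-p) = -g(p)$ they give equal magnitudes, so $|g|$ attains its maximum on $[0,1]$ at exactly these two points (the endpoints and $p=1/2$ give $g=0$).

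There is no real obstacle here — the argument is a routine moment computation followed by single-variable calculus, entirely parallel to the proof of Lemma \ref{max_variance_lemma}. The only point requiring mild care is justifying that "leading-order error term" indeed refers to the $k=3$ term: one must note that $m_2 = \mathrm{Var}(\Delta) = p(1-p)(\Delta_1-\Delta_2)^2$ is generically nonzero, so the $u''$ term is genuinely retained in the approximation and the first omitted term is the $u^{(3)}$ term with coefficient $\tfrac{(-1)^3}{3!} C\, m_3$. One should also remark (as the paper does for the neutral case at $\mathrm{Var}(\Delta)=0$) that at the degenerate values $p\in\{0,1\}$ the distribution is a point mass and the approximation is exact, so those are not maximizers of the error.
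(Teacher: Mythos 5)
Your proposal is correct and follows essentially the same route as the paper: compute the third central moment of the two-point distribution, reduce to maximizing $|p(1-p)(1-2p)|$, and solve $6p^2-6p+1=0$ to obtain $p=\tfrac{1}{2}\pm\tfrac{1}{\sqrt{12}}$, using the antisymmetry about $p=\tfrac12$ to see both roots give equal magnitude. Your added remark that the omitted leading term is the $u^{(3)}$ term (since $m_1=0$ and the $m_2$ term is retained) is a correct clarification of a point the paper glosses over.
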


\begin{proof}

The leading-order term for the second-derivative approximation when $\Delta^* = \mathbb{E}[\Delta]$ is $$\frac{1}{3!} \mathbb{E}[(\Delta^* - \Delta)^3] \overset{\bullet \bullet}{u}(t - \Delta^*).$$ Thus, we want to find the value of $p$ that maximizes the function 

\begin{align*}
f(p) &= \frac{1}{3!} \mathbb{E}[(\Delta^* - \Delta)^3]\\
&=  \frac{1}{3!} \left( p ( [p \Delta_1 + (1-p) \Delta_2] - \Delta_1)^3 + (1-p) ( [p \Delta_1 + (1-p) \Delta_2] - \Delta_2)^3   \right)\\
&= \frac{1}{3!}(\Delta_1 - \Delta_2)^3 \left[ p (1-p)^3 - (1-p)p^3  \right]\\
\end{align*}

\noindent Setting the derivative equal to zero, we have $$f'(p) = \frac{1}{3!}(\Delta_1 - \Delta_2)^3 (6p^2 - 6p +1) = 0$$ so that $p = \frac{1}{2} \pm \frac{1}{\sqrt{12}}$. One can see that $f''(\frac{1}{2} + \frac{1}{\sqrt{12}}) > 0 $ and $f''(\frac{1}{2} - \frac{1}{\sqrt{12}}) < 0 $ so that we have a minimum and a maximum, respectively. Lastly, $f(\frac{1}{2} + \frac{1}{\sqrt{12}}) = - f(\frac{1}{2} - \frac{1}{\sqrt{12}})$.

\end{proof}

As demonstrated in the above lemmas, it is useful for us to understand what the roots are of the derivatives of the central moments of $\Delta$ in order to understand where the central moments, and thus the error terms in our approximations, get maximized in absolute value. Below we write an expression for the $n^{th}$ central moment of $\Delta$, denoted $m_n$, and its derivative.

\begin{align}
m_n &= \mathbb{E}[(\mathbb{E}[\Delta] - \Delta)^n]\\
&= p ([p \Delta_1 + (1-p) \Delta_2  - \Delta_1])^n + (1-p)([p \Delta_1 + (1-p) \Delta_2] - \Delta_2)^n\\
&= p (1-p)^n (\Delta_2 - \Delta_1)^n + (1-p) p^n (\Delta_1 - \Delta_2)^n\\
&=  p (1-p)  [ (1-p)^{n-1}   + (-1)^n p^{n-1}] (\Delta_2 - \Delta_1)^n
\end{align}

\begin{align}
\frac{d m_n}{dp} &= \frac{d}{dp}\left( \mathbb{E}[(\mathbb{E}[\Delta] - \Delta)^n  \right)\\
&= -(\Delta_2 - \Delta_1)^n (n p + p - 1)(1 - p)^{n-1} - (\Delta_1 - \Delta_2)^n (n (p-1) + p) p^{n-1}\\
&= (\Delta_2 - \Delta_1)^n [ (1 - p) - n p](1 - p)^{n-1} - (\Delta_1 - \Delta_2)^n [ p - n(1 - p)   ] p^{n-1}\\
&= (\Delta_2 - \Delta_1)^{n} \bigg[ [ (1 - p) - n p](1 - p)^{n-1} - (-1)^n [ p - n(1 - p)   ] p^{n-1}  \bigg]
\end{align}

It is easy to compute the roots of $m_n$ when $n$ is small, but it becomes more difficult when $n$ is large. This leads us to approximate the roots of the derivative of the $n^{th}$ central moment.

\subsection{Approximate Extreme Points of the $n^{th}$ Central Moment}

Because the error terms of our approximations contain scaled central moments of $\Delta$, it is useful to get an understanding of what the roots of the derivative of the $n^{th}$ central moment are because this gives us an idea of what values of $p$ maximize the error of our approximations. We can immediately see that if $n$ is even, then $p = \frac{1}{2}$ is a root of $\frac{d}{dp}\left( \mathbb{E}[(\mathbb{E}[\Delta] - \Delta)^n  \right)$. The other roots are more complicated, but we can approximate them when $n$ is large. Consider when $p \neq \frac{1}{2}$ so that either $ p > 1- p$ or $p < 1 - p$. 

In the former case, we have that $(1-p)^{n-1}$ goes to zero faster than $p^{n-1}$ does as $n \to \infty$, so we can approximate one of the roots for large $n$ by examining the roots the dominant term. $$- (\Delta_1 - \Delta_2)^n [ p - n(1 - p)   ] p^{n-1} = 0$$ We see that this equation is satisfied when $$ p - n(1-p) = 0$$ so that $$p = \frac{n}{n+1}.$$ 

\noindent Similarly, if we consider the latter case, the other term is the dominant term for large $n$. $$(\Delta_2 - \Delta_1)^n [ (1 - p) - n p](1 - p)^{n-1} = 0$$ is also satisfied when $$(1 - p) - np = 0$$ so that $$p = \frac{1}{n+1}.$$ Thus, we have that $p = \frac{1}{n+1}$ and $p = \frac{n}{n+1}$ approximate some of the roots of $\frac{d}{dp}\left( \mathbb{E}[(\mathbb{E}[\Delta] - \Delta)^n  \right)$. On the left side of Figure \ref{central_moments_plot} we plot several central moments and we see that they are maximized in magnitude close to $p = \frac{1}{n+1}$ and $p = \frac{n}{n+1}$. However, the leading-order error term of the approximation to the multi-delay system that keeps the first $n-1$ derivatives is $$\frac{(-1)^n}{n!} \mathbb{E}[(\mathbb{E}[\Delta] - \Delta)^n].$$ We plot these leading-order error terms for various values of $n$ on the right side in Figure \ref{central_moments_plot} and we see that the factor of $\frac{1}{n!}$ makes most of these terms quite small so that only examining the error terms up to $n=4$ might suffice in practice. Coincidentally, it turns out that the third and fourth central moments are each maximized in magnitude at $ p = \frac{1}{2} \pm \frac{1}{\sqrt{12}}$. This fact would lead us to believe that the second-derivative approximation would do most poorly around $p = \frac{1}{2} - \frac{1}{\sqrt{12}} \approx .2113$ and $p = \frac{1}{2} + \frac{1}{\sqrt{12}} \approx .7887$. Indeed, the information in Table \ref{Table_Accuracy_2nd} seems to suggest that the second-derivative approximation does the worst somewhat close to these values of $p$. We take a closer look at the accuracy of the second-derivative approximation near one of these values and collect this information in the table in Figure \ref{Table_20to35} to get a better idea of what is going on.

From the information in the table in Figure \ref{Table_20to35}, we see that the second-derivative approximation actually did the worst close to $p = .27$. While this is not precisely the value of $p$ at which the leading-order error terms (namely, those corresponding to $n=3$ and $n=4$) are maximized, it is fairly close by and differs due to the little influence that the higher-ordered error terms have. While it is not completely accurate, determining where the leading-ordered error term is maximized is still a good way of approximating what values of $p$ the approximation will be the least accurate at.

%
%


\begin{figure}
\includegraphics[scale=.43]{./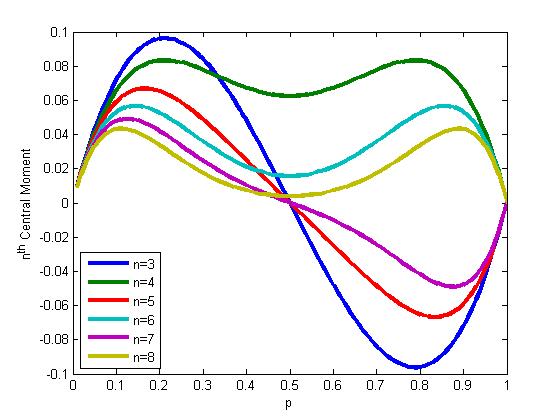}\includegraphics[scale=.43]{./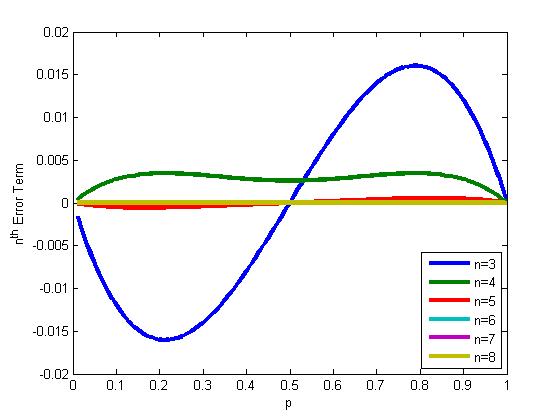}
\caption{Left: Various central moments for $\Delta$ are plotted against $p$ with $\Delta_1 = 1.4$ and $\Delta_2 = .4$. Right: Various error terms in the Taylor expansion are plotted against $p$ with $\Delta_1 = 1.4$ and $\Delta_2 = .4$.}
\label{central_moments_plot}
\end{figure}

\begin{figure}[!hb]
\begin{floatrow}
\ffigbox{%
\hspace{-20mm}
\begin{tabular}{| l | l | l | l | l | l |}
\hline\\ [-2.5ex]
$p$ &  Second Derivative (Mean) \\
\hline
.20   & 96.83\%\\
\hline
.21 &  96.22\% \\
\hline
.22 &  95.68 \%\\
\hline
.23 &   95.21\%\\
\hline
.24 &   94.92\%\\
\hline
.25 &   94.69\%\\
\hline
.26 &   94.60\%\\
\hline
.27 &   94.55\%\\
\hline
.28 &   94.66\%\\
\hline
.29 &   94.90\%\\
\hline
.30 &  95.06\% \\
\hline
.31 &  95.65\% \\
\hline
.32 &  96.24\% \\
\hline
.33 &  96.65\% \\
\hline
.34 &  96.98\% \\
\hline
.35 &  97.31\% \\
\hline
\end{tabular}%
}{
}
\capbtabbox{%
\hspace{-21mm}
  \includegraphics[scale=.55]{./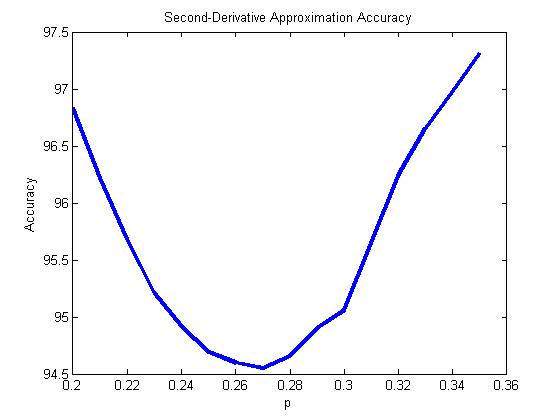}
}{}%
\end{floatrow}%
\caption{The table (left) contains the percentage of points in the second-derivative approximation scatterplots that correctly matched those in the scatterplots corresponding to the multi-delay system with $m = 2$ for various values of $p$ close to $p = \frac{1}{2} - \frac{1}{\sqrt{12}}$, one of the points where the third and fourth central moments are maximized, with $\alpha_0 = -1$ and $C = -5$. The data from this table is plotted here (right) to better visualize how the accuracy of the second-derivative approximation varies with $p$ in the two-delay setting} 
\label{Table_20to35}
\end{figure}

\begin{figure}
\end{figure}

\section{Numerical Examples}
\label{Section5}

\subsection{Comparing Choices of $\Delta^*$ in Neutral Approximation}
\label{subsection_51}

Recall that earlier we approximated a multi-delay DDE with a neutral equation that explicitly depends on only a single constant delay, $\Delta^*$. Keeping in mind that $\Delta^*$ can itself depend on any of the delays present in the multi-delay DDE, earlier we explored how making different choices of $\Delta^*$ caused our neutral equation to approximate where the change in stability occurs in the multi-delay system with varying degrees of accuracy. In this section we provide some visual results from some numerical examples we looked at for the two-delay system that we discussed in Section \ref{subsection31}. More specifically, we provide scatterplots showing where the change in stability occurs in the $\Delta_1$-$\Delta_2$ plane for several choices of $\Delta^*$ and values of $p$. The points in the $\Delta_1$-$\Delta_2$ plane shaded green are points where the system is stable in the sense that the amplitudes of solutions decay as they approach an equilibrium whereas those shaded red are points where the system is unstable in the sense that the amplitudes of solutions increase and approach limit cycles. 

Below, each column of scatterplots corresponds to a specific value of $p$ and each of the first four rows corresponds to a choice of $\Delta^*$ in the neutral equation. In the first row of scatterplots, we consider the choice $\Delta^* = \Delta_1$ which is the delay in the two-delay system that occurs with probability $p$. In the second row we make the choice $\Delta^* = \Delta_2$ which is the delay in the two-delay system that occurs with probability $1 - p$. In the third row, we use a midpoint approximation where we let $\Delta^* = \frac{\Delta_1 + \Delta_2}{2}$. In the fourth row, we use the mean of the discrete random variable $\Delta$ by setting $\Delta^* = \mathbb{E}[\Delta] = p \Delta_1 + (1-p) \Delta_2$. The final row corresponds to the actual multi-delay system (which is only a two-delay system in this case) that we are trying to approximate with our neutral system.


\begin{figure}
     \begin{subfigure}[b]{0.3\textwidth}
       \includegraphics[width=\textwidth]{./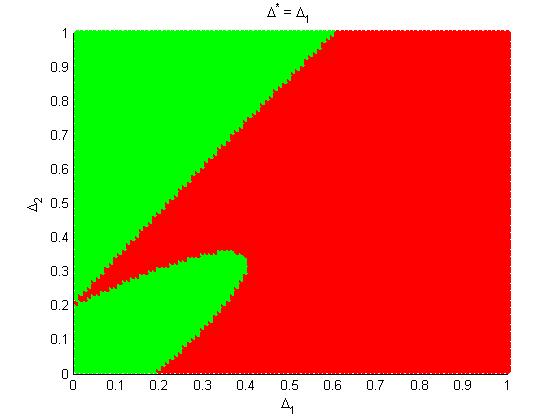}
         \caption{$\Delta^* = \Delta_1, p=.01$}
         \includegraphics[width=\textwidth]{./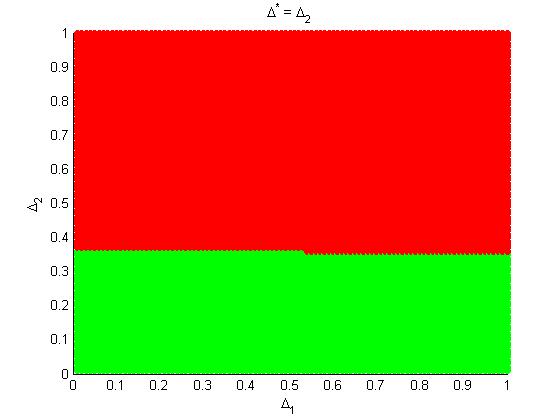}
        \caption{$\Delta^* = \Delta_2, p=.01$}
         \includegraphics[width=\textwidth]{./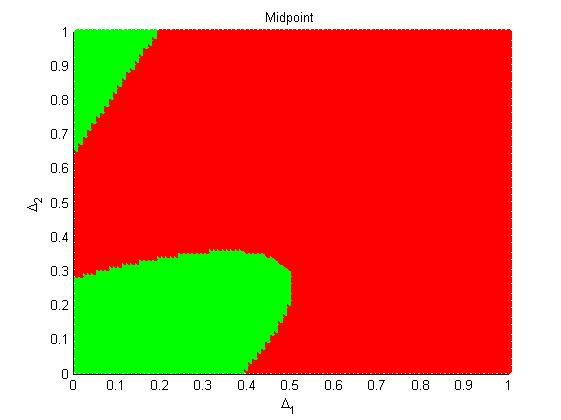}
        \caption{$\Delta^* = \frac{\Delta_1 + \Delta_2}{2}, p=.01$}
         \includegraphics[width=\textwidth]{./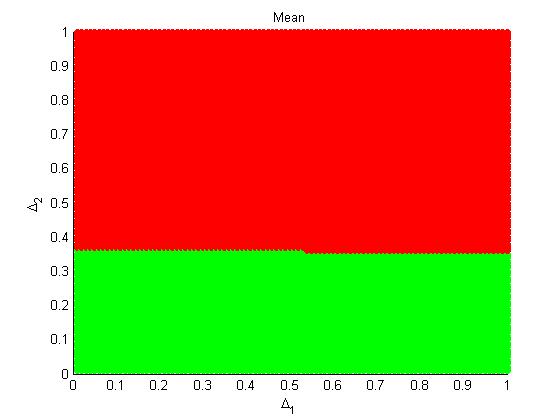}
        \caption{$\Delta^* = \mathbb{E}[\Delta], p=.01$}
         \includegraphics[width=\textwidth]{./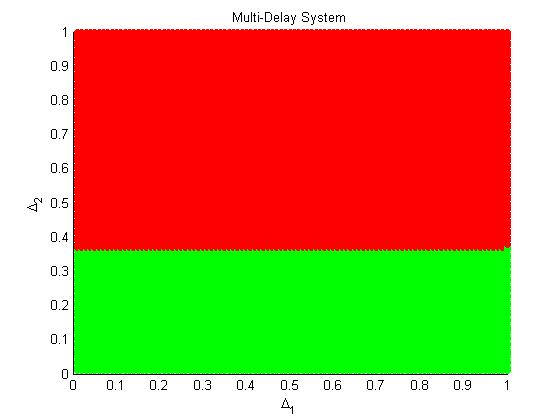}
        \caption{Multi-Delay, $p = .01$}
     \end{subfigure}
\hspace*{\fill}
     \begin{subfigure}[b]{0.3\textwidth}
       \includegraphics[width=\textwidth]{./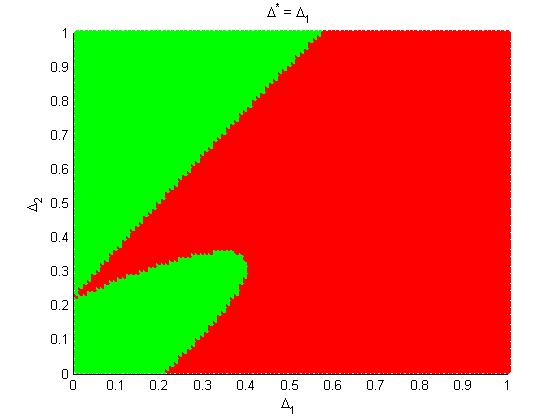}
         \caption{$\Delta^* = \Delta_1, p=.1$}
         \includegraphics[width=\textwidth]{./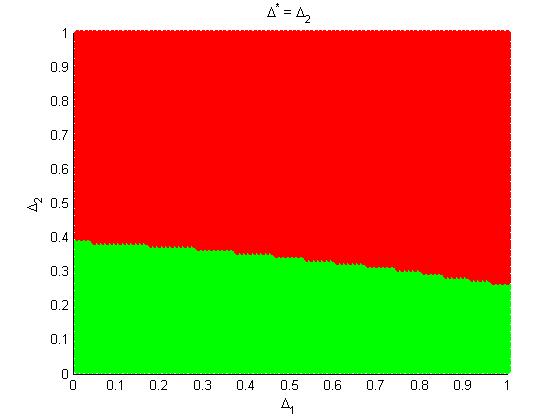}
        \caption{$\Delta^* = \Delta_2, p=.1$}
         \includegraphics[width=\textwidth]{./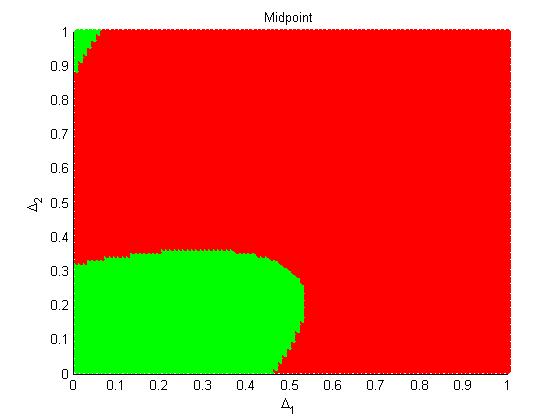}
        \caption{$\Delta^* = \frac{\Delta_1 + \Delta_2}{2}, p=.1$}
         \includegraphics[width=\textwidth]{./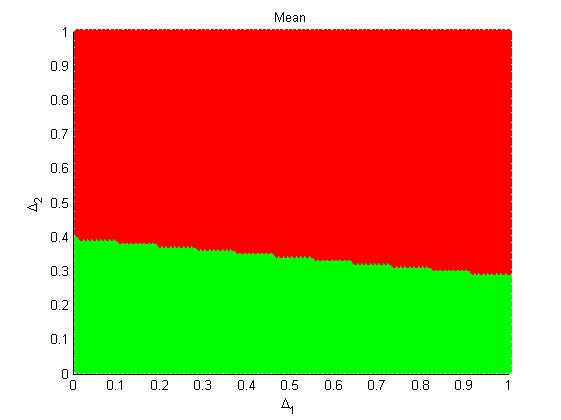}
        \caption{$\Delta^* = \mathbb{E}[\Delta], p=.1$}
         \includegraphics[width=\textwidth]{./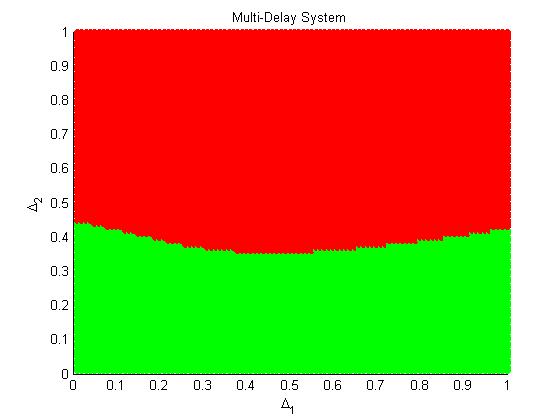}
        \caption{Multi-Delay, $p = .1$}
     \end{subfigure}
\hspace*{\fill}
     \begin{subfigure}[b]{0.3\textwidth}
       \includegraphics[width=\textwidth]{./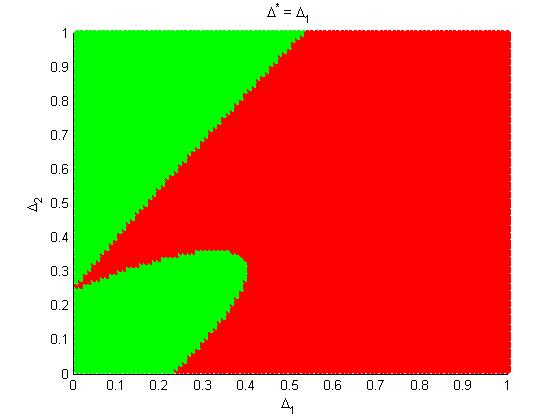}
         \caption{$\Delta^* = \Delta_1, p=.2$}
         \includegraphics[width=\textwidth]{./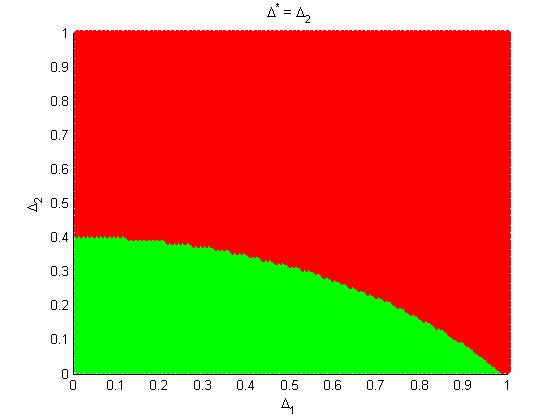}
        \caption{$\Delta^* = \Delta_2, p=.2$}
         \includegraphics[width=\textwidth]{./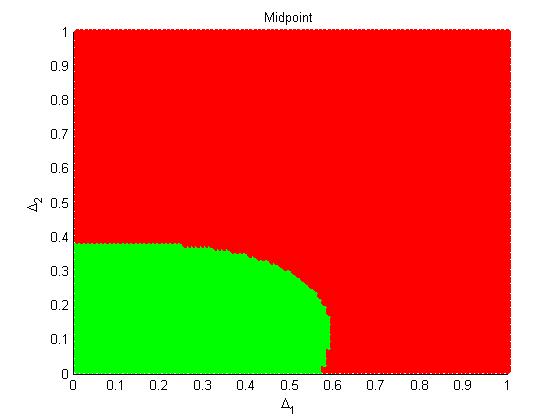}
        \caption{$\Delta^* = \frac{\Delta_1 + \Delta_2}{2}, p=.2$}
         \includegraphics[width=\textwidth]{./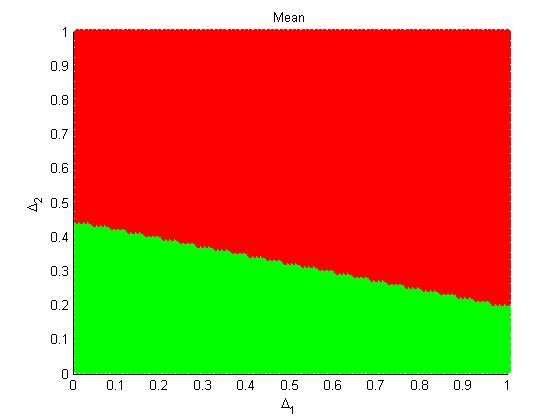}
        \caption{$\Delta^* = \mathbb{E}[\Delta], p=.2$}
         \includegraphics[width=\textwidth]{./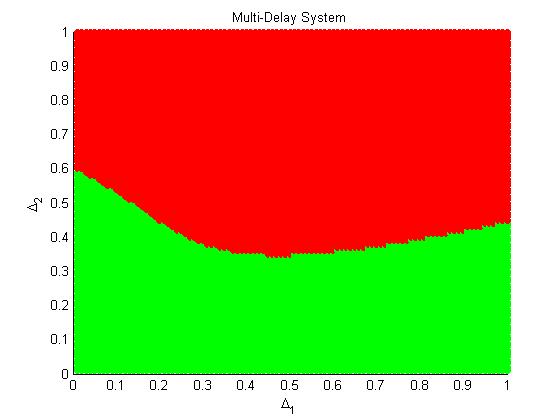}
        \caption{Multi-Delay, $p = .2$}
     \end{subfigure}
\end{figure}


\begin{figure}
     \begin{subfigure}[b]{0.3\textwidth}
       \includegraphics[width=\textwidth]{./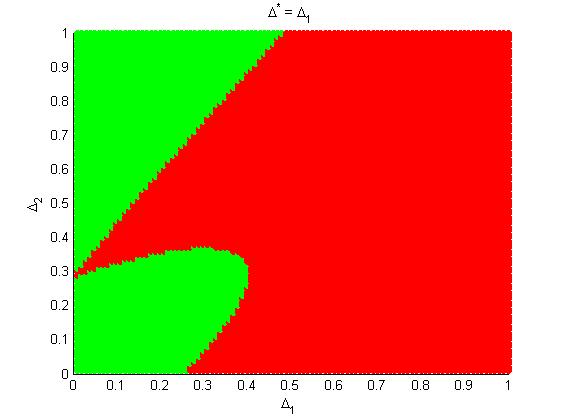}
         \caption{$\Delta^* = \Delta_1, p=.3$}
         \includegraphics[width=\textwidth]{./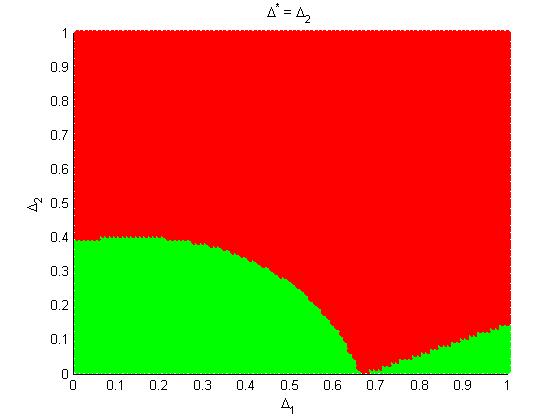}
        \caption{$\Delta^* = \Delta_2, p=.3$}
         \includegraphics[width=\textwidth]{./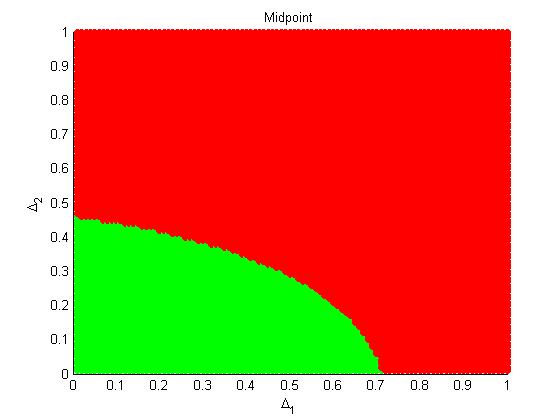}
        \caption{$\Delta^* = \frac{\Delta_1 + \Delta_2}{2}, p=.3$}
         \includegraphics[width=\textwidth]{./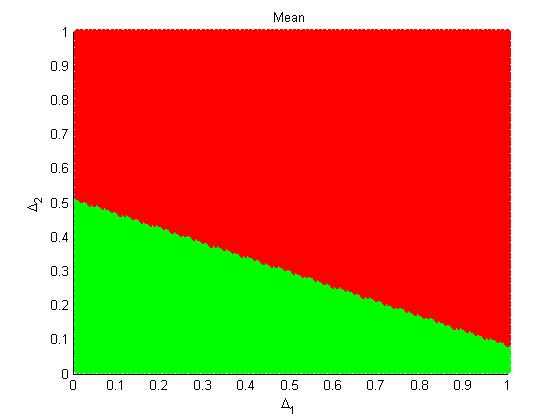}
        \caption{$\Delta^* = \mathbb{E}[\Delta], p=.3$}
         \includegraphics[width=\textwidth]{./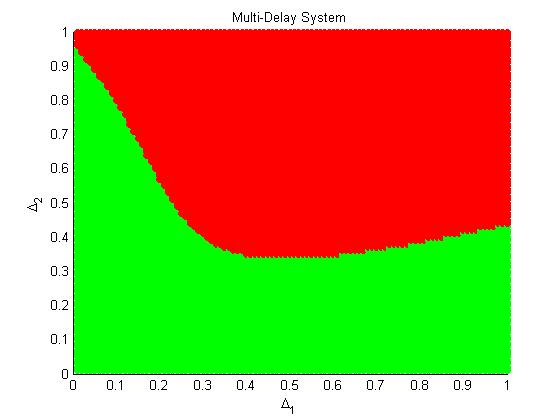}
        \caption{Multi-Delay, $p = .3$}
     \end{subfigure}
\hspace*{\fill}
     \begin{subfigure}[b]{0.3\textwidth}
       \includegraphics[width=\textwidth]{./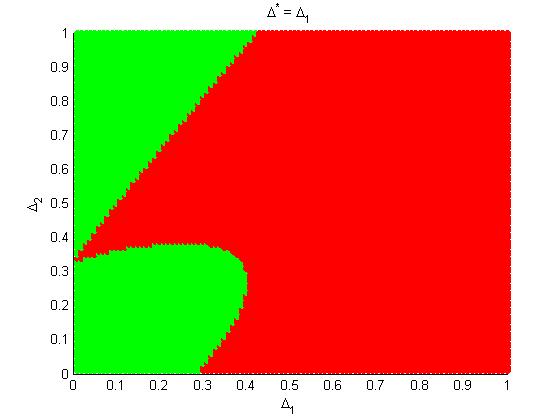}
         \caption{$\Delta^* = \Delta_1, p=.4$}
         \includegraphics[width=\textwidth]{./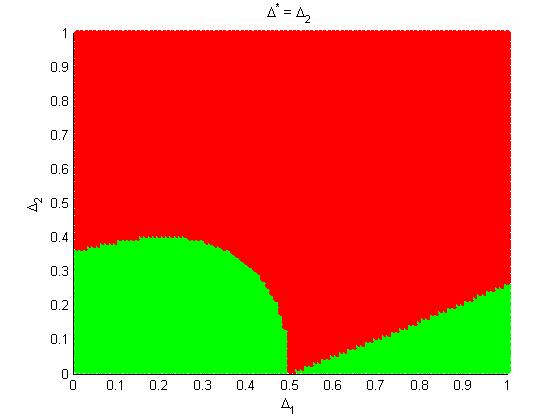}
        \caption{$\Delta^* = \Delta_2, p=.4$}
         \includegraphics[width=\textwidth]{./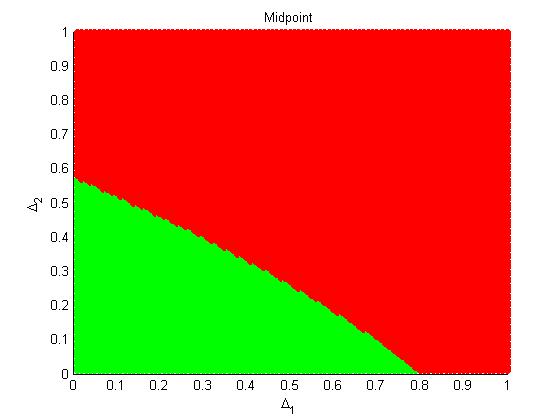}
        \caption{$\Delta^* = \frac{\Delta_1 + \Delta_2}{2}, p=.4$}
         \includegraphics[width=\textwidth]{./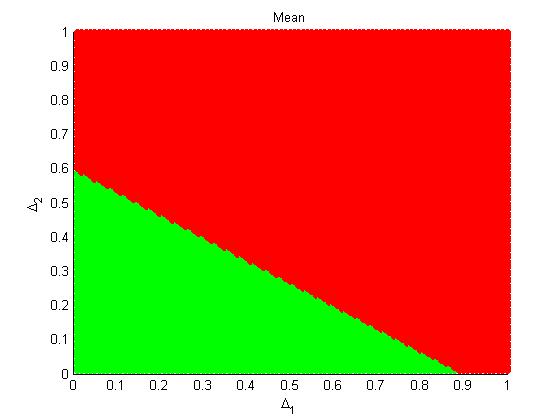}
        \caption{$\Delta^* = \mathbb{E}[\Delta], p=.4$}
         \includegraphics[width=\textwidth]{./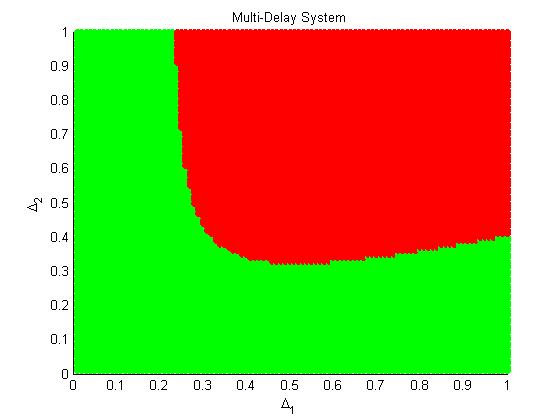}
        \caption{Multi-Delay, $p = .4$}
     \end{subfigure}
\hspace*{\fill}
     \begin{subfigure}[b]{0.3\textwidth}
       \includegraphics[width=\textwidth]{./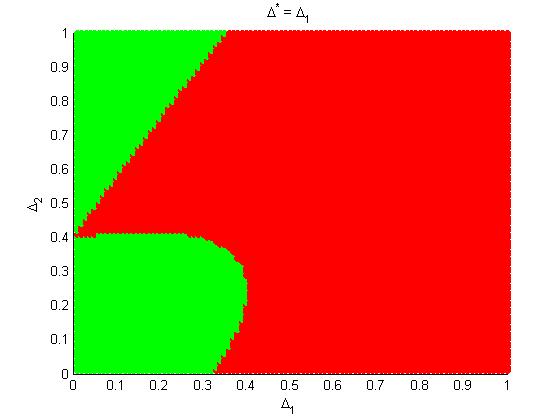}
         \caption{$\Delta^* = \Delta_1, p=.5$}
         \includegraphics[width=\textwidth]{./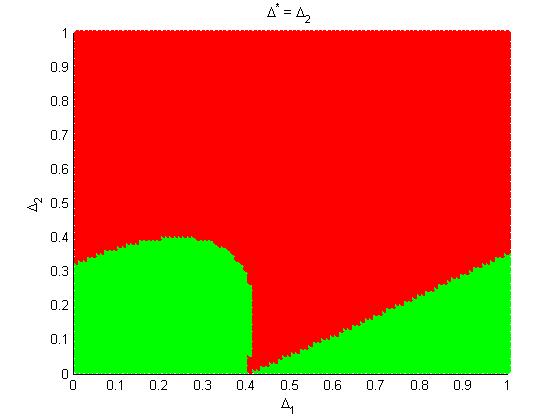}
        \caption{$\Delta^* = \Delta_2, p=.5$}
         \includegraphics[width=\textwidth]{./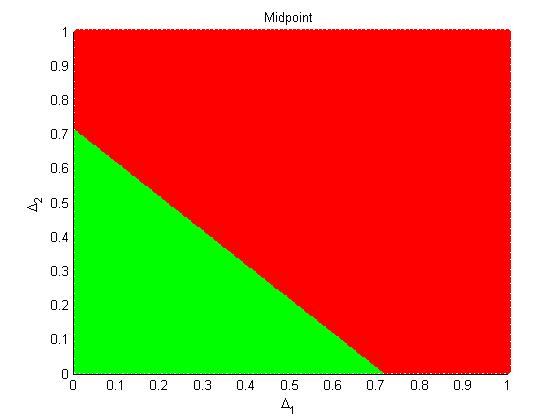}
        \caption{$\Delta^* = \frac{\Delta_1 + \Delta_2}{2}, p=.5$}
         \includegraphics[width=\textwidth]{./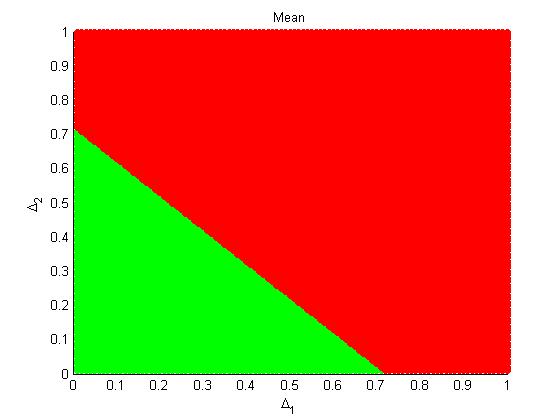}
        \caption{$\Delta^* = \mathbb{E}[\Delta], p=.5$}
         \includegraphics[width=\textwidth]{./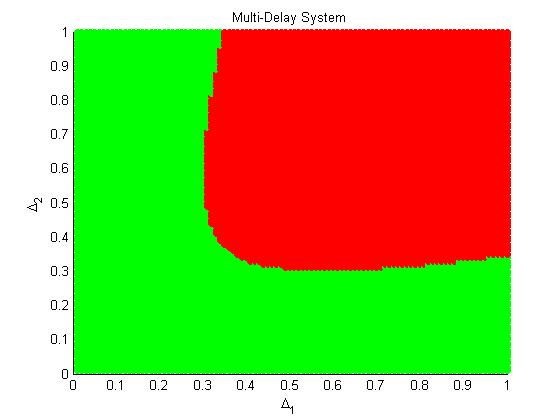}
        \caption{Multi-Delay, $p = .5$}
     \end{subfigure}
\end{figure}


\begin{figure}
     \begin{subfigure}[b]{0.3\textwidth}
       \includegraphics[width=\textwidth]{./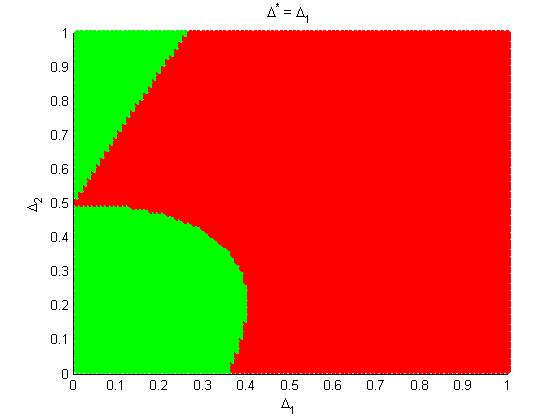}
         \caption{$\Delta^* = \Delta_1, p=.6$}
         \includegraphics[width=\textwidth]{./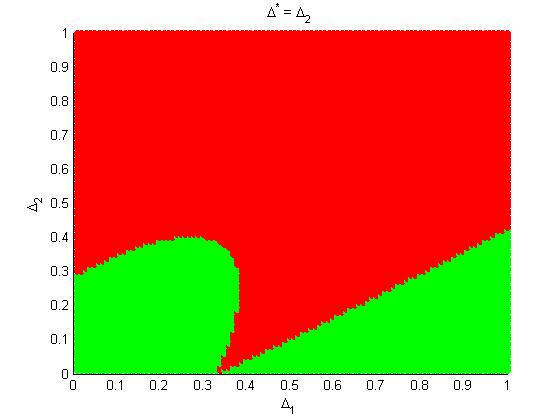}
        \caption{$\Delta^* = \Delta_2, p=.6$}
         \includegraphics[width=\textwidth]{./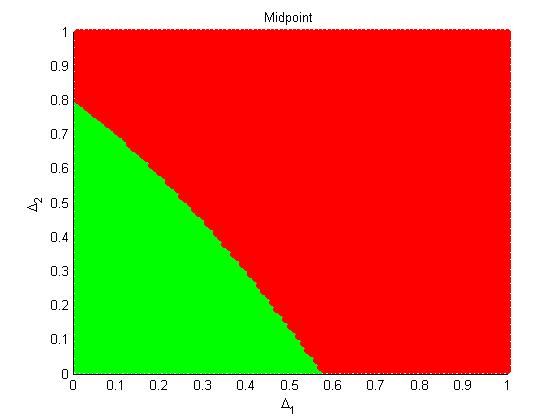}
        \caption{$\Delta^* = \frac{\Delta_1 + \Delta_2}{2}, p=.6$}
         \includegraphics[width=\textwidth]{./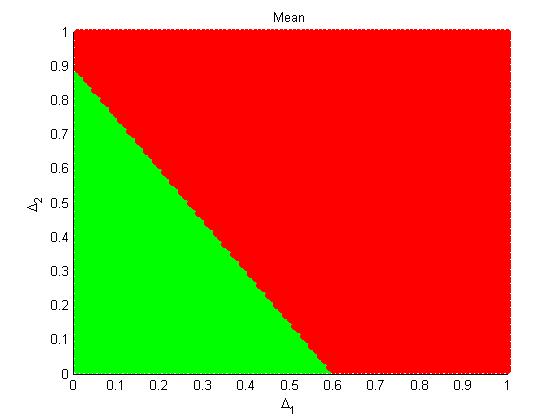}
        \caption{$\Delta^* = \mathbb{E}[\Delta], p=.6$}
         \includegraphics[width=\textwidth]{./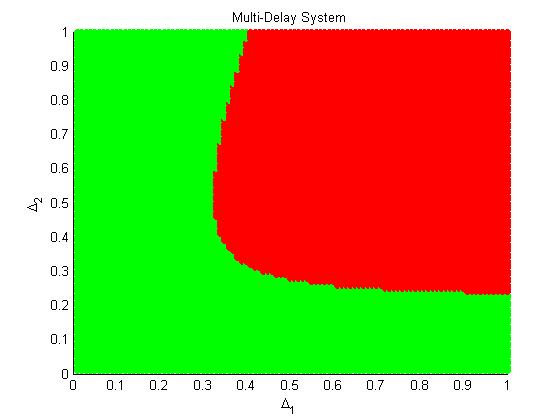}
        \caption{Multi-Delay, $p = .6$}
     \end{subfigure}
\hspace*{\fill}
     \begin{subfigure}[b]{0.3\textwidth}
       \includegraphics[width=\textwidth]{./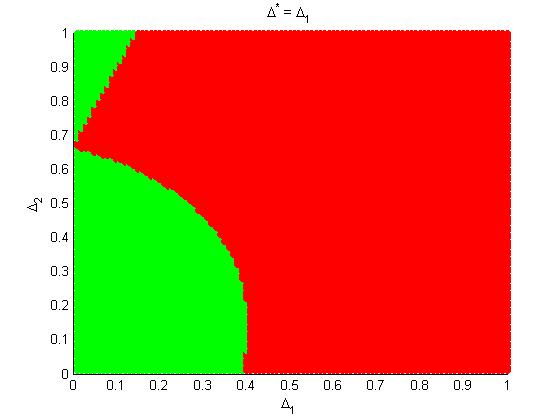}
         \caption{$\Delta^* = \Delta_1, p=.7$}
         \includegraphics[width=\textwidth]{./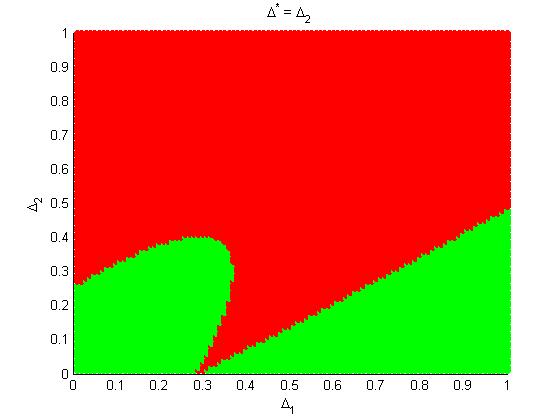}
        \caption{$\Delta^* = \Delta_2, p=.7$}
         \includegraphics[width=\textwidth]{./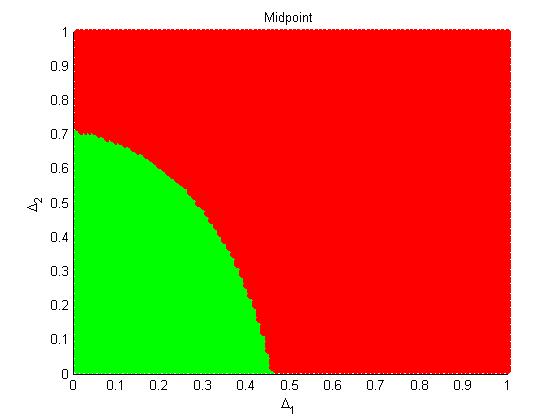}
        \caption{$\Delta^* = \frac{\Delta_1 + \Delta_2}{2}, p=.7$}
         \includegraphics[width=\textwidth]{./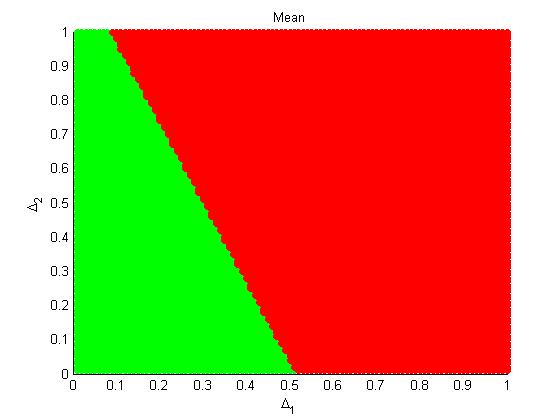}
        \caption{$\Delta^* = \mathbb{E}[\Delta], p=.7$}
         \includegraphics[width=\textwidth]{./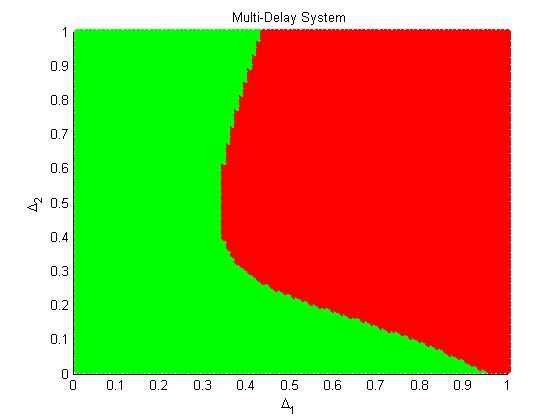}
        \caption{Multi-Delay, $p = .7$}
     \end{subfigure}
\hspace*{\fill}
     \begin{subfigure}[b]{0.3\textwidth}
       \includegraphics[width=\textwidth]{./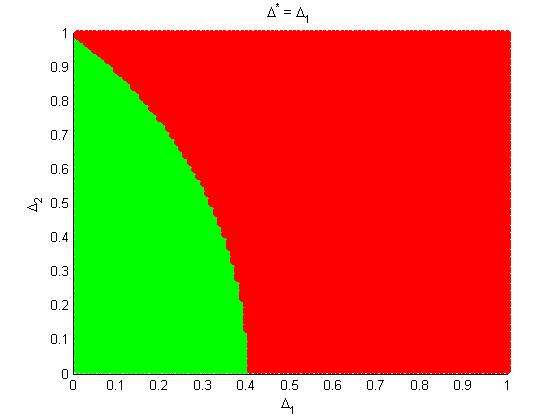}
         \caption{$\Delta^* = \Delta_1, p=.8$}
         \includegraphics[width=\textwidth]{./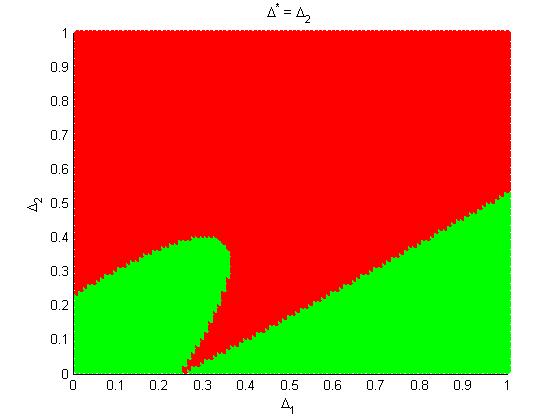}
        \caption{$\Delta^* = \Delta_2, p=.8$}
         \includegraphics[width=\textwidth]{./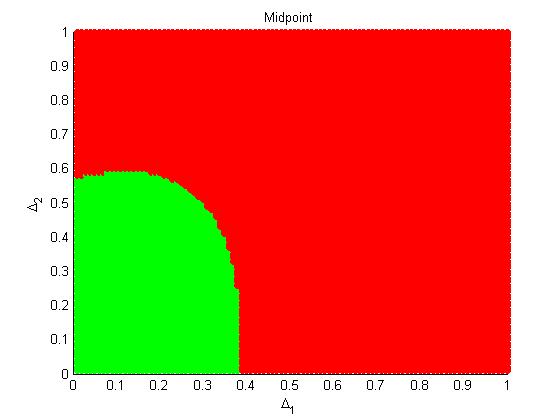}
        \caption{$\Delta^* = \frac{\Delta_1 + \Delta_2}{2}, p=.8$}
         \includegraphics[width=\textwidth]{./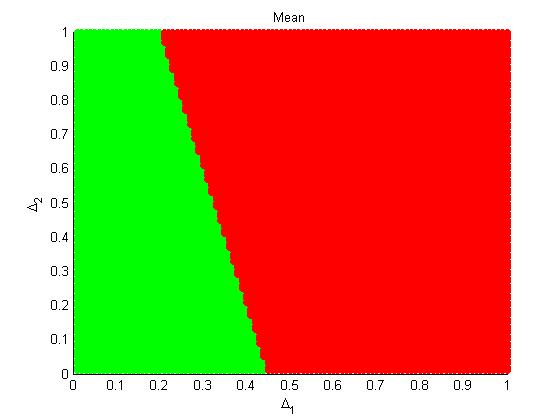}
        \caption{$\Delta^* = \mathbb{E}[\Delta], p=.8$}
         \includegraphics[width=\textwidth]{./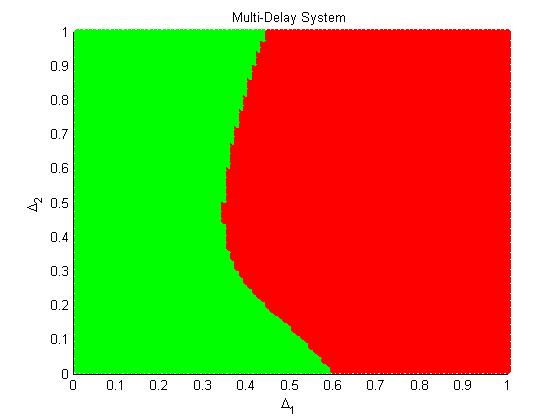}
        \caption{Multi-Delay, $p = .8$}
     \end{subfigure}
\end{figure}


\begin{figure}
\hspace{0mm}
     \begin{subfigure}[b]{0.3\textwidth}
       \includegraphics[width=\textwidth]{./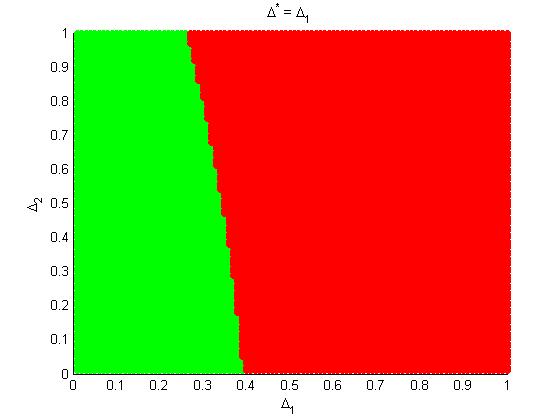}
         \caption{$\Delta^* = \Delta_1, p=.9$}
         \includegraphics[width=\textwidth]{./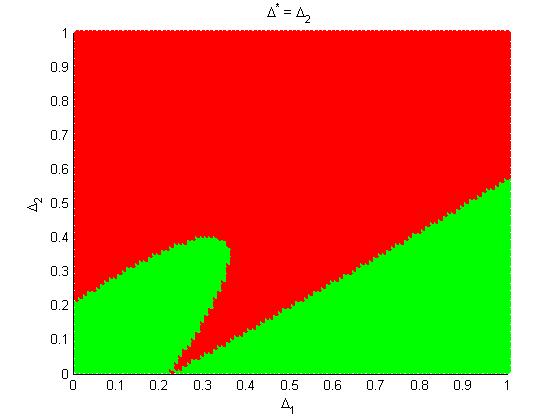}
        \caption{$\Delta^* = \Delta_2, p=.9$}
         \includegraphics[width=\textwidth]{./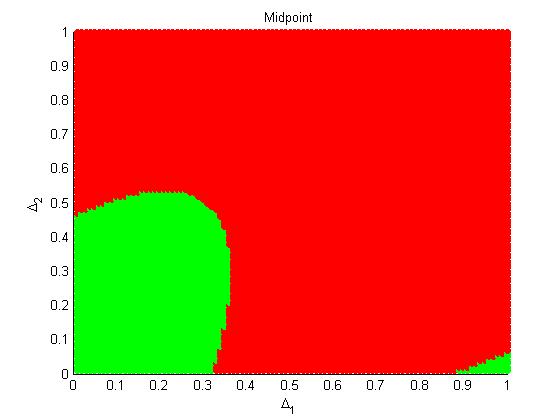}
        \caption{$\Delta^* = \frac{\Delta_1 + \Delta_2}{2}, p=.9$}
         \includegraphics[width=\textwidth]{./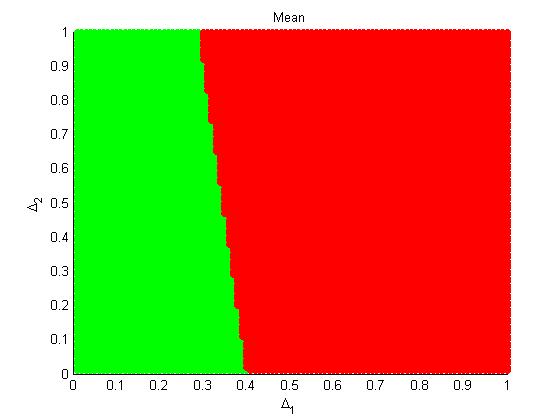}
        \caption{$\Delta^* = \mathbb{E}[\Delta], p=.9$}
         \includegraphics[width=\textwidth]{./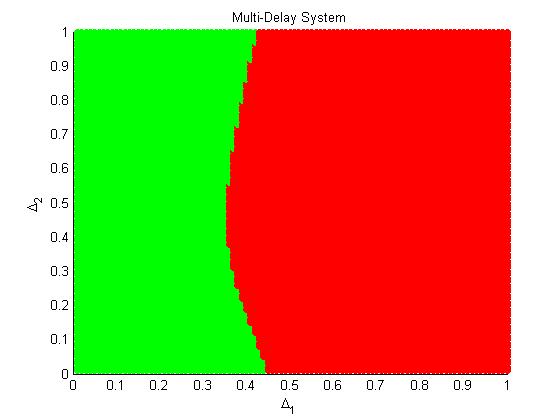}
        \caption{Multi-Delay, $p = .9$}
     \end{subfigure}
\hspace{10mm}
     \begin{subfigure}[b]{0.3\textwidth}
       \includegraphics[width=\textwidth]{./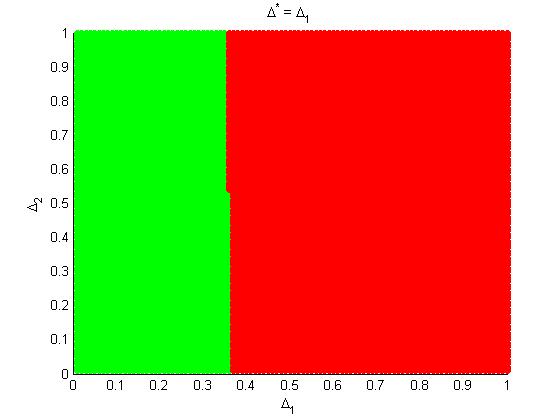}
         \caption{$\Delta^* = \Delta_1, p=.99$}
         \includegraphics[width=\textwidth]{./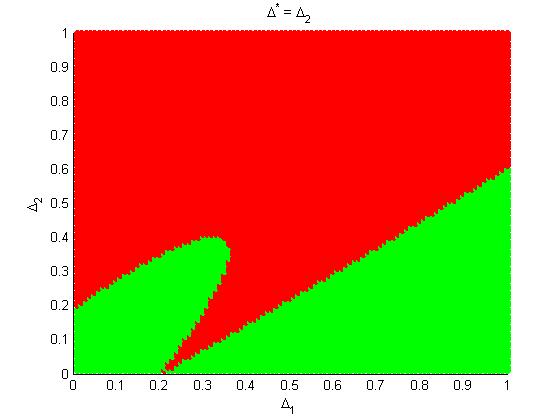}
        \caption{$\Delta^* = \Delta_2, p=.99$}
         \includegraphics[width=\textwidth]{./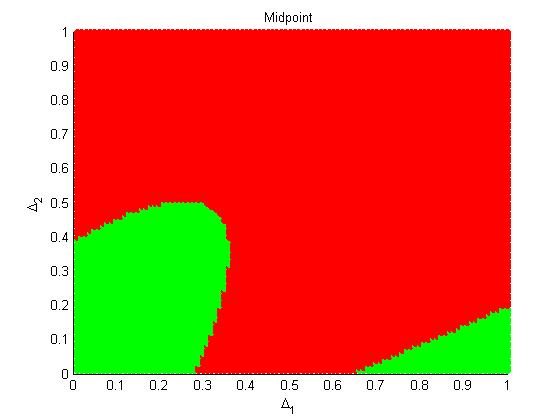}
        \caption{$\Delta^* = \frac{\Delta_1 + \Delta_2}{2}, p=.99$}
         \includegraphics[width=\textwidth]{./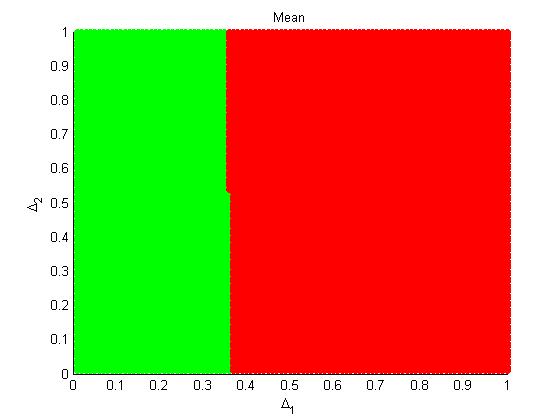}
        \caption{$\Delta^* = \mathbb{E}[\Delta], p=.99$}
         \includegraphics[width=\textwidth]{./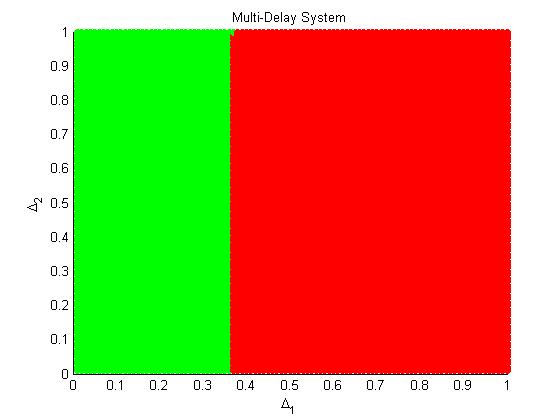}
        \caption{Multi-Delay, $p = .99$}
     \end{subfigure}
\end{figure}


\subsection{Comparing Neutral and Second-Derivative Approximtions}
\label{subsection_52}

Earlier we saw that the various choices of $\Delta^*$ in the neutral approximation affected how accurately the neutral system approximated the multi-delay system. In addition, we saw that choosing $\Delta^* = \mathbb{E}[\Delta]$ gave the neutral approximation the best accuracy in approximating the multi-delay system, which we explained by noting that this choice of $\Delta^*$ minimizes the magnitude of the leading-order error term of the Taylor expansion (essentially just using the fact that the mean minimizes the $L^2$-norm). The good performance of the mean in the neutral setting led us to use this same choice of $\Delta^*$ in the second-derivative approximation. We also saw that the neutral approximation with $\Delta^* = \mathbb{E}[\Delta]$ of the two-delay system does poorly near $p = \frac{1}{2}$ due to the leading-order error term in the Taylor expansion being maximized for this value of $p$. We saw numerically that the accuracy of the neutral approximation would get as low as about $72 \%$ in the two-delay setting. This motivated us to keep the second derivative term to improve upon the neutral approximation as we discussed in Section \ref{subsection_32}. We saw that this led to a significant improvement in the approximation as the second-derivative approximation's accuracy was never worse than $94 \%$ across all of the values of $p$ that we tested. 

In this section we supply more visual aids to seeing how well the neutral and second-derivative approximations estimate where in the $\Delta_1$-$\Delta_2$ plane the change in stability occurs in the two-delay system by showing more scatterplots. Below we have several rows of scatterplots, where each row corresponds to a specific value of $p$ and contains three scatterplots. The first scatterplot in each row compares the stability of the neutral approximation to the stability of the multi-delay system, the second scatterplot in each row compares the stability of the second-derivative approximation to the stability of the multi-delay system, and the third scatterplot shows the stability of the multi-delay system alone. For the first two scatterplots in each row, the points in the $\Delta_1$-$\Delta_2$ plane shaded green represent points where both the approximation and the multi-delay system are stable, points shaded red correspond to points where both the approximation and the multi-delay system are unstable, points shaded yellow represent points where the approximation is unstable, but the multi-delay system is stable, and points shaded blue correspond to points where the multi-delay system is unstable, but the approximation being used is stable. From this, we can observe that the region of instability for the multi-delay system appears to be a subset of the region of instability for the neutral approximation. On the other hand, the region of instability of the second-derivative approximation appears to almost be a subset of the region of instability of the multi-delay system, but we can see that it is not quite a true subset for most values of $p$.

\begin{figure}
     \begin{subfigure}[b]{0.3\textwidth}
         \centering
       \includegraphics[width=\textwidth]{./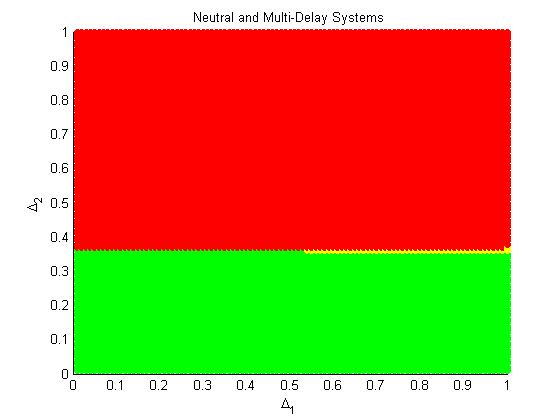}
         \caption{Neutral, $p = .01$}
     \end{subfigure}
     ~\hspace{-.01in}~
     \begin{subfigure}[b]{0.3\textwidth}
         \centering
         \includegraphics[width=\textwidth]{./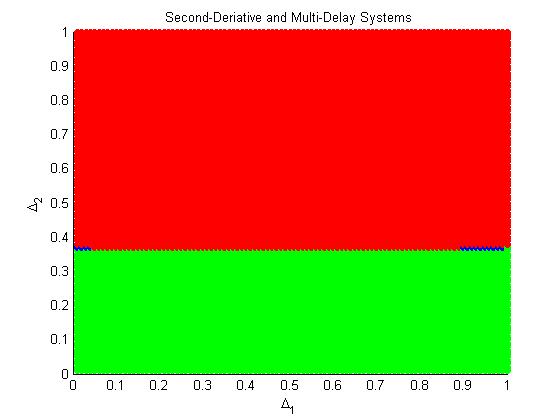}
          \centering
        \caption{$2^{nd}$-Derivative, $p = .01$}
     \end{subfigure}
     ~\hspace{-.01in}~
     \begin{subfigure}[b]{0.3\textwidth}
         \centering
         \includegraphics[width=\textwidth]{./Figures/multidelay/p01/multi.jpg}
         \caption{Multi-Delay, $p = .01$}
     \end{subfigure}
\end{figure}

\begin{figure}
      \centering
     \begin{subfigure}[b]{0.3\textwidth}
         \centering
       \includegraphics[width=\textwidth]{./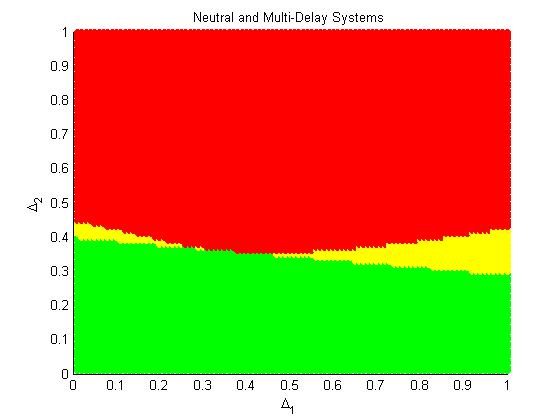}
         \caption{Neutral, $p = .1$}
     \end{subfigure}
     ~\hspace{-.01in}~
     \begin{subfigure}[b]{0.3\textwidth}
         \centering
         \includegraphics[width=\textwidth]{./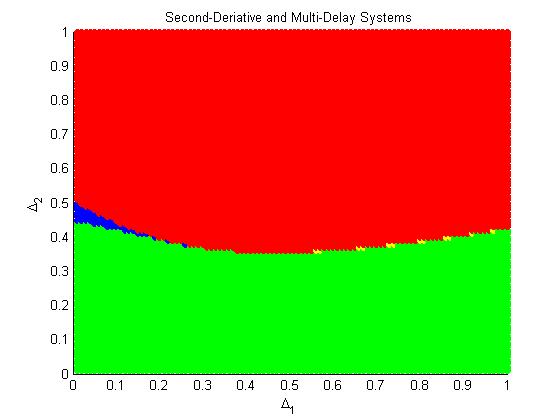}
          \centering
        \caption{$2^{nd}$-Derivative, $p = .1$}
     \end{subfigure}
     ~\hspace{-.01in}~
     \begin{subfigure}[b]{0.3\textwidth}
         \centering
         \includegraphics[width=\textwidth]{./Figures/multidelay/p10/multi.jpg}
         \caption{Multi-Delay, $p = .1$}
     \end{subfigure}

     \begin{subfigure}[b]{0.3\textwidth}
         \centering
       \includegraphics[width=\textwidth]{./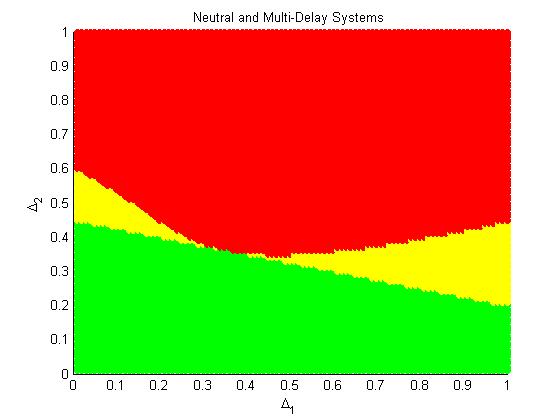}
         \caption{Neutral, $p = .2$}
     \end{subfigure}
     ~\hspace{-.01in}~
     \begin{subfigure}[b]{0.3\textwidth}
         \centering
         \includegraphics[width=\textwidth]{./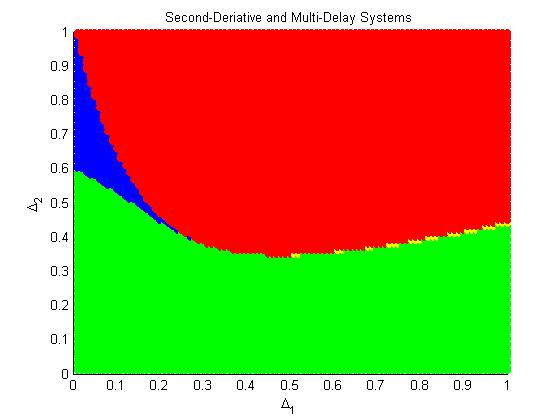}
          \centering
        \caption{$2^{nd}$-Derivative, $p = .2$}
     \end{subfigure}
     ~\hspace{-.01in}~
     \begin{subfigure}[b]{0.3\textwidth}
         \centering
         \includegraphics[width=\textwidth]{./Figures/multidelay/p20/multi.jpg}
         \caption{Multi-Delay, $p = .2$}
     \end{subfigure}
     ~\hspace{-.01in}~
\begin{subfigure}[b]{0.3\textwidth}
         \centering
       \includegraphics[width=\textwidth]{./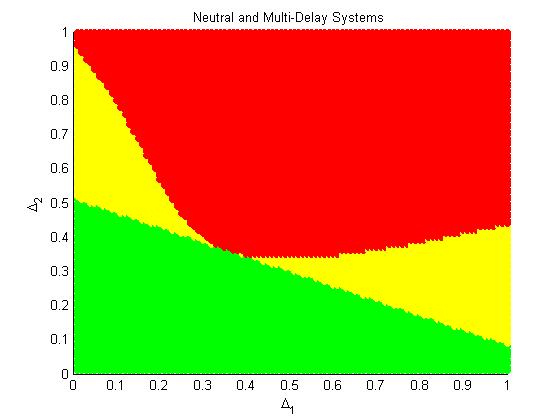}
         \caption{Neutral, $p = .3$}
     \end{subfigure}
     ~\hspace{-.01in}~
     \begin{subfigure}[b]{0.3\textwidth}
         \centering
         \includegraphics[width=\textwidth]{./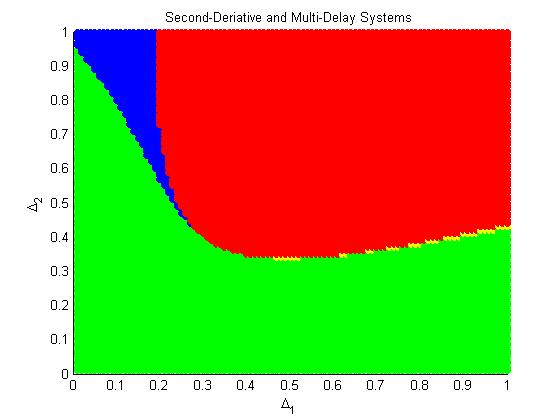}
          \centering
        \caption{$2^{nd}$-Derivative, $p = .3$}
     \end{subfigure}
     ~\hspace{-.01in}~
     \begin{subfigure}[b]{0.3\textwidth}
         \centering
         \includegraphics[width=\textwidth]{./Figures/multidelay/p30/multi.jpg}
         \caption{Multi-Delay, $p = .3$}
     \end{subfigure}
     ~\hspace{-.01in}~
\begin{subfigure}[b]{0.3\textwidth}
         \centering
       \includegraphics[width=\textwidth]{./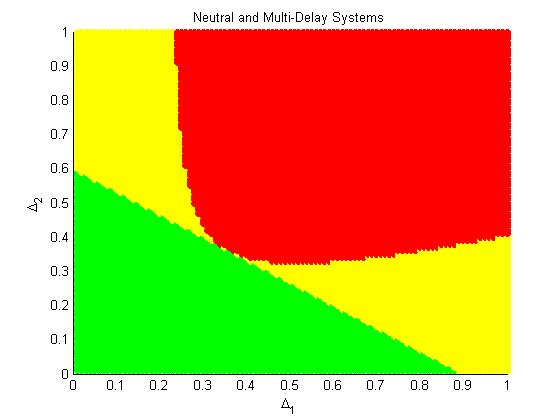}
         \caption{Neutral, $p = .4$}
     \end{subfigure}
     ~\hspace{-.01in}~
     \begin{subfigure}[b]{0.3\textwidth}
         \centering
         \includegraphics[width=\textwidth]{./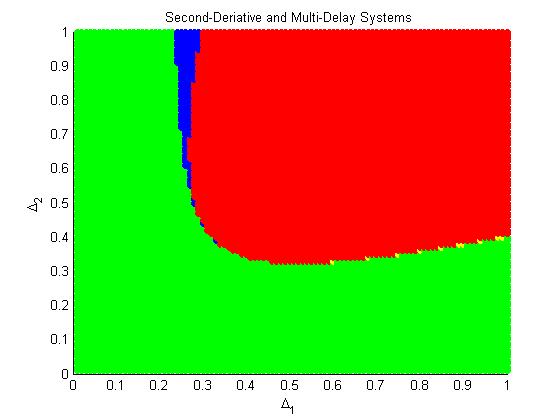}
          \centering
        \caption{$2^{nd}$-Derivative, $p = .4$}
     \end{subfigure}
     ~\hspace{-.01in}~
     \begin{subfigure}[b]{0.3\textwidth}
         \centering
         \includegraphics[width=\textwidth]{./Figures/multidelay/p40/multi.jpg}
         \caption{Multi-Delay, $p = .4$}
     \end{subfigure}
     \begin{subfigure}[b]{0.3\textwidth}
         \centering
       \includegraphics[width=\textwidth]{./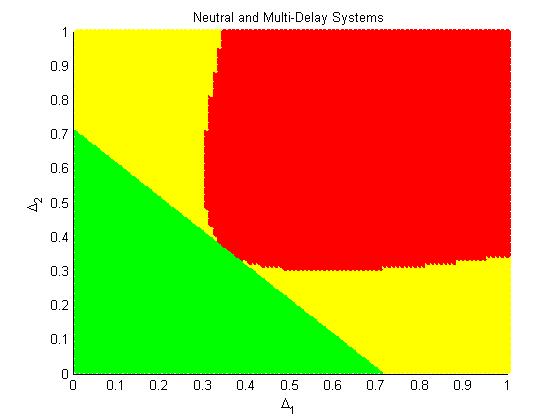}
         \caption{Neutral, $p = .5$}
     \end{subfigure}
     ~\hspace{-.01in}~
     \begin{subfigure}[b]{0.3\textwidth}
         \centering
         \includegraphics[width=\textwidth]{./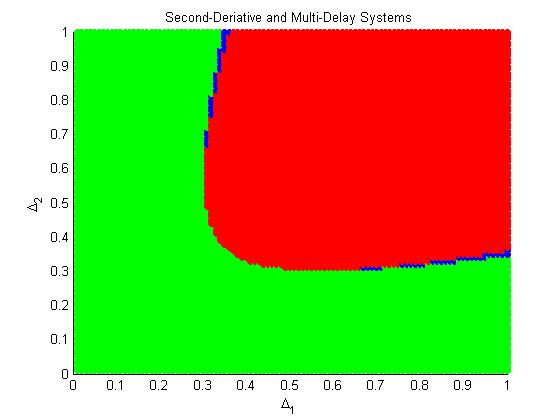}
          \centering
        \caption{$2^{nd}$-Derivative, $p = .5$}
     \end{subfigure}
     ~\hspace{-.01in}~
     \begin{subfigure}[b]{0.3\textwidth}
         \centering
         \includegraphics[width=\textwidth]{./Figures/multidelay/p50/multi.jpg}
         \caption{Multi-Delay, $p = .5$}
     \end{subfigure}
\end{figure}


\begin{figure}
      \centering
     \begin{subfigure}[b]{0.3\textwidth}
         \centering
       \includegraphics[width=\textwidth]{./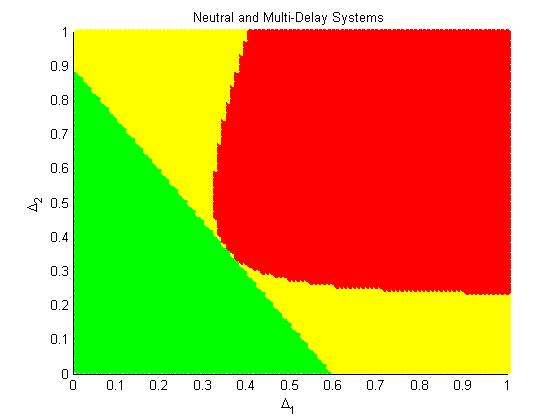}
         \caption{Neutral, $p = .6$}
     \end{subfigure}
     ~\hspace{-.01in}~
     \begin{subfigure}[b]{0.3\textwidth}
         \centering
         \includegraphics[width=\textwidth]{./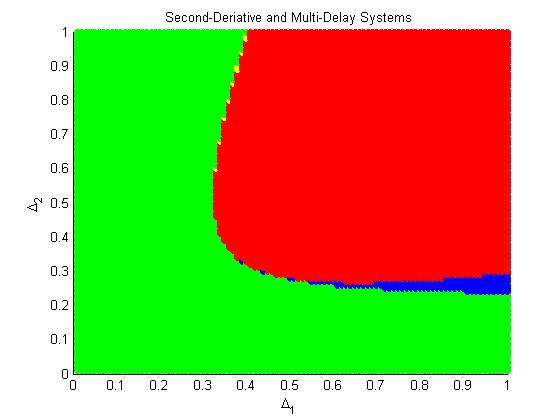}
          \centering
        \caption{$2^{nd}$-Derivative, $p = .6$}
     \end{subfigure}
     ~\hspace{-.01in}~
     \begin{subfigure}[b]{0.3\textwidth}
         \centering
         \includegraphics[width=\textwidth]{./Figures/multidelay/p60/multi.jpg}
         \caption{Multi-Delay, $p = .6$}
     \end{subfigure}

     \begin{subfigure}[b]{0.3\textwidth}
         \centering
       \includegraphics[width=\textwidth]{./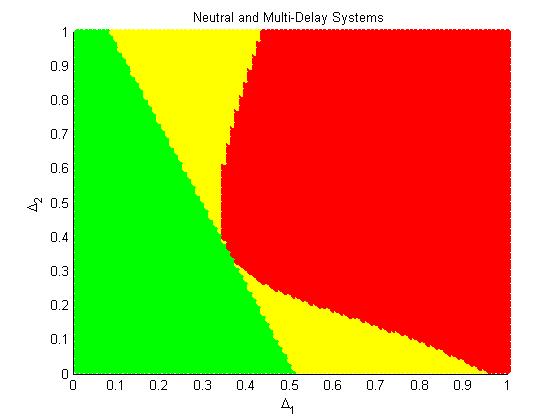}
         \caption{Neutral, $p = .7$}
     \end{subfigure}
     ~\hspace{-.01in}~
     \begin{subfigure}[b]{0.3\textwidth}
         \centering
         \includegraphics[width=\textwidth]{./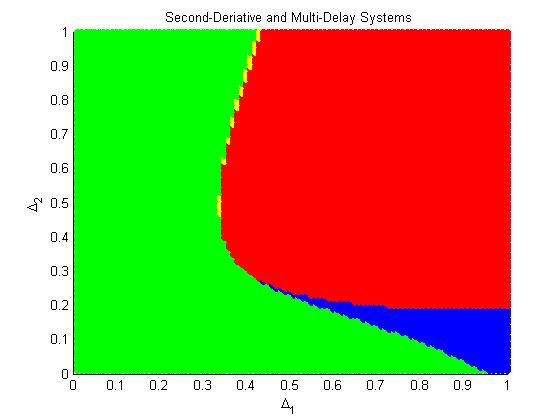}
          \centering
        \caption{$2^{nd}$-Derivative, $p = .7$}
     \end{subfigure}
     ~\hspace{-.01in}~
     \begin{subfigure}[b]{0.3\textwidth}
         \centering
         \includegraphics[width=\textwidth]{./Figures/multidelay/p70/multi.jpg}
         \caption{Multi-Delay, $p = .7$}
     \end{subfigure}
     ~\hspace{-.01in}~
\begin{subfigure}[b]{0.3\textwidth}
         \centering
       \includegraphics[width=\textwidth]{./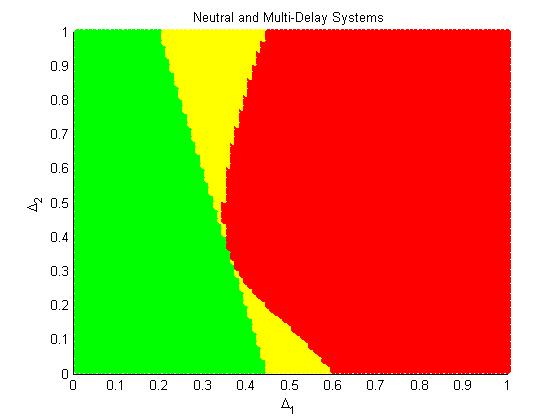}
         \caption{Neutral, $p = .8$}
     \end{subfigure}
     ~\hspace{-.01in}~
     \begin{subfigure}[b]{0.3\textwidth}
         \centering
         \includegraphics[width=\textwidth]{./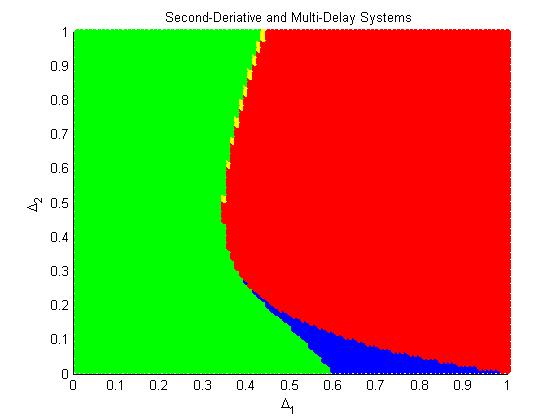}
          \centering
        \caption{$2^{nd}$-Derivative, $p = .8$}
     \end{subfigure}
     ~\hspace{-.01in}~
     \begin{subfigure}[b]{0.3\textwidth}
         \centering
         \includegraphics[width=\textwidth]{./Figures/multidelay/p80/multi.jpg}
         \caption{Multi-Delay, $p = .8$}
     \end{subfigure}
     ~\hspace{-.01in}~
\begin{subfigure}[b]{0.3\textwidth}
         \centering
       \includegraphics[width=\textwidth]{./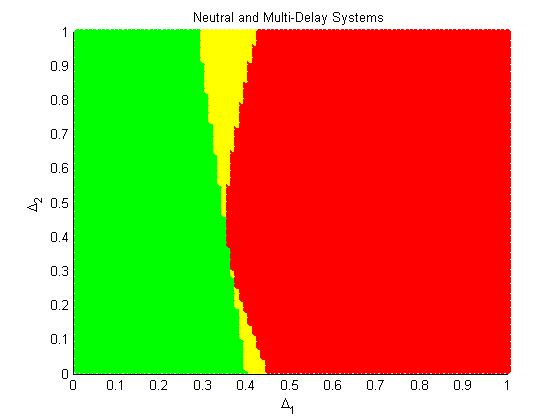}
         \caption{Neutral, $p = .9$}
     \end{subfigure}
     ~\hspace{-.01in}~
     \begin{subfigure}[b]{0.3\textwidth}
         \centering
         \includegraphics[width=\textwidth]{./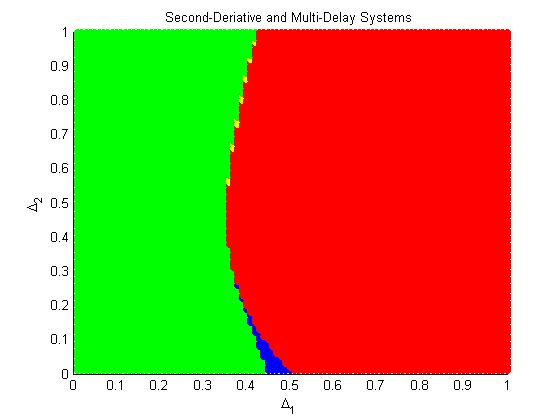}
          \centering
        \caption{$2^{nd}$-Derivative, $p = .9$}
     \end{subfigure}
     ~\hspace{-.01in}~
     \begin{subfigure}[b]{0.3\textwidth}
         \centering
         \includegraphics[width=\textwidth]{./Figures/multidelay/p90/multi.jpg}
         \caption{Multi-Delay, $p = .9$}
     \end{subfigure}
\begin{subfigure}[b]{0.3\textwidth}
         \centering
       \includegraphics[width=\textwidth]{./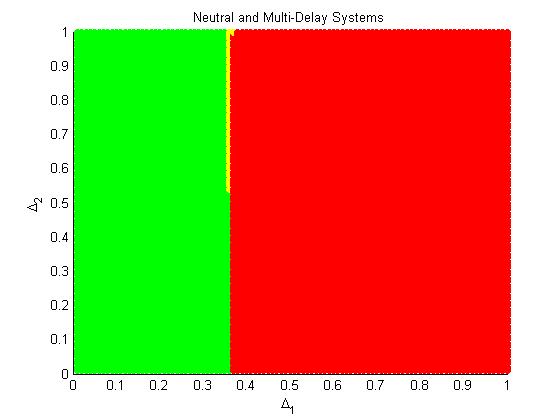}
         \caption{Neutral, $p = .99$}
     \end{subfigure}
     ~\hspace{-.01in}~
     \begin{subfigure}[b]{0.3\textwidth}
         \centering
         \includegraphics[width=\textwidth]{./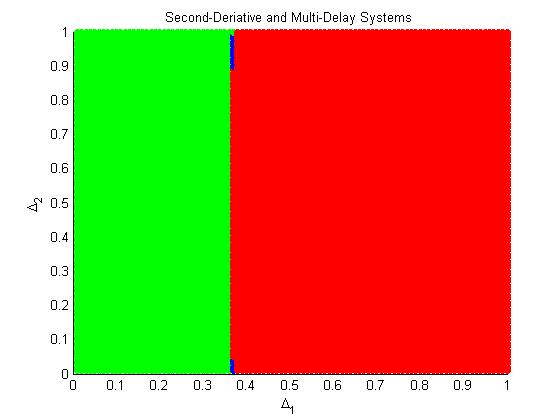}
          \centering
        \caption{$2^{nd}$-Derivative, $p = .99$}
     \end{subfigure}
     ~\hspace{-.01in}~
     \begin{subfigure}[b]{0.3\textwidth}
         \centering
         \includegraphics[width=\textwidth]{./Figures/multidelay/p99/multi.jpg}
         \caption{Multi-Delay, $p = .99$}
     \end{subfigure}
\end{figure}



\subsection{Comparing Symmetric Two-Delay and Three-Delay Systems}

After looking into the two-delay system numerically and observing where the change in stability occurs, it is interesting to consider how this would change with the addition of a third delay to the system. In particular, we would like to consider a two-delay system where each delay occurs with equal probability and then compare the stability of this system to that of a three-delay system where two of the delays are the same values as those in the two-delay system occuring with probability $p$, but the added delay is the midpoint of the other two delays and occurs with probability $1 - 2p$, as pictured in Figure \ref{picture_delays}.

\begin{figure}
     \begin{subfigure}[b]{0.49\textwidth}
         \centering
       \includegraphics[width=\textwidth]{./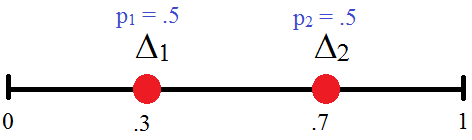}
     \end{subfigure}
     \begin{subfigure}[b]{0.49\textwidth}
         \centering
         \includegraphics[width=\textwidth]{./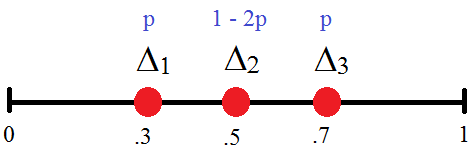}
          \centering
     \end{subfigure}
\caption{The delays for the two-delay system (pictured left) are $\Delta_1 = .3$ and $\Delta_2 = .7$, both occuring with equal probability. The delays for the three-delay system (pictured right) are $\Delta_1 = .3, \Delta_2 = .5$, and $\Delta_3 = .7$, where $\Delta_1$ and $\Delta_3$ each occur with probability $p$ and $\Delta_2$ occurs with probability $1 - 2p$. We note that $\mathbb{E}[\Delta] = .5$ for both of these example systems.}
\label{picture_delays}
\end{figure}

By this construction, $\mathbb{E}[\Delta]$ will be the same for all values of $p \in [0,1]$. The specific example that we will look at numerically has $\Delta_1 = .3$ and $\Delta_2 = .7$ in the two-delay system and $\Delta_1 = .3, \Delta_2 = .5, $ and $\Delta_3 = .7$ in the three-delay system.  We compare these two systems for various values of $p$ by checking whether each system is stable or unstable in each case. We collect this information in Table \ref{table_compare_2_3} and we observe that the two-delay system is more stable than the three-delay system in the cases we considered. To add a little intuition, we include some scatterplots in Figure \ref{3d_scatterplots_and_2d_scatterplot} showing the stability of the three-delay system for $p = .2$ and compare it so the stability scatterplot for the two-delay system. While these numerical examples are far from giving a conclusive test, it seems plausible that the two-delay system might always be more stable than the corresponding three-delay system with $\mathbb{E}[\Delta]$ remaining fixed.


\begin{table}[!ht]
\begin{center}
\begin{tabular}{| l | l | l | l | l | l |}
\hline\\ [-2.5ex]
$p$ & Two-Delay & Three-Delay \\
\hline
.1 &  Stable &  Unstable \\
\hline
.2 &  Stable &  Unstable\\
\hline
.3 &  Stable &  Unstable \\
\hline
.4 &   Stable & Unstable\\
\hline
.5 &  Stable & Stable\\
\hline
\end{tabular}
\end{center}
\caption{Comparison of the stability of the two-delay system with $\Delta_1 = .3$ and $\Delta_2 = .7$ with $p_1 = p_2 = .5$ to the three-delay system with $\Delta_1 = .3, \Delta_2 = .5, \Delta_3 = .7$ with $p_1 = p_3 = p$ and $p_2 = 1 - 2p$ for various values of $p$. We note that the three-delay system reduces to the two-delay system in the $p = .5$ case. }
\label{table_compare_2_3}
\end{table}


\begin{figure}
     \begin{subfigure}[b]{0.3\textwidth}
         \centering
       \includegraphics[width=\textwidth]{./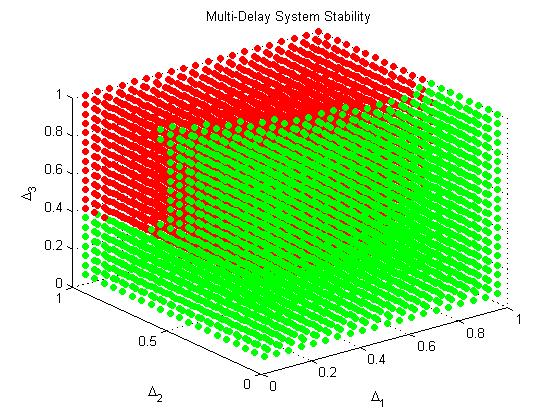}
     \end{subfigure}
     ~\hspace{-.01in}~
     \begin{subfigure}[b]{0.3\textwidth}
         \centering
         \includegraphics[width=\textwidth]{./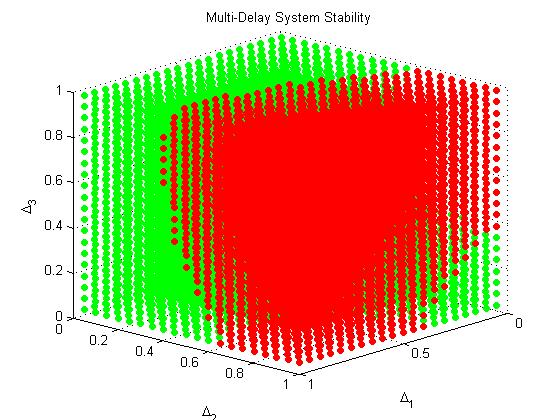}
          \centering
     \end{subfigure}
     ~\hspace{-.01in}~
     \begin{subfigure}[b]{0.3\textwidth}
         \centering
         \includegraphics[width=\textwidth]{./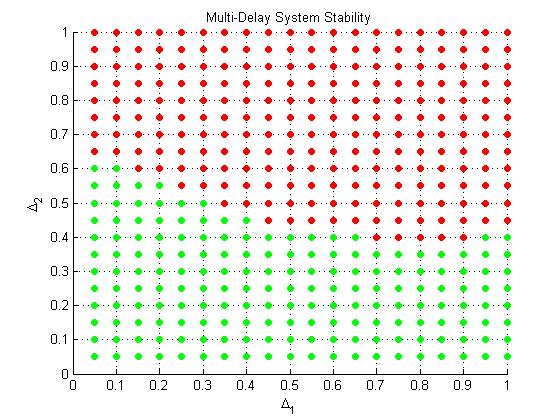}
     \end{subfigure}
\centering
     \begin{subfigure}[b]{0.3\textwidth}
         \centering
         \includegraphics[width=\textwidth]{./Figures/multidelay/p50/multi.jpg}
     \end{subfigure}
\caption{The top three scatterplots correspond to the three-delay system with $p_1 =  p_3 = .2$ and $p_2 = .6$, viewed from three different angles. The bottom scatterplot corresponds to the two-delay system with $p_1 = p_2 = .5$. Green points in the scatterplots correspond to points where the system is stable and red points in the scatterplots correspond to points where the system is unstable.}
\label{3d_scatterplots_and_2d_scatterplot}
\end{figure}


Motivated by these numerical results, we take a closer look at the two-delay and three-delay systems. The two-delay system is in the form $$\overset{\bullet}{u}(t) = \alpha_0 u(t) + C \left[ \frac{1}{2} u(t - \Delta_1) + \frac{1}{2} u(t - \Delta_3)  \right]$$ and the corresponding symmetric three-delay system is $$\overset{\bullet}{u}(t) = \alpha_0 u(t) + C \left[ p u(t - \Delta_1) + (1-2p) u\left(t - \frac{\Delta_1 + \Delta_3}{2}\right)+ p u(t - \Delta_3)  \right].$$ The corresponding characteristic polynomials for the two-delay and three-delay systems are 

\begin{align}
r &= \alpha_0 + C \left[ \frac{1}{2} e^{-r \Delta_1} + \frac{1}{2} e^{- \Delta_3} \right]\\
&= \alpha_0 + C \left[ \frac{1}{2} \left( e^{-r \frac{\Delta_1}{2}} - e^{-r \frac{\Delta_3}{2}}  \right)^2 + e^{-r \frac{\Delta_1}{2}}e^{-r \frac{\Delta_3}{2}}  \right]
\end{align}

\noindent and

\begin{align}
r &= \alpha_0 + C \left[  p e^{-r \Delta_1} + (1-2p)e^{-r \left( \frac{\Delta_1 + \Delta_3}{2} \right) } + p e^{-r \Delta_3}  \right]\\
&= \alpha_0 + C \left[ p \left( e^{-r \frac{\Delta_1}{2}} - e^{-r \frac{\Delta_3}{2}}  \right)^2 + e^{-r \frac{\Delta_1}{2}}e^{-r \frac{\Delta_3}{2}}  \right]
\end{align}

\noindent respectively. Let $r = a + i b$ for $a,b \in \mathbb{R}$. We now aim to split each of the characteristic equations into the real and imaginary parts of $r$ and then we will specifically analyze the equations corresponding to the real part of $r$ as this is the part that determines the stability of the system. Below we have the equations corresponding to the real part of $r$ for the two-delay and three-delay systems, respectively. 

\begin{align}
a &= \alpha_0 + C \left[ \frac{1}{2} \left( e^{-a \Delta_1} + e^{-a \Delta_3}  \right) \right]\\
&:= \alpha_0 + C  f_2(a)
\end{align}

\begin{align}
a &= \alpha_0 + C \left[ p \left( e^{-a \Delta_1} + e^{-a \Delta_3}  \right) + (1 - 2p)e^{-a \left(  \frac{\Delta_1 + \Delta_3}{2} \right) } \cos \left( b \left( \frac{\Delta_1 + \Delta_3}{2} \right)  \right)  \right]\\
&:= \alpha_0 + C f_3(a)
\end{align}

\noindent where 

\begin{align}
f_2(a) &= \frac{1}{2} \left( e^{-a \Delta_1} + e^{-a \Delta_3}  \right)\\
f_3(a) &=  p \left( e^{-a \Delta_1} + e^{-a \Delta_3}  \right) + (1 - 2p)e^{-a \left(  \frac{\Delta_1 + \Delta_3}{2} \right) } \cos \left( b \left( \frac{\Delta_1 + \Delta_3}{2} \right)  \right).
\end{align}

\noindent Since we are assuming that $C < 0$, if we can show that $f_2(a) \geq f_3(a)$ for all $a \in \mathbb{R}$, then we will have that the real parts of the roots of the characteristic polynomial for the two-delay system will be less than or equal to those corresponding to the three-delay system for all $r \in \mathbb{C}$. This would suggest that the two-delay system is more stable the the three-delay system. Thus, it remains to show that $f_2(a) \geq f_3(a)$. First, we observe that $f_3(a)$ has the following upper bound $$f_3(a) \leq  p \left( e^{-a \Delta_1} + e^{-a \Delta_3}  \right) + (1 - 2p)e^{-a \left(  \frac{\Delta_1 + \Delta_3}{2} \right) } := g_3(a).$$ Thus, if we can show that $f_2(a) - g_3(a) \geq 0$, then we are done.

\begin{align}
f_2(a) - g_3(a) &=  \left( \frac{1}{2} - p \right) \left( e^{-a \Delta_1} + e^{-a \Delta_3}  \right) - (1 - 2p)e^{-a \left(  \frac{\Delta_1 + \Delta_3}{2} \right) }\\
&=  \left( 1 - 2p \right)  \left[  \frac{1}{2}\left( e^{-a \Delta_1} + e^{-a \Delta_3}  \right) - e^{-a \left(  \frac{\Delta_1 + \Delta_3}{2} \right) } \right]\\
&\geq 0
\end{align}

\noindent because $$\frac{1}{2}\left( e^{-a \Delta_1} + e^{-a \Delta_3}  \right) \geq e^{-a \left(  \frac{\Delta_1 + \Delta_3}{2} \right) }$$ by Jensen's inequality. This result essentially shows that the two-delay system is more stable than the three-delay system. 


\section{Potential Insight from the Lambert W Function}
\label{lambertwsection}

The Lambert W function can be used to solve linear constant-coefficient delay differential equations analytically. First consider the constant-delay differential equation $$\overset{\bullet}{u}(t) = \alpha_0 u(t) + A_0 u(t - \Delta^*).$$ We note that our neutral approximation reduces to a DDE in this form when $\Delta^* = \mathbb{E}[\Delta^*]$. By looking for solutions in the form $u(t) = e^{rt}$, we obtain the characteristic equation corresponding to this DDE which is $$r = \alpha_0 + A_0 e^{-r \Delta^*}$$ and we can rearrange this equation to be in the form $$ \Delta^*(r - \alpha_0) e^{\Delta^* (r - \alpha_0)} = \Delta^*  A_0 e^{-\alpha_0 \Delta^* }$$ so that it can be directly solved using the Lambert W function to obtain $$r = \frac{ W_k \left( \Delta^* A_0 e^{-\alpha_0 \Delta^*}  \right)}{\Delta^*} + \alpha_0$$ where $W_k(\cdot)$ denotes the $k$-th branch of the Lambert W function. This gives us a general solution in the form $$u(t) = \sum_{k = - \infty}^{\infty} c_k \exp \left( \left[ \frac{ W_k \left( \Delta^* A_0 e^{-\alpha_0 \Delta^*}  \right)}{\Delta^*} + \alpha_0  \right] t \right).$$

Now consider a DDE in the form of our neutral approximation (but with $\Delta^* \neq \mathbb{E}[\Delta]$ so that $A_1 \neq 0$, otherwise the analysis reduces to the case we just considered above). $$\overset{\bullet}{u}(t) = \alpha_0 u(t) + A_0 u(t - \Delta^*) + A_1 \overset{\bullet}{u}(t - \Delta^*)$$ The characteristic equation corresponding to this DDE is 

\begin{align*}
r &= \alpha_0 + A_0 e^{- r \Delta^*} + r A_1 e^{-r \Delta^*}\\
&= \alpha_0 + e^{-r \Delta^*}(A_0 + r A_1)
\end{align*}

\noindent which can be rewritten to be in the form $$ e^{r \Delta^*} = \frac{r A_1 + A_0}{r - \alpha_0}.$$ This equation can not be explicitly solved for $r$ with the Lambert W function and instead we must use a generalization of the Lambert W function, as was introduced in \cite{mezHo2017generalization}. Taking a look at a DDE in the form of our second-derivative approximation $$\overset{\bullet}{u}(t) = \alpha_0 u(t) + A_0 u(t - \Delta^*) + A_1 \overset{\bullet}{u}(t  - \Delta^*) + A_2 \overset{\bullet \bullet}{u}(t - \Delta^*)$$ we see that the corresponding characteristic equation is $$r = \alpha_0 + e^{- r \Delta^*} (A_0 + r A_1 + r^2 A_2)$$ which can be rewritten to be in the form $$e^{r \Delta^*} = \frac{r^2 A_2 + r A_1 + A_0}{r - \alpha_0}.$$ Assuming that $\alpha_0$ is not a root of the polynomial $r^2 A_2 + r A_1 + A_0$, then this characteristic equation also cannot be solved explicitly with only the Lambert W function and the aforementioned generalization to the Lambert W function could instead be used to solve for $r$ explicitly.  

In general, suppose that we considered a single-delay approximation that kept the first $n$ derivatives, yielding a DDE in the form $$\overset{\bullet}{u}(t) = \alpha_0 u(t) + \sum_{k=0}^{n} A_k u^{(k)}(t - \Delta^*).$$ The corresponding characteristic equation is $$r = \alpha_0 + e^{-r \Delta^*} \sum_{k=0}^{n} A_k r^k $$ which can be rewritten to be in the form $$e^{r \Delta^*} = \frac{\sum_{k=0}^{n} A_k r^k}{r - \alpha_0}.$$ If the polynomial $\sum_{k=0}^{n} A_k r^k$ has roots $t_1, ..., t_n$, then we can rewrite this as $$ \frac{r - \alpha_0}{(r - t_1) \cdots (r - t_n)} e^{r \Delta^*} = 1.$$ Let $r = \frac{z}{\Delta^*}$ and the equation becomes $$\frac{z - \Delta^* \alpha_0}{(z - \Delta^* t_1) \cdots (z - \Delta^* t_n)} e^z = \frac{1}{(\Delta^*)^{n-1}}$$ so an analysis of the multivalued inverse function of $$f(z) = \frac{z - a_0}{(z - a_1) \cdots (z - a_n)} e^z$$ for some constants $a_0, ..., a_n$ would be sufficient for better understanding the solutions and stability of DDEs in this form. Indeed, the inverse of such a function would generalize the Lambert W function which is the multivalued inverse of  $$f(z) = ze^z.$$

\section{Conclusion and Future Research}
\label{conclusion}

In this paper we introduced two single-delay approximations to a system with multiple constant delays with the goal of approximating where the change in stability occurs in the multi-delay system. Our single-delay approximations were obtained by Taylor expanding delayed terms in the multi-delay DDE about some constant $\Delta^*$ and then truncating higher-order terms. When keeping only the first derivative term from the Taylor expansion, we obtained our neutral approximation to the multi-delay system and derived a formula for the critical delay of this neutral DDE which we used to approximate where the change in stability occurs in the multi-delay system. We observed that making the choice $\Delta^* = \mathbb{E}[\Delta]$ in our neutral approximation gave better results than other choices of $\Delta^*$ and we noted that this has to do with the coefficient of the leading-order error term from the Taylor expansion being minimized for this choice of $\Delta^*$. 

Unsatisfied with the performance of the neutral approximation in the two-delay setting near $p = \frac{1}{2}$, we considered keeping the second-derivative term from the Taylor expansion to get our other approximation to the multi-delay system. After deriving the critical delay for our second-derivative approximation, we saw that it performed quite well in the two-delay setting, approximating the change in stability of the multi-delay system with greater than $94 \%$ accuracy in all cases we considered with about $98 \%$ accuracy on average over various values of $p$ when sampling $(\Delta_1, \Delta_2)$ pairs from the unit square, as seen in Table \ref{Table_Accuracy_2nd}. 

We took a closer look at a two-delay model and found values of $p$ that maximized the coefficients of the leading-order error terms in the Taylor expansions for each approximation which helped explain why each approximation did not perform as well near certain values of $p$. We also briefly compared the stability of two-delay and three-delay systems with the same value of $\mathbb{E}[\Delta]$ numerically and we indeed saw analytically that the two-delay system was more stable the the three-delay system with appropriate symmetric probability distributions for the delays in each case. It would be interesting to see if this result could be generalized to asymmetric probability distributions. Another natural extension to our work would be to do a more in-depth analysis of a system with more than two constant delays to better understand how our approximations perform in more complicated settings. 

One potential extension to our work could be to do an analysis of approximations that include more terms from the Taylor expansion we used, such as including the delayed third-derivative term in the approximation. In this case, the cubic formula can be used to find an expression for the critical delay, though we have not analyzed this case in full detail. Another possible extension for future research could be to consider similar approximations to more general multi-delay systems with higher-order derivative terms.


\section{Appendix} 

\subsection{Proof of Theorem \ref{fluidlimit}}

Before we prove Theorem \ref{fluidlimit}, we state a useful lemma from \cite{kurtz1978strong} that will be used in the proof. 

\begin{lemma}
\label{brownian_lemma}
A standard Poisson process $\{ \Pi(t) \}_{t \geq 0 }$ can be realized on the same probability space as a standard brownian motion $\{ W(t) \}_{t \geq 0}$ in such a way that the almost surely finite random variable $$Z \equiv \sup_{t \geq 0} \frac{|\Pi(t) - t - W(t)|}{\log(2 \vee t )}$$ has finite moment generating function in a neighborhood of the origin and in particular finite mean.
\end{lemma}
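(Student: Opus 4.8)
The plan is to obtain this as a consequence of the strong-approximation paradigm: couple the integer-time skeleton of $\Pi$ to $W$ using the Kom\'os--Major--Tusn\'ady (KMT) approximation for i.i.d.\ partial sums with a finite moment generating function, and then fill in the behaviour between consecutive integers using elementary Poisson and Gaussian tail estimates. The point is that the whole difficulty is concentrated at integer times, where the process is a centered random walk.

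First I would record the discrete structure. Writing $X_i = (\Pi(i) - \Pi(i-1)) - 1$, the $X_i$ are i.i.d.\ with mean $0$, variance $1$, and $\mathbb{E}[e^{s|X_1|}] < \infty$ for every real $s$ (since a $\mathrm{Poisson}(1)$ variable has moment generating function $e^{e^s - 1}$), and $S_n := \sum_{i=1}^n X_i = \Pi(n) - n$. The KMT theorem then produces, on a common probability space, the sequence $(S_n)$ together with a standard Brownian motion $\{W(t)\}_{t \ge 0}$ and constants $C,K,\lambda > 0$ such that $\mathbb{P}\!\left(\max_{1 \le k \le n} |S_k - W(k)| > C \log n + x\right) \le K e^{-\lambda x}$ for all $n \ge 1$ and $x > 0$. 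A dyadic blocking argument over $n \in [2^j, 2^{j+1})$, choosing the threshold proportional to $j$ on each block and summing the resulting geometric series, upgrades this into the statement that $T := \sup_{n \ge 2} |S_n - W(n)| / \log n$ has an exponential tail, and hence a finite moment generating function in a neighborhood of the origin. This is the heart of the proof.

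Next I would pass to continuous time. The process $\Pi$ is extended off the integers by filling each interval $[n,n+1]$ with an independent Poisson bridge conditioned on the endpoint values $\Pi(n),\Pi(n+1)$ already supplied by KMT; this recovers a genuine Poisson process sharing the probability space with the (already full) Brownian motion $W$. For $t \in [n,n+1)$, using that $\Pi$ is nondecreasing, the triangle inequality gives
$$|\Pi(t) - t - W(t)| \le |S_n - W(n)| + \big(\Pi(n+1) - \Pi(n)\big) + 1 + \sup_{s \in [n,n+1]} |W(s) - W(n)|.$$
Setting $P_n := \Pi(n+1) - \Pi(n) \sim \mathrm{Poisson}(1)$ and $B_n := \sup_{s \in [n,n+1]} |W(s) - W(n)|$, and noting that $\log(2 \vee t) \ge \log n$ for $t \in [n,n+1)$ with $n \ge 2$, the supremum over $t > 2$ of the normalized quantity is dominated by $T + \sup_{n \ge 2}(P_n + 1)/\log n + \sup_{n \ge 2} B_n/\log n$. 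The two oscillation suprema each have a finite moment generating function: $P_n$ has super-exponential (factorial) tails and $B_n$ is sub-Gaussian by the reflection principle, so $\sum_{n \ge 2} \mathbb{P}(P_n > x \log n)$ and $\sum_{n \ge 2} \mathbb{P}(B_n > x \log n)$ converge and decay faster than any exponential in $x$, the sums being dominated by their $n=2$ terms. The compact range $t \in [0,2]$, where $\log(2 \vee t) = \log 2$, is treated separately: there the relevant quantities ($\Pi(2)$, the errors $|S_1 - W(1)|, |S_2 - W(2)|$, and the oscillation of $W$ on $[0,2]$) are all a.s.\ finite with good tails. Combining these finitely many contributions, and using that a finite sum of random variables each with a finite moment generating function near the origin again has one (by Cauchy--Schwarz), yields that $Z$ has the claimed property.

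I expect the main obstacle to be the first step, namely importing the KMT approximation with its exponential tail and converting its fixed-$n$ form into the uniform-in-$n$ statement that $T = \sup_{n \ge 2} |S_n - W(n)|/\log n$ has a finite moment generating function; the binding constraint on the neighborhood of the origin comes precisely from the rate $\lambda$ here, which is consistent with the lemma asserting finiteness only near $0$. The dyadic blocking needed for this upgrade, and the verification that the exponential rate survives the union bound, is the delicate part. By contrast, the continuous-time interpolation and the Poisson and Gaussian oscillation estimates are routine tail computations that I would not dwell on.
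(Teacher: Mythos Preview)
Your proposal is essentially correct, but there is nothing to compare it to: the paper does not prove this lemma at all. It is stated in the appendix with the preface ``we state a useful lemma from \cite{kurtz1978strong} that will be used in the proof,'' i.e.\ it is imported wholesale from Kurtz's strong-approximation paper and used as a black box in the fluid-limit argument.

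That said, your route via KMT is the standard way such statements are established, and the outline is sound. The decomposition into (i) the integer-time skeleton handled by KMT for i.i.d.\ centered $\mathrm{Poisson}(1)-1$ increments, (ii) a dyadic union bound to pass from the fixed-$n$ KMT inequality to control of $\sup_{n\ge 2}|S_n-W(n)|/\log n$, and (iii) interpolation between integers via Poisson and Gaussian oscillation bounds, is exactly right. One small point worth tightening: when you say you ``fill each interval $[n,n+1]$ with an independent Poisson bridge conditioned on the endpoint values,'' you should make explicit that conditionally on $\Pi(n+1)-\Pi(n)=k$ the $k$ jump times are i.i.d.\ uniform on $[n,n+1]$, and that this construction, glued to the KMT-provided skeleton, does yield a process with the law of a rate-$1$ Poisson process on the enlarged space; this is elementary but is the place where measurability and independence have to be checked. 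The rest (sub-Gaussian tails for $B_n$ via reflection, super-exponential tails for $P_n$, and the Cauchy--Schwarz combination) is routine, as you say.
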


\noindent \textbf{Proof of Theorem \ref{fluidlimit}.}

\begin{proof}

We will proceed with a proof very similar to that found in \cite{pender2020stochastic}. Recall that the scaled queueing process is defined as follows.

 \begin{eqnarray}
Q^{\eta}_i(t) &=& Q^{\eta}_i(0) +  \frac{1}{\eta}\Pi^a_{i} \left( \eta \int^{t}_{0} \frac{  \lambda \cdot \exp(- \theta \left( \sum^{m}_{k=1} p_k Q^{\eta}_i(s-\Delta_k) \right))}{ \sum^{N}_{j=1} \exp(- \theta \left( \sum^{m}_{k=1} p_k Q^{\eta}_j(s-\Delta_k) \right) ) } ds \right) \nonumber \\
&&-  \frac{1}{\eta}\Pi^d_{i} \left(\eta \int^{t}_{0} \mu Q^{\eta}_i(s) ds \right).
\end{eqnarray}

From Lemma \ref{brownian_lemma}, we can deduce that for every $t > 0$ and for sufficiently large $\eta$, we have that $$\frac{\Pi(\eta t)}{\eta} = \frac{W(\eta t)}{\eta} + t + O \left( \frac{\log(\eta)}{\eta}  \right).$$

We take the difference of the scaled queue length process and the fluid limit in preparation for finding an upper bound to it.

\begin{align*}
Q_i^{\eta}(t) - q_i(t) &= Q_i^{\eta}(0) - q_i(0) + \frac{1}{\eta} \Pi_i^{a} \left( \eta \int_{0}^{t} \frac{\lambda \cdot \exp \left( -\theta \sum_{k=1}^{m} Q_i^{\eta}(s - \Delta_k) \right) }{ \sum_{j=1}^{N} \exp \left( -\theta \sum_{k=1}^{m} Q_j^{\eta}(s - \Delta_k) \right) } ds  \right) \\
&- \int_{0}^{t} \frac{\lambda \cdot \exp \left( -\theta \sum_{k=1}^{m} q_i^{\eta}(s - \Delta_k) \right) }{ \sum_{j=1}^{N} \exp \left( -\theta \sum_{k=1}^{m} q_j^{\eta}(s - \Delta_k) \right) } ds\\
&- \frac{1}{\eta} \Pi_i^{d} \left( \eta \int_{0}^{t} \mu Q_i^{\eta}(s) ds  \right) + \int_{0}^{t} \mu q_i(s) ds\\
&= Q_i^{\eta}(0) - q_i(0)\\
&+  \frac{1}{\eta} \Pi_i^{a} \left( \eta \int_{0}^{t} \frac{\lambda \cdot \exp \left( -\theta \sum_{k=1}^{m} Q_i^{\eta}(s - \Delta_k) \right) }{ \sum_{j=1}^{N} \exp \left( -\theta \sum_{k=1}^{m} Q_j^{\eta}(s - \Delta_k) \right) } ds  \right) \\
&+  \int_{0}^{t} \frac{\lambda \cdot \exp \left( -\theta \sum_{k=1}^{m} Q_i^{\eta}(s - \Delta_k) \right) }{ \sum_{j=1}^{N} \exp \left( -\theta \sum_{k=1}^{m} Q_j^{\eta}(s - \Delta_k) \right) } ds  - \int_{0}^{t} \frac{\lambda \cdot \exp \left( -\theta \sum_{k=1}^{m} Q_i^{\eta}(s - \Delta_k) \right) }{ \sum_{j=1}^{N} \exp \left( -\theta \sum_{k=1}^{m} Q_j^{\eta}(s - \Delta_k) \right)  } ds\\
&- \int_{0}^{t} \frac{\lambda \cdot \exp \left( -\theta \sum_{k=1}^{m} q_i^{\eta}(s - \Delta_k) \right) }{ \sum_{j=1}^{N} \exp \left( -\theta \sum_{k=1}^{m} q_j^{\eta}(s - \Delta_k) \right) } ds\\
&- \frac{1}{\eta} \Pi_i^{d} \left( \eta \int_{0}^{t} \mu Q_i^{\eta}(s) ds  \right) + \int_{0}^{t} \mu Q_i^{\eta}(s) ds - \int_0^{t} \mu Q_i^{\eta}(s) ds + \int_{0}^{t} \mu q_i(s) ds
\end{align*}

\noindent We added and subtracted the same term a couple times to put the expression in a better form to apply the result from Lemma \ref{brownian_lemma}, which tells us that 

\begin{align*}
Q_i^{\eta}(t) &= Q_i^{\eta}(0) +  \frac{1}{\eta} W_i^{a} \left( \eta \int_{0}^{t} \frac{\lambda \cdot \exp \left( -\theta \sum_{k=1}^{m} Q_i^{\eta}(s - \Delta_k) \right) }{ \sum_{j=1}^{N} \exp \left( -\theta \sum_{k=1}^{m} Q_j^{\eta}(s - \Delta_k) \right) } ds  \right) \\
&+  \int_{0}^{t} \frac{\lambda \cdot \exp \left( -\theta \sum_{k=1}^{m} Q_i^{\eta}(s - \Delta_k) \right) }{ \sum_{j=1}^{N} \exp \left( -\theta \sum_{k=1}^{m} Q_j^{\eta}(s - \Delta_k) \right) } ds \\
&- \frac{1}{\eta} W_i^{d} \left( \eta \int_{0}^{t} \mu Q_i^{\eta}(s) ds  \right) - \int_{0}^{t} \mu Q_i^{\eta}(s) ds + O \left( \frac{\log(\eta)}{\eta}  \right)\\
&= Q_i^{\eta}(0) +  \frac{1}{\sqrt{\eta}} W_i^{a} \left(  \int_{0}^{t} \frac{\lambda \cdot \exp \left( -\theta \sum_{k=1}^{m} Q_i^{\eta}(s - \Delta_k) \right) }{ \sum_{j=1}^{N} \exp \left( -\theta \sum_{k=1}^{m} Q_j^{\eta}(s - \Delta_k) \right) } ds  \right) \\
&+  \int_{0}^{t} \frac{\lambda \cdot \exp \left( -\theta \sum_{k=1}^{m} Q_i^{\eta}(s - \Delta_k) \right) }{ \sum_{j=1}^{N} \exp \left( -\theta \sum_{k=1}^{m} Q_j^{\eta}(s - \Delta_k) \right) } ds \\
&- \frac{1}{\sqrt{\eta}} W_i^{d} \left( \int_{0}^{t} \mu Q_i^{\eta}(s) ds  \right) - \int_{0}^{t} \mu Q_i^{\eta}(s) ds + O \left( \frac{\log(\eta)}{\eta}  \right)
\end{align*}

\noindent where $W_i^a$ and $W_i^d$ are standard Brownian motions and we used the scaling property of Brownian motion in the second equality.

Taking the absolute value and using the result from Lemma \ref{brownian_lemma}, we obtain the following upper bound on the difference.

\begin{align*}
|Q_i^{\eta}(t) - q_i(t) | &\leq  \left| Q_i^{\eta}(0) - q_i(0) \right|\\
&+ \left| \frac{1}{\sqrt{\eta}} W_i^{a} \left(  \int_{0}^{t} \frac{\lambda \cdot \exp \left( -\theta \sum_{k=1}^{m} Q_i^{\eta}(s - \Delta_k) \right) }{ \sum_{j=1}^{N} \exp \left( -\theta \sum_{k=1}^{m} Q_j^{\eta}(s - \Delta_k) \right) } ds  \right)  \right|\\
&+ \left| \frac{1}{\sqrt{\eta}} W_i^{d} \left( \int_{0}^{t} \mu Q_i^{\eta}(s) ds  \right)  \right|\\
&+ \left|  \int_{0}^{t} \frac{\lambda \cdot \exp \left( -\theta \sum_{k=1}^{m} Q_i^{\eta}(s - \Delta_k) \right) }{ \sum_{j=1}^{N} \exp \left( -\theta \sum_{k=1}^{m} Q_j^{\eta}(s - \Delta_k) \right) } ds  - \int_{0}^{t} \frac{\lambda \cdot \exp \left( -\theta \sum_{k=1}^{m} q_i^{\eta}(s - \Delta_k) \right) }{ \sum_{j=1}^{N} \exp \left( -\theta \sum_{k=1}^{m} q_j^{\eta}(s - \Delta_k) \right) } ds  \right|\\
&+ \left| \int_0^{t} \mu Q_i^{\eta}(s) ds - \int_{0}^{t} \mu q_i(s) ds  \right| + O \left( \frac{\log(\eta)}{\eta}  \right)
\end{align*}

\noindent We now want to show that, for all $T > 0$,  $$\lim_{\eta \to \infty} \sup_{t \leq T} \left| \frac{1}{\sqrt{\eta}} W_i^{a} \left(  \int_{0}^{t} \frac{\lambda \cdot \exp \left( -\theta \sum_{k=1}^{m} Q_i^{\eta}(s - \Delta_k) \right) }{ \sum_{j=1}^{N} \exp \left( -\theta \sum_{k=1}^{m} Q_j^{\eta}(s - \Delta_k) \right) } ds  \right)  \right| = 0$$ and $$\lim_{\eta \to \infty} \sup_{t \leq T} \left| \frac{1}{\sqrt{\eta}} W_i^{d} \left( \int_{0}^{t} \mu Q_i^{\eta}(s) ds  \right)  \right| = 0.$$ We have 

\begin{align*}
\lim_{\eta \to \infty} \sup_{t \leq T} \left| \frac{1}{\sqrt{\eta}} W_i^{a} \left(  \int_{0}^{t} \frac{\lambda \cdot \exp \left( -\theta \sum_{k=1}^{m} Q_i^{\eta}(s - \Delta_k) \right) }{ \sum_{j=1}^{N} \exp \left( -\theta \sum_{k=1}^{m} Q_j^{\eta}(s - \Delta_k) \right) } ds  \right)  \right| &\leq \lim_{\eta \to \infty} \sup_{t \leq T} \left| \frac{1}{\sqrt{\eta}} W_i^{a} (\lambda \cdot t)  \right|\\
&= \lim_{\eta \to \infty} \sup_{t \leq T} \left| W_i^{a} \left( \frac{\lambda \cdot t}{\eta}  \right)  \right|\\
&= 0
\end{align*}

\noindent where we used the fact that the multinomial logit function is bounded above by $1$, Brownian scaling, and the fact that $W(0) = 0$. Similarly, for the next term we have

\begin{align*}
\lim_{\eta \to \infty} \sup_{t \leq T} \left| \frac{1}{\sqrt{\eta}} W_i^{d} \left( \int_{0}^{t} \mu Q_i^{\eta}(s) ds  \right)  \right| &\leq \lim_{\eta \to \infty} \sup_{t \leq T} \left| \frac{1}{\sqrt{\eta}} W_i^{d} \left( \mu \cdot t \cdot (Q_i^{\eta}(0) + \lambda )   \right)  \right|\\
&= \lim_{n \to \infty} \sup_{t \leq T} \left|  W_i^{d} \left( \frac{1}{\eta} \cdot \mu \cdot t \cdot (Q_i^{\eta}(0) + \lambda )   \right)  \right|\\
&= 0.
\end{align*}

\noindent Thus, we have that for every $\epsilon > 0$, there exists some $\eta^*$ such that for every $\eta \geq \eta^*$ we have that $$\lim_{\eta \to \infty} \sup_{t \leq T} \left| \frac{1}{\sqrt{\eta}} W_i^{a} \left(  \int_{0}^{t} \frac{\lambda \cdot \exp \left( -\theta \sum_{k=1}^{m} Q_i^{\eta}(s - \Delta_k) \right) }{ \sum_{j=1}^{N} \exp \left( -\theta \sum_{k=1}^{m} Q_j^{\eta}(s - \Delta_k) \right) } ds  \right)  \right| < \frac{\epsilon}{5},$$

$$\lim_{\eta \to \infty} \sup_{t \leq T} \left| \frac{1}{\sqrt{\eta}} W_i^{d} \left( \int_{0}^{t} \mu Q_i^{\eta}(s) ds  \right)  \right| < \frac{\epsilon}{5},$$

$$\left| Q_i^{\eta}(0) - q_i(0)  \right| < \frac{\epsilon}{5},$$ and $$O \left( \frac{\log \eta}{\eta}  \right) < \frac{\epsilon}{5}$$ so that 

\begin{align*}
\left| Q_i^{\eta}(t) - q_i(t)  \right| &\leq \left|  \int_{0}^{t} \frac{\lambda \cdot \exp \left( -\theta \sum_{k=1}^{m} Q_i^{\eta}(s - \Delta_k) \right) }{ \sum_{j=1}^{N} \exp \left( -\theta \sum_{k=1}^{m} Q_j^{\eta}(s - \Delta_k) \right) } ds  - \int_{0}^{t} \frac{\lambda \cdot \exp \left( -\theta \sum_{k=1}^{m} q_i^{\eta}(s - \Delta_k) \right) }{ \sum_{j=1}^{N} \exp \left( -\theta \sum_{k=1}^{m} q_j^{\eta}(s - \Delta_k) \right) } ds  \right|\\
&+ \left| \int_0^{t} \mu Q_i^{\eta}(s) ds - \int_{0}^{t} \mu q_i(s) ds  \right| + \frac{4 \epsilon}{5}\\
&\leq  \int_{0}^{t} \left|  \frac{\lambda \cdot \exp \left( -\theta \sum_{k=1}^{m} Q_i^{\eta}(s - \Delta_k) \right) }{ \sum_{j=1}^{N} \exp \left( -\theta \sum_{k=1}^{m} Q_j^{\eta}(s - \Delta_k) \right) }   -  \frac{\lambda \cdot \exp \left( -\theta \sum_{k=1}^{m} q_i^{\eta}(s - \Delta_k) \right) }{ \sum_{j=1}^{N} \exp \left( -\theta \sum_{k=1}^{m} q_j^{\eta}(s - \Delta_k) \right) } \right| ds  \\
&+  \int_0^{t} \left| \mu Q_i^{\eta}(s)  - \int_{0}^{t} \mu q_i(s) \right| ds   + \frac{ 4 \epsilon}{5}
\end{align*}

\noindent Using the fact that the multinomial logit probability function and the departure function are both differentiable with bounded first derivatives, there exists some constant $C$ such that

\begin{align*}
\left| Q_i^{\eta}(t) - q_i(t) \right| &\leq C \int_0^{t} \sup_{- \max_k(\Delta_k) \leq r \leq s} \left|  Q_i^{\eta}(r) - q_i(r)  \right| ds + \frac{4 \epsilon}{5}\\
&\leq C \cdot \left( \int_0^t \sup_{0 \leq r \leq s} \left| Q_i^{\eta}(r) - q_i(r)   \right| ds  + \sup_{- \max_k(\Delta_k) \leq r \leq 0} \left| Q_i^{\eta}(r) - q_i(r)  \right| \cdot t  \right) + \frac{4 \epsilon}{5}.
\end{align*}

\noindent Since we assumed that $Q_i^{\eta}(s) \to \phi_i(s)$ almost surely for all $s \in [- \max_k(\Delta_k), 0]$ and for each $1 \leq i \leq N$ and that $q_i(s) = \phi_i(s)$ for all $s \in [- \max_k(\Delta_k), 0]$ and $1 \leq i \leq N$, then if $\eta^*$ is sufficiently large so that we also have that $$\sup_{- \max_k(\Delta_k) \leq r \leq 0} < \frac{\epsilon}{5 C t},$$ then we have the following bound almost surely $$\left| Q_i^{\eta}(t) - q_i(t)  \right| \leq C \int_0^t \sup_{0 \leq r \leq s} \left| Q_i^{\eta}(r) - q_i(r)  \right|  ds + \epsilon. $$ Using Gronwall's lemma in \cite{hale}, we have that $$\sup_{0 \leq t \leq T} \left| Q_i^{\eta}(t) - q_i(t)  \right| \leq \epsilon \cdot e^{CT}$$ and since $\epsilon > 0$ is arbitrarily small, we can let it approach zero. This proves the fluid limit.

\end{proof}

\subsection{Scatterplots from Larger Regions}

Earlier we made note of the arbitrariness of the definition of accuracy being used. In this section, we show some examples of scatterplots obtained from considering points $(\Delta_1, \Delta_2)$ in regions other than the unit square. In Figure \ref{Fig16} and Figure \ref{Fig17} we show scatterplots from sampling points from a two-by-two square as well as those sampling from a five-by-five square, respectively. It is easy to see that as the square that we sample $(\Delta_1, \Delta_2)$ pairs from gets larger, the percentage of points in the neutral approximation scatterplot that match those in the corresponding multi-delay scatterplot increases whereas the percentage of points in the second-derivative approximation scatterplot that match those in the multi-delay scatterplot decreases. While this may initially make the second-derivative approximation appear less promising than it did in the unit-square setting, it is worth noting that the multi-delay scatterplot seems to have a relatively simple structure for large values of $\Delta_1$ or $\Delta_2$ as the red-green border appears to be easy to approximate with either a horizontal or vertical line for sufficiently large delays. Thus, the fact that much of the nonlinearity of the red-green border occurs in the unit square is reassuring regarding the meaningfulness of the accuracy statistics collected earlier. 


\begin{figure}[!htb]
     \begin{subfigure}[b]{0.3\textwidth}
         \centering
       \includegraphics[width=\textwidth]{./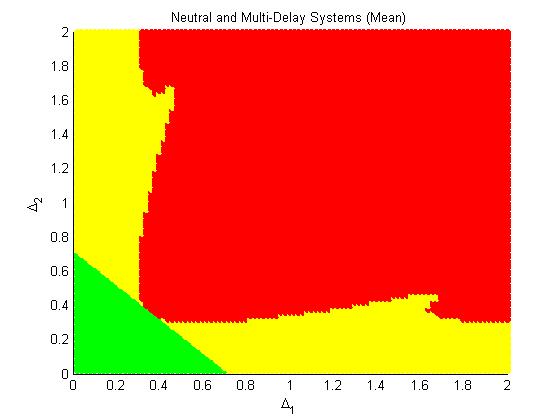}
         \caption{Neutral, $p = .5$}
     \end{subfigure}
     ~\hspace{-.01in}~
     \begin{subfigure}[b]{0.3\textwidth}
         \centering
         \includegraphics[width=\textwidth]{./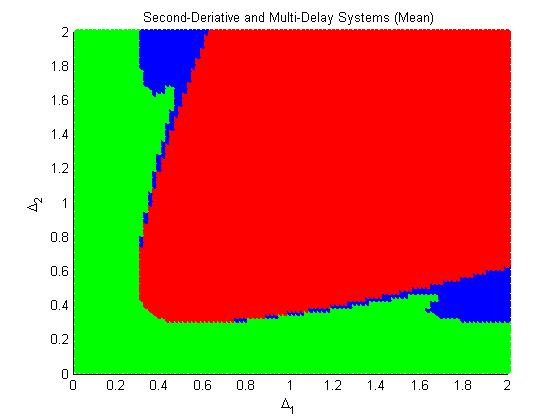}
          \centering
        \caption{$2^{nd}$-Derivative, $p = .5$}
     \end{subfigure}
     ~\hspace{-.01in}~
     \begin{subfigure}[b]{0.3\textwidth}
         \centering
         \includegraphics[width=\textwidth]{./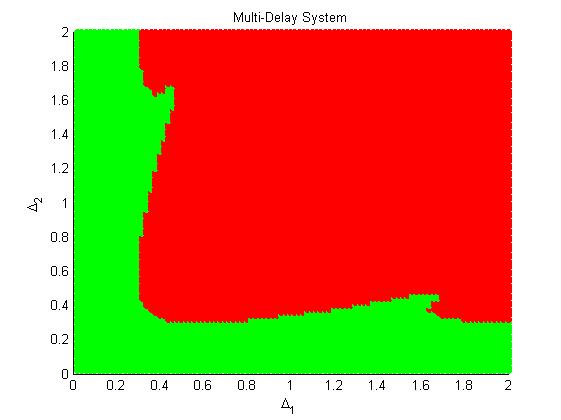}
         \caption{Multi-Delay, $p = .5$}
     \end{subfigure}
\caption{Scatterplots for the neutral and second-derivative approximations with $\Delta^* = \mathbb{E}[\Delta]$ as well as a scatterplot for the actual two-delay system, all sampling $(\Delta_1, \Delta_2)$ pairs from a \textbf{two-by-two} square and with $p = .5, C = -5, \alpha_0 = -1$. The analogously-defined accuracy for the neutral approximation is about $74.62 \%$ and for the second-derivative approximation it is about $94.54 \%$.}
\label{Fig16}
\end{figure}
\begin{figure}[!htb]
     \begin{subfigure}[b]{0.3\textwidth}
         \centering
       \includegraphics[width=\textwidth]{./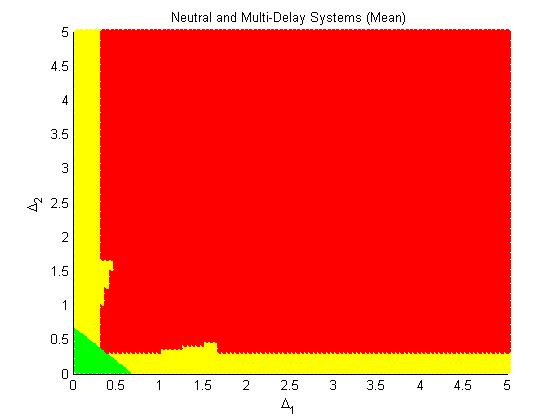}
         \caption{Neutral, $p = .5$}
     \end{subfigure}
     ~\hspace{-.01in}~
     \begin{subfigure}[b]{0.3\textwidth}
         \centering
         \includegraphics[width=\textwidth]{./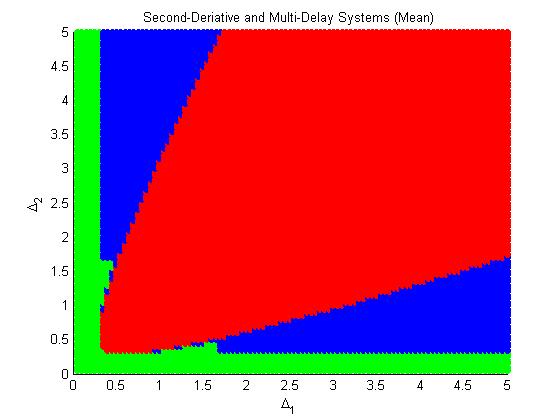}
          \centering
        \caption{$2^{nd}$-Derivative, $p = .5$}
     \end{subfigure}
     ~\hspace{-.01in}~
     \begin{subfigure}[b]{0.3\textwidth}
         \centering
         \includegraphics[width=\textwidth]{./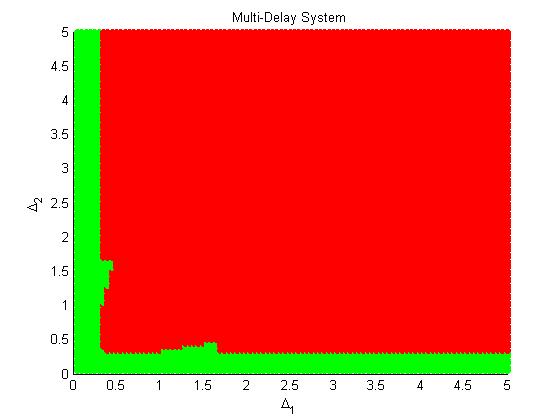}
         \caption{Multi-Delay, $p = .5$}
     \end{subfigure}
\caption{Scatterplots for the neutral and second-derivative approximations with $\Delta^* = \mathbb{E}[\Delta]$ as well as a scatterplot for the actual two-delay system, all sampling $(\Delta_1, \Delta_2)$ pairs from a \textbf{five-by-five square} and with $p = .5, C = -5, \alpha_0 = -1$. The analogously-defined accuracy for the neutral approximation is about $88.78 \%$ and for the second-derivative approximation it is about $79.10 \%$.}
\label{Fig17}
\end{figure}

\bibliography{neutral_arxiv}

\begin{thebibliography}{34}
\providecommand{\natexlab}[1]{#1}
\providecommand{\url}[1]{\texttt{#1}}
\expandafter\ifx\csname urlstyle\endcsname\relax
  \providecommand{\doi}[1]{doi: #1}\else
  \providecommand{\doi}{doi: \begingroup \urlstyle{rm}\Url}\fi

\bibitem[Atar and Lipshutz(2020)]{atar2020heavy}
Rami Atar and David Lipshutz.
\newblock Heavy traffic limits for join-the-shortest-estimated-queue policy
  using delayed information.
\newblock \emph{Mathematics of Operations Research}, 2020.

\bibitem[B{\'e}lair and Campbell(1994)]{belair1994stability}
Jacques B{\'e}lair and Sue~Ann Campbell.
\newblock Stability and bifurcations of equilibria in a multiple-delayed
  differential equation.
\newblock \emph{SIAM Journal on Applied Mathematics}, 54\penalty0 (5):\penalty0
  1402--1424, 1994.

\bibitem[Bernard et~al.(2001)Bernard, B{\'e}lair, and
  Mackey]{bernard2001sufficient}
Samuel Bernard, Jacques B{\'e}lair, and Michael~C Mackey.
\newblock Sufficient conditions for stability of linear differential equations
  with distributed delay.
\newblock \emph{Discrete \& Continuous Dynamical Systems-B}, 1\penalty0
  (2):\penalty0 233, 2001.

\bibitem[Campbell(2007)]{campbell2007time}
Sue~Ann Campbell.
\newblock Time delays in neural systems.
\newblock In \emph{Handbook of brain connectivity}, pages 65--90. Springer,
  2007.

\bibitem[Campbell et~al.(1999)Campbell, Ruan, and Wei]{campbell1999qualitative}
Sue~Ann Campbell, Shigui Ruan, and Junjie Wei.
\newblock Qualitative analysis of a neural network model with multiple time
  delays.
\newblock \emph{International Journal of Bifurcation and Chaos}, 9\penalty0
  (08):\penalty0 1585--1595, 1999.

\bibitem[Campbell et~al.(2006)Campbell, Ncube, and
  Wu]{campbell2006multistability}
Sue~Ann Campbell, I~Ncube, and J~Wu.
\newblock Multistability and stable asynchronous periodic oscillations in a
  multiple-delayed neural system.
\newblock \emph{Physica D: Nonlinear Phenomena}, 214\penalty0 (2):\penalty0
  101--119, 2006.

\bibitem[Cooke and Van Den~Driessche(1996)]{cooke1996analysis}
Kenneth~L Cooke and P~Van Den~Driessche.
\newblock Analysis of an seirs epidemic model with two delays.
\newblock \emph{Journal of Mathematical Biology}, 35\penalty0 (2):\penalty0
  240--260, 1996.

\bibitem[Doldo et~al.(2020)Doldo, Pender, and Rand]{doldo2020breaking}
Philip Doldo, Jamol Pender, and Richard Rand.
\newblock Breaking the symmetry in queues with delayed information.
\newblock \emph{arXiv preprint arXiv:2004.06640}, 2020.

\bibitem[Dong et~al.(2019)Dong, Yom-Tov, and Yom-Tov]{dong2019impact}
Jing Dong, Elad Yom-Tov, and Galit~B Yom-Tov.
\newblock The impact of delay announcements on hospital network coordination
  and waiting times.
\newblock \emph{Management Science}, 65\penalty0 (5):\penalty0 1969--1994,
  2019.

\bibitem[Freedman and Rao(1986)]{freedman1986stability}
HI~Freedman and V~Sree~Hari Rao.
\newblock Stability criteria for a system involving two time delays.
\newblock \emph{SIAM Journal on Applied Mathematics}, 46\penalty0 (4):\penalty0
  552--560, 1986.

\bibitem[Guo et~al.(2008)Guo, Chen, and Wu]{guo2008two}
Shangjiang Guo, Yuming Chen, and Jianhong Wu.
\newblock Two-parameter bifurcations in a network of two neurons with multiple
  delays.
\newblock \emph{Journal of differential equations}, 244\penalty0 (2):\penalty0
  444--486, 2008.

\bibitem[Hale(1969)]{hale}
Jack~K Hale.
\newblock \emph{Ordinary Differential Equations}.
\newblock Pure and Applied Mathematics, 1969.

\bibitem[Kurtz(1978)]{kurtz1978strong}
Thomas~G Kurtz.
\newblock Strong approximation theorems for density dependent markov chains.
\newblock \emph{Stochastic Processes and their Applications}, 6\penalty0
  (3):\penalty0 223--240, 1978.

\bibitem[Liu and Whitt(2012)]{liu2012many}
Yunan Liu and Ward Whitt.
\newblock A many-server fluid limit for the gt/gi/st+ gi queueing model
  experiencing periods of overloading.
\newblock \emph{Operations Research Letters}, 40\penalty0 (5):\penalty0
  307--312, 2012.

\bibitem[Llibre and Tar{\c{t}}a(2006)]{llibre2006periodic}
Jaume Llibre and Alexandrina-Alina Tar{\c{t}}a.
\newblock Periodic solutions of delay equations with three delays via
  bi-hamiltonian systems.
\newblock \emph{Nonlinear Analysis: Theory, Methods \& Applications},
  64\penalty0 (11):\penalty0 2433--2441, 2006.

\bibitem[Lücken et~al.(2015)Lücken, Pade, and
  Knauer]{lucken2015classification}
L~Lücken, Jan~Philipp Pade, and Kolja Knauer.
\newblock Classification of coupled dynamical systems with multiple delays:
  Finding the minimal number of delays.
\newblock \emph{SIAM Journal on Applied Dynamical Systems}, 14\penalty0
  (1):\penalty0 286--304, 2015.

\bibitem[Mackey(1989)]{MACKEY1989497}
Michael~C Mackey.
\newblock Commodity price fluctuations: Price dependent delays and
  nonlinearities as explanatory factors.
\newblock \emph{Journal of Economic Theory}, 48\penalty0 (2):\penalty0 497 --
  509, 1989.
\newblock ISSN 0022-0531.
\newblock \doi{https://doi.org/10.1016/0022-0531(89)90039-2}.
\newblock URL
  \url{http://www.sciencedirect.com/science/article/pii/0022053189900392}.

\bibitem[Mez{\H{o}} and Baricz(2017)]{mezHo2017generalization}
Istv{\'a}n Mez{\H{o}} and {\'A}rp{\'a}d Baricz.
\newblock On the generalization of the {L}ambert {W} function.
\newblock \emph{Transactions of the American Mathematical Society},
  369\penalty0 (11):\penalty0 7917--7934, 2017.

\bibitem[Nirenberg et~al.(2018)Nirenberg, Daw, and Pender]{nirenberg2018impact}
Samantha Nirenberg, Andrew Daw, and Jamol Pender.
\newblock The impact of queue length rounding and delayed app information on
  disney world queues.
\newblock In \emph{2018 Winter Simulation Conference (WSC)}, pages 3849--3860.
  IEEE, 2018.

\bibitem[Novitzky and Pender(2020)]{novitzky2020queues}
Sophia Novitzky and Jamol Pender.
\newblock Queues with delayed information: a probabilistic perspective.
\newblock \emph{Cornell University, Ithaca NY}, 14853, 2020.

\bibitem[Novitzky et~al.(2019)Novitzky, Pender, Rand, and
  Wesson]{novitzky2019nonlinear}
Sophia Novitzky, Jamol Pender, Richard~H Rand, and Elizabeth Wesson.
\newblock Nonlinear dynamics in queueing theory: Determining the size of
  oscillations in queues with delay.
\newblock \emph{SIAM Journal on Applied Dynamical Systems}, 18\penalty0
  (1):\penalty0 279--311, 2019.

\bibitem[Novitzky et~al.(2020)Novitzky, Pender, Rand, and
  Wesson]{novitzky2020limiting}
Sophia Novitzky, Jamol Pender, Richard~H Rand, and Elizabeth Wesson.
\newblock Limiting the oscillations in queues with delayed information through
  a novel type of delay announcement.
\newblock \emph{Queueing Systems}, 95\penalty0 (3):\penalty0 281--330, 2020.

\bibitem[Nussbaum(1978)]{nussbaum1978differential}
Roger~D Nussbaum.
\newblock \emph{Differential-delay equations with two time lags}, volume 205.
\newblock American Mathematical Soc., 1978.

\bibitem[Pender et~al.(2017)Pender, Rand, and Wesson]{pender2017queues}
Jamol Pender, Richard~H Rand, and Elizabeth Wesson.
\newblock Queues with choice via delay differential equations.
\newblock \emph{International Journal of Bifurcation and Chaos}, 27\penalty0
  (04):\penalty0 1730016, 2017.

\bibitem[Pender et~al.(2018)Pender, Rand, and Wesson]{pender2018analysis}
Jamol Pender, Richard~H Rand, and Elizabeth Wesson.
\newblock An analysis of queues with delayed information and time-varying
  arrival rates.
\newblock \emph{Nonlinear Dynamics}, 91\penalty0 (4):\penalty0 2411--2427,
  2018.

\bibitem[Pender et~al.(2020)Pender, Rand, and Wesson]{pender2020stochastic}
Jamol Pender, Richard Rand, and Elizabeth Wesson.
\newblock A stochastic analysis of queues with customer choice and delayed
  information.
\newblock \emph{Mathematics of Operations Research}, 2020.

\bibitem[Penkovsky et~al.(2019)Penkovsky, Porte, Jacquot, Larger, and
  Brunner]{penkovsky2019coupled}
Bogdan Penkovsky, Xavier Porte, Maxime Jacquot, Laurent Larger, and Daniel
  Brunner.
\newblock Coupled nonlinear delay systems as deep convolutional neural
  networks.
\newblock \emph{Physical review letters}, 123\penalty0 (5):\penalty0 054101,
  2019.

\bibitem[Rackauckas et~al.(2020)Rackauckas, Ma, Martensen, Warner, Zubov,
  Supekar, Skinner, and Ramadhan]{rackauckas2020universal}
Christopher Rackauckas, Yingbo Ma, Julius Martensen, Collin Warner, Kirill
  Zubov, Rohit Supekar, Dominic Skinner, and Ali Ramadhan.
\newblock Universal differential equations for scientific machine learning.
\newblock \emph{arXiv preprint arXiv:2001.04385}, 2020.

\bibitem[Ruan and Wei(2003)]{ruan2003zeros}
Shigui Ruan and Junjie Wei.
\newblock On the zeros of transcendental functions with applications to
  stability of delay differential equations with two delays.
\newblock \emph{Dynamics of Continuous Discrete and Impulsive Systems Series
  A}, 10:\penalty0 863--874, 2003.

\bibitem[Shayer and Campbell(2000)]{shayer2000stability}
Leslie~P Shayer and Sue~Ann Campbell.
\newblock Stability, bifurcation, and multistability in a system of two coupled
  neurons with multiple time delays.
\newblock \emph{SIAM Journal on Applied Mathematics}, 61\penalty0 (2):\penalty0
  673--700, 2000.

\bibitem[Tao and Pender(2017)]{tao2017stochastic}
Shuang Tao and Jamol Pender.
\newblock A stochastic analysis of bike sharing systems.
\newblock \emph{Available at SSRN 3026324}, 2017.

\bibitem[Wei and Ruan(1999)]{wei1999stability}
Junjie Wei and Shigui Ruan.
\newblock Stability and bifurcation in a neural network model with two delays.
\newblock \emph{Physica D: Nonlinear Phenomena}, 130\penalty0 (3-4):\penalty0
  255--272, 1999.

\bibitem[Wu(2011)]{wu2011introduction}
Jianhong Wu.
\newblock \emph{Introduction to neural dynamics and signal transmission delay},
  volume~6.
\newblock Walter de Gruyter, 2011.

\bibitem[Yoneyama and Sugie(1988)]{yoneyama1988stability}
Toshiaki Yoneyama and Jitsuro Sugie.
\newblock On the stability region of differential equations with two delays.
\newblock \emph{Funkcial. Ekvac}, 31\penalty0 (2):\penalty0 233--240, 1988.

\end{thebibliography}

\bibliographystyle{plainnat}

\end{document}